\newtheorem{theorem}{Theorem}[section]
\newtheorem{lemma}[theorem]{Lemma}
\newtheorem{proposition}[theorem]{Proposition}
\newtheorem{corollary}[theorem]{Corollary}
\theoremstyle{definition} 
\newtheorem{definition}[theorem]{Definition}
\newtheorem{remark}[theorem]{Remark}
\newtheorem{definition-theorem}[theorem]{Definition-Theorem}
\newtheorem{definition-lemma}[theorem]{Definition-Lemma}
\numberwithin{equation}{section}
\newcommand{\C}{\mathbb{C}}
\newcommand{\R}{\mathbb{R}}
\newcommand{\Z}{\mathbb{Z}}
\newcommand{\Q}{\mathbb{Q}}
\newcommand{\mc}{\mathcal}
\newcommand{\mf}{\mathfrak}
\DeclareMathOperator{\ord}{ord}
\def\div{\operatorname{div}}
\DeclareMathOperator{\NE}{NE}
\DeclareMathOperator{\Val}{Val}
\DeclareMathOperator{\QM}{QM}
\DeclareMathOperator{\Spec}{Spec}
\DeclareMathOperator{\Supp}{Supp}
\DeclareMathOperator{\lct}{lct}
\def\mult{\operatorname{mult}}
\newcommand{\stkout}[1]{\ifmmode\text{\sout{\ensuremath{#1}}}\else\sout{#1}\fi}
\DeclarePairedDelimiterX{\norm}[1]{\lVert}{\rVert}{#1}
\newcommand{\floor}[1]{\left\lfloor #1 \right\rfloor}
\newcommand{\ceil}[1]{\left\lceil #1 \right \rceil}
\let\oldframe\frame
\renewcommand\frame[1][allowframebreaks]{\oldframe[#1]}
\title[A valuative approach to the anticanonical minimal model program]{A valuative approach to the anticanonical minimal model program}
\begin{document}

\author[S.~Choi]{Sung Rak Choi}
\author[S.~Jang]{Sungwook Jang}
\author[D.~Kim]{Donghyeon Kim}
\author[D.-W.~Lee]{Dae-Won Lee}
\address[Sung Rak Choi]{Department of Mathematics, Yonsei University, 50 Yonsei-ro, Seodaemun-gu, Seoul 03722, Republic of Korea}
\email{sungrakc@yonsei.ac.kr}
\address[Sungwook Jang]{Center for Complex Geometry, Institute for Basic Science, 34126 Daejeon, Republic of Korea}
\email{swjang@ibs.re.kr}
\address[Donghyeon Kim]{Department of Mathematics, Yonsei University, 50 Yonsei-ro, Seodaemun-gu, Seoul 03722, Republic of Korea}
\email{whatisthat@yonsei.ac.kr}
\address[Dae-Won Lee]{Department of Mathematics, Ewha Womans University, 52 Ewhayeodae-gil, Seodaemun-gu, Seoul 03760, Republic of Korea}
\email{daewonlee@ewha.ac.kr}


\date{\today}
\subjclass[2010]{14B05, 14E05, 14E30, 14F18}
\keywords{}

\begin{abstract}
In this paper, we show that the log canonical threshold of a potentially klt triple can be computed by a quasi-monomial valuation. The notion of potential triples provides a larger and more flexible framework to work with than that of generalized pairs. Our main result can be considered as an extension to the result of Xu on klt pairs. As an application of the main result, we show that we can run the MMP on any potentially klt triples and $-(K_X+\Delta)$-MMP on the potentially klt pairs.
\end{abstract}

\maketitle
\tableofcontents
\allowdisplaybreaks

\section{Introduction}\label{sect:intro}

Throughout the paper, we work over an algebraically closed field of characteristic zero. It is widely acknowledged that a thorough understanding of the geometry of singularities in algebraic varieties has played a crucial role in the development of the minimal model program; conversely, advances in the minimal model program have also substantially deepened our understanding of the singularities.

The log canonical threshold is a fundamental invariant in the theory of singularities in birational geometry. It is defined as a means to measure the singularities of pairs and it is well known that the behaviors of log canonical thresholds have deep connections to the birational geometry of algebraic varieties. For a pair $(X,\Delta)$ and a graded sequence of ideals $\mathfrak{a}_\bullet$, the log canonical threshold $\lct(X,\Delta,\mathfrak{a}_\bullet)$ is defined as
$$
\lct(X,\Delta,\mathfrak{a}_\bullet)=\inf_{\nu\in\Val^*_X}\frac{A_{X,\Delta}(\nu)}{\nu(\mathfrak{a}_\bullet)}
$$
where $\inf$ is taken over all the non-trivial valuations $\nu$ on $X$.

It is believed that  $\lct(X,\Delta,\mathfrak{a}_\bullet)$ is computed by some quasi-monomial valuation $\omega\in\Val^*_X$ (Weak Conjecture) and furthermore  $\lct(X,\Delta,\mathfrak{a}_\bullet)$ can be computed only  by quasi-monomial valuations (Strong Conjecture). Xu in \cite{Xu2} proves that Weak Conjecture holds if $(X,\Delta)$ is a klt pair and $\mathfrak{a}_\bullet$ is a graded ideal such that $\lct(X,\Delta,\mathfrak{a}_\bullet)<\infty$. See \cite{JM} for more details and background on Weak/Strong Conjectures.

In this paper, we prove the computability of the log canonical thresholds by quasi-monomial valuations for a wider class of objects which we call \textit{potential triples}. A potential triple $(X,\Delta,D)$ consists of a pair $(X,\Delta)$ and a pseudoeffective $\R$-Cartier divisor $D$ on $X$. Such a triple is said to be \emph{potentially klt (pklt)} if $\inf_E\{A_{X,\Delta}(E)-\sigma_E(D)\}>0$ holds where $\inf$ is taken over all prime divisors $E$ over $X$. For the definitions of the log discrepancy $a(E,X,\Delta)$ and the asymptotic divisorial valuation $\sigma_E(D)$, see Section \ref{sect:prelim}. These notions provide a framework which allow us to consider further generalizations of the notions and results for pairs and generalized pairs by Birkar-Zhang \cite{BZ}. Indeed, many of the properties satisfied by (generalized) pairs still hold analogously for potential triples although the notion of potential triples are more general than that of generalized pairs (cf. \cite{CP},\cite{CJ},\cite{CJL2}).
See Section \ref{sect:prelim} for precise definitions on potential triples and related notions.

The following is the first main theorem of the paper. The log canonical threshold of a potential triple $(X,\Delta,D)$ is defined as
$$
\lct_{\sigma}(X,\Delta,D)=\inf_{\nu\in\Val^*_X}\frac{A_{X,\Delta}(\nu)}{\sigma_{\nu}(D)}
$$
where $\inf$ is taken over all non-trivial valuations $\nu$ on $X$.
For the definition of asymptotic valuation $\nu_{\sigma}(D)$, see Subsection \ref{subsec:asympt val}.

\begin{theorem} \label{thrm:1}
Let $(X,\Delta,D)$ be a pklt triple. Then there exists a quasi-monomial valuation $\omega$ over $X$ which computes $\lct_{\sigma}(X,\Delta,D)$, i.e., $\lct_{\sigma}(X,\Delta,D)=\frac{A_{X,\Delta}(\omega)}{\sigma_{\omega}(D)}$
\end{theorem}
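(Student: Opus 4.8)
\emph{Strategy.} The plan is to reduce Theorem~\ref{thrm:1} to the theorem of Xu~\cite{Xu2} stating that the log canonical threshold of a graded sequence of ideals on a klt pair is computed by a quasi-monomial valuation; the bridge is the fact that $\sigma_\nu(D)$ is governed by such a graded sequence. We may assume $\lct_\sigma(X,\Delta,D)<\infty$, since otherwise $\sigma_\nu(D)=0$ for every non-trivial valuation of finite log discrepancy and any quasi-monomial valuation computes the threshold. Because $(X,\Delta,D)$ is pklt we have $A_{X,\Delta}(E)>\sigma_E(D)\ge 0$ for every prime divisor $E$ over $X$, so $(X,\Delta)$ is klt, and one readily checks that $\lct_\sigma(X,\Delta,D)>0$, so the rescalings of valuations used below do no harm. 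Suppose first that $D$ is big (and, after a standard approximation inside the big cone, that $D$ is a $\mathbb{Q}$-divisor). Then for every $\nu\in\Val^*_X$ one has $\sigma_\nu(D)=\nu(\mathfrak{a}_\bullet)$, where $\mathfrak{a}_m=\mathfrak{b}(|mD|)$ is the sequence of base ideals of the multiples of $D$; hence $\lct_\sigma(X,\Delta,D)=\lct(X,\Delta;\mathfrak{a}_\bullet)<\infty$, and applying~\cite{Xu2} to the klt pair $(X,\Delta)$ and $\mathfrak{a}_\bullet$ produces a quasi-monomial valuation $\omega$ over $X$ with $A_{X,\Delta}(\omega)/\omega(\mathfrak{a}_\bullet)=\lct(X,\Delta;\mathfrak{a}_\bullet)$. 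Since $\omega(\mathfrak{a}_\bullet)=\sigma_\omega(D)$, this $\omega$ computes $\lct_\sigma(X,\Delta,D)$.

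\emph{From the general case to the big case.} Fix an ample $\mathbb{Q}$-divisor $H$. For each $\varepsilon>0$ the divisor $D+\varepsilon H$ is big and $(X,\Delta,D+\varepsilon H)$ is again pklt, while subadditivity of $\sigma_\nu$ together with $\sigma_\nu(\varepsilon H)=0$ shows that $\varepsilon\mapsto\sigma_\nu(D+\varepsilon H)$ increases to $\sigma_\nu(D)$ as $\varepsilon\downarrow 0$. Interchanging infima then yields
\[
\lct_\sigma(X,\Delta,D)=\inf_{\varepsilon>0}\lct_\sigma(X,\Delta,D+\varepsilon H)=\lim_{\varepsilon\downarrow 0}\lct_\sigma(X,\Delta,D+\varepsilon H),
\]
a monotone limit. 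By the big case, for each small $\varepsilon$ there is a quasi-monomial valuation $\omega_\varepsilon$ computing $\lct_\sigma(X,\Delta,D+\varepsilon H)$; rescaling so that $\sigma_{\omega_\varepsilon}(D+\varepsilon H)=1$, the log discrepancies $A_{X,\Delta}(\omega_\varepsilon)=\lct_\sigma(X,\Delta,D+\varepsilon H)$ decrease to $\lct_\sigma(X,\Delta,D)$ and are in particular bounded. One would then extract a limit valuation $\omega$; lower semicontinuity of $\nu\mapsto A_{X,\Delta}(\nu)$ and upper semicontinuity of $\nu\mapsto\sigma_\nu(D)$ give $A_{X,\Delta}(\omega)\le\lct_\sigma(X,\Delta,D)$ and $\sigma_\omega(D)\ge 1$, whence $\omega$ computes $\lct_\sigma(X,\Delta,D)$.

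\emph{Main obstacle.} The real difficulty is the last step, namely guaranteeing that the limit $\omega$ is again \emph{quasi-monomial}. Since the set of quasi-monomial valuations is not closed under limits, this forces us to keep the log smooth models and the dual complexes on which the $\omega_\varepsilon$ are monomial within a bounded family as $\varepsilon\downarrow 0$. The pklt hypothesis is precisely what makes this possible: it confines the centers of the $\omega_\varepsilon$ to the fixed proper closed subset $\Bm(D)$ and, via the uniform log discrepancy bound, to a compact family of valuations, and the boundedness of the relevant birational models should follow from the structure theory of pklt triples developed in Section~\ref{sect:prelim} (and in~\cite{CJL2}), in the spirit of the way boundedness of complements enters Xu's proof. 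Once this boundedness is secured, the reduction to~\cite{Xu2} is formal, so this extraction of $\omega$ is the heart of the argument.
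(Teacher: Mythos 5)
Your reduction of the big case to Xu's theorem is fine (for big $D$ one has $\sigma_\nu(D)=\nu(\mathfrak a_\bullet)$ for the base ideals $\mathfrak a_m=\mathfrak b(|mD|)$, and $(X,\Delta)$ is klt), and your approximation $\lct_\sigma(X,\Delta,D)=\lim_{\varepsilon\downarrow0}\lct_\sigma(X,\Delta,D+\varepsilon H)$ matches Lemma \ref{lem:limit}. But the passage from the big case to the general case --- which you yourself label the ``main obstacle'' --- is exactly where the content of the theorem lies, and your sketch of it contains two concrete gaps. First, the limit-extraction step invokes ``upper semicontinuity of $\nu\mapsto\sigma_\nu(D)$''; for a merely pseudoeffective $D$ this function is $\sup_{\varepsilon>0}\nu(\|D+\varepsilon A\|)$, a supremum of upper semicontinuous functions, and is in general neither upper semicontinuous nor continuous on $\mathrm{Val}_X\cap\{A_{X,\Delta}<\infty\}$. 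The paper explicitly identifies this failure as the central difficulty and circumvents it by forcing all approximating valuations (after restriction over a field extension $K=\overline{K(V)}$) into a single dual complex $\mathcal D(E'_K)$ of a fixed log-smooth model, where Proposition \ref{prop:continuity} (via [BFJ, Theorem B]) gives Lipschitz continuity; getting such a fixed model requires the Koll\'ar-component approximation, boundedness of complements, the finite-type family of divisors, and Lemma \ref{lem:ensuring passing to a subsequence}. Second, the compactness needed to extract a limit valuation is not available from a bound on $A_{X,\Delta}$ alone: the paper must first arrange that all approximating valuations have one and the same center $x$ (the generic point of a component of $\mathcal Z(\mathcal J_\sigma(X,\Delta,\lct_\sigma(X,\Delta,D)\|D\|))$) together with a uniform bound $\nu(\mathfrak m_x)\ge\delta$, and this is precisely what the diminished multiplier ideal machinery and Proposition \ref{prop:enlarged ideal for lct} deliver; observing that centers lie in $\mathbf B_-(D)$ is far from sufficient, and ``boundedness of the relevant birational models should follow from the structure theory of pklt triples'' is an assertion, not an argument.

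Note also that the paper never proves that the limit valuation is quasi-monomial; that issue is sidestepped rather than solved. In Step 4 one shows that the limit $\nu_0$ computes $\lct(X,\Delta,\mathfrak a_\bullet(\nu_0))$ for its own valuation ideals, then uses Xu's results (Remark \ref{rmk:xu main result}) to replace $\nu_0$ by a quasi-monomial $\omega\ge\nu_0$ with $A_{X,\Delta}(\omega)=A_{X,\Delta}(\nu_0)$, and concludes by the monotonicity $\sigma_\omega(D)\ge\sigma_{\nu_0}(D)$. So even granting the boundedness you postulate, the remaining reduction to \cite{Xu2} is not ``formal'': it needs this extra argument. As it stands, your proposal is a correct high-level outline that locates the difficulty but does not overcome it, so it does not constitute a proof.
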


We note that Theorem \ref{thrm:1} gives a partial positive answer to \cite[Question 6.15]{Leh}.
Using Theorem \ref{thrm:1}, we can prove that $(X,\Delta,(1+\epsilon)D)$ is also pklt for a sufficiently small $\epsilon>0$ (Proposition \ref{prop:perturbation}). In turn, this perturbation gives us the following useful application.

\begin{theorem}\label{thrm:2}
Let $(X,\Delta,D)$ be a pklt triple. Then for any $0< \varepsilon\ll 1$, we can run the $(K_X+\Delta+(1+\varepsilon)D)$-MMP with scaling of an ample divisor.
\end{theorem}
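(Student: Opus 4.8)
The plan is to reduce the running of the $(K_X+\Delta+(1+\varepsilon)D)$-MMP to the already developed machinery for pklt triples, using Theorem \ref{thrm:1} as the key input that makes the perturbation available. First I would invoke Theorem \ref{thrm:1} to produce a quasi-monomial valuation $\omega$ computing $\lct_\sigma(X,\Delta,D)$; since $(X,\Delta,D)$ is pklt we have $\lct_\sigma(X,\Delta,D)>1$, so there is room to perturb. This is exactly the content of Proposition \ref{prop:perturbation}: for $0<\varepsilon\ll 1$ the triple $(X,\Delta,(1+\varepsilon)D)$ remains pklt, because the defect $\inf_E\{A_{X,\Delta}(E)-\sigma_E((1+\varepsilon)D)\} = \inf_E\{A_{X,\Delta}(E)-(1+\varepsilon)\sigma_E(D)\}$ stays positive once $\varepsilon$ is small, the point being that the computing valuation $\omega$ controls the infimum so that it does not degenerate to zero under a uniform scaling. (Here I am using that $\sigma_E$ is homogeneous in $D$ and that $D$ is pseudoeffective, so $(1+\varepsilon)D$ is again pseudoeffective and the triple $(X,\Delta,(1+\varepsilon)D)$ makes sense.)

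Next I would feed the pklt triple $(X,\Delta,(1+\varepsilon)D)$ into the MMP for potential triples. By the discussion in the introduction and the references \cite{CP},\cite{CJ},\cite{CJL2}, a pklt triple admits a $(K_X+\Delta+(1+\varepsilon)D)$-MMP with scaling of an ample divisor: one runs the MMP directed by $K_X+\Delta+(1+\varepsilon)D$, and at each step the pklt property is preserved by the sequence of extremal contractions and flips, while the asymptotic valuations $\sigma_E$ transform compatibly with the birational maps. The termination with scaling follows from the same argument that gives termination for the MMP on pklt triples in general (which is established prior to, or alongside, this statement). Thus each step of the proof is: (i) perturb via Theorem \ref{thrm:1}/Proposition \ref{prop:perturbation} to get a pklt triple whose boundary now includes the perturbed divisor; (ii) apply the general MMP-for-pklt-triples result to this new triple; (iii) read off the conclusion for the original $D$.

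The main obstacle I anticipate is ensuring that the perturbed triple $(X,\Delta,(1+\varepsilon)D)$ genuinely satisfies all the hypotheses needed to run the MMP with scaling, not merely the pklt inequality: one needs the relevant finiteness/boundedness inputs (e.g. that the pair part $(X,\Delta)$ together with the perturbed divisor behaves well enough for the cone and contraction theorems to apply in the potential setting), and one must check that scaling an \emph{ample} divisor still works when the directing divisor is $K_X+\Delta+(1+\varepsilon)D$ rather than $K_X+\Delta+D$ — i.e. that the nef threshold computations are unaffected by the perturbation. I expect this to be handled by choosing $\varepsilon$ rational (or by a limiting argument over rational $\varepsilon$) and by appealing to the fact, recorded in the cited works on potential triples, that the MMP for a pklt triple exists and terminates; the perturbation only serves to make the directing class slightly "more positive" relative to $D$, which is harmless. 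A secondary subtlety is the uniformity in $\varepsilon$: one should make sure the same $\varepsilon$ (or range of $\varepsilon$) works throughout the MMP, which again follows because pklt-ness of $(X,\Delta,(1+\varepsilon)D)$ is preserved under the MMP steps once it holds at the start.
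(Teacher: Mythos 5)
Your first step (use Theorem \ref{thrm:1} via Proposition \ref{prop:perturbation} to make $(X,\Delta,(1+\varepsilon)D)$ pklt for $0<\varepsilon\ll1$) matches the paper. The genuine gap is your step (ii): you ``apply the general MMP-for-pklt-triples result to this new triple,'' citing \cite{CP},\cite{CJ},\cite{CJL2} and the introduction. No such prior result exists — the existence of the MMP for pklt triples \emph{is} the content of Theorem \ref{thrm:2}, so your argument is circular. Since $(1+\varepsilon)D$ is only pseudoeffective (not a boundary, not nef, not assumed to admit a Zariski decomposition), there is no cone theorem, contraction theorem, or existence of flips available for the class $K_X+\Delta+(1+\varepsilon)D$ as it stands; Proposition \ref{prop:pot discr} (from \cite{CJL2}) explicitly \emph{assumes} the divisorial contraction or flip exists and only records monotonicity of potential discrepancies, so it cannot supply the steps. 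You also invoke ``termination with scaling,'' but termination is neither claimed nor known here (the paper notes the MMP may become an infinite sequence of flips); what must be produced is each individual step. A further small inaccuracy: pklt-ness of $(X,\Delta,D)$ does not immediately give $\lct_\sigma(X,\Delta,D)>1$, since $A_{X,\Delta}(E)-\sigma_E(D)\ge\alpha>0$ is compatible with $A_{X,\Delta}(E)/\sigma_E(D)\to1$; ruling this out is exactly what Proposition \ref{prop:perturbation} does using the quasi-monomial minimizer from Theorem \ref{thrm:1}.

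The paper fills this gap by converting each step into a step of an honest klt pair MMP. Choose $B$ sufficiently ample; for $0<\varepsilon<\varepsilon_0$ and rational $0<\varepsilon'<1$ the triple $(X,\Delta,(1+\varepsilon)D+\varepsilon'B)$ is still pklt (Lemma \ref{lem:pklt to plc}), and now the divisor $(1+\varepsilon)D+\varepsilon'B$ is \emph{big}, so Theorem \ref{thm:compl} produces $\Delta_{\varepsilon,\varepsilon'}\sim_{\Q}(1+\varepsilon)D+\varepsilon'B$ with $(X,\Delta+\Delta_{\varepsilon,\varepsilon'}+(1-\varepsilon')B)$ klt. With $\lambda_1$ the nef threshold of $B$ for $K_X+\Delta+(1+\varepsilon)D$, \cite[Lemma 2.2]{KMM} yields an extremal ray $R$ with $(K_X+\Delta+(1+\varepsilon)D+\lambda_1B)\cdot R=0$ that is negative for the klt pair, so the contraction and (if needed) the flip exist by \cite{BCHM}; since $B\cdot R>0$, the ray is $(K_X+\Delta+(1+\varepsilon)D)$-negative, giving the desired MMP-with-scaling step. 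Finally, Proposition \ref{prop:pot discr} shows the resulting triple is again pklt, so the construction can be iterated. This reduction-to-a-klt-pair mechanism (bigness via the ample scaling divisor plus the complement-type Theorem \ref{thm:compl}) is the essential idea missing from your proposal; it is also the real reason the perturbation by $\varepsilon$ is needed, namely to leave room to absorb the auxiliary ample $\varepsilon'B$ while staying pklt.
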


Using Theorem \ref{thrm:2}, we can easily obtain the following anticanonical analog of of the main result of \cite{BCHM}.

\begin{corollary} \label{cor:1}
Let $(X,\Delta,-(K_X+\Delta))$ be a pklt triple. Then we can run the $-(K_X+\Delta)$-MMP with scaling of an ample divisor.
\end{corollary}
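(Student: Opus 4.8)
The plan is to deduce Corollary \ref{cor:1} directly from Theorem \ref{thrm:2} by specializing $D=-(K_X+\Delta)$, so the only work is to check the hypotheses and to identify the resulting MMP.

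First I would verify that the assumptions of Theorem \ref{thrm:2} are met. By hypothesis $(X,\Delta,-(K_X+\Delta))$ is a pklt triple, so in particular it is a potential triple: $-(K_X+\Delta)$ is a pseudoeffective $\R$-Cartier divisor on $X$, and the pklt condition $\inf_E\{A_{X,\Delta}(E)-\sigma_E(-(K_X+\Delta))\}>0$ holds. Hence Theorem \ref{thrm:2} applies with $D=-(K_X+\Delta)$, and for any $0<\varepsilon\ll 1$ (choosing $\varepsilon$ rational if convenient) we may run the $(K_X+\Delta+(1+\varepsilon)D)$-MMP with scaling of an ample divisor $A$, where $A$ is chosen ample with $K_X+\Delta+(1+\varepsilon)D+A$ nef.

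Next I would simply compute, for $D=-(K_X+\Delta)$,
$$
K_X+\Delta+(1+\varepsilon)D=K_X+\Delta-(1+\varepsilon)(K_X+\Delta)=-\varepsilon(K_X+\Delta),
$$
so the divisor being run is a positive multiple of $-(K_X+\Delta)$. Running the MMP for an $\R$-divisor is insensitive to multiplication by a positive constant: an extremal ray $R$ satisfies $\bigl(-\varepsilon(K_X+\Delta)\bigr)\cdot R<0$ if and only if $\bigl(-(K_X+\Delta)\bigr)\cdot R<0$, the corresponding divisorial contractions and flips are the same, and the process terminates at the same stage; the scaling divisor is likewise rescaled ($A$ for $-\varepsilon(K_X+\Delta)$ plays the role of $A/\varepsilon$ for $-(K_X+\Delta)$, since $-\varepsilon(K_X+\Delta)+A$ nef $\iff$ $-(K_X+\Delta)+A/\varepsilon$ nef). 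Therefore the $(K_X+\Delta+(1+\varepsilon)D)$-MMP with scaling produced by Theorem \ref{thrm:2} is literally a $-(K_X+\Delta)$-MMP with scaling of an ample divisor, which is the assertion.

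As this is a formal consequence of Theorem \ref{thrm:2}, I do not expect any genuine obstacle; the only points requiring care are checking that $D=-(K_X+\Delta)$ satisfies the pseudoeffectivity and $\R$-Cartier conditions built into the notion of a potential triple (immediate under the stated hypothesis) and recording the invariance of the MMP under positive rescaling of the divisor being run. All the substantive content sits upstream, in Theorems \ref{thrm:1} and \ref{thrm:2}.
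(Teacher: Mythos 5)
Your proposal is correct and is essentially the paper's own proof: the paper simply sets $D=-(K_X+\Delta)$ in Theorem \ref{thrm:2}, and your computation that $K_X+\Delta+(1+\varepsilon)D=-\varepsilon(K_X+\Delta)$ together with the invariance of the MMP under positive rescaling is exactly the (implicit) content of that reduction.
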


Note that in Theorem \ref{thrm:2} and Corollary \ref{cor:1}, we may assume that the respective MMP is eventually a sequence of flips whose termination is still unknown. Thus, in the surface case, we obtain a $-(K_X+\Delta)$-minimal model of $(X,\Delta)$ since there are no flips in dimension $2$.

\begin{corollary} \label{cor:2}
Let $(X,\Delta,-(K_X+\Delta))$ be a pklt triple of $\dim X=2$. Then there exists a $-(K_X+\Delta)$-minimal model of $(X,\Delta)$.
\end{corollary}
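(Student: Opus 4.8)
The plan is to derive Corollary~\ref{cor:2} from Corollary~\ref{cor:1} by a termination argument available only in dimension two. By Corollary~\ref{cor:1}, starting from the pklt pair $(X,\Delta)$ there is a $-(K_X+\Delta)$-MMP with scaling of an ample divisor, and by Theorem~\ref{thrm:2} (applied with $D=-(K_X+\Delta)$) this is precisely an MMP for $K_X+\Delta+(1+\varepsilon)D=-\varepsilon(K_X+\Delta)=\varepsilon D$ for some $0<\varepsilon\ll 1$. Since $D=-(K_X+\Delta)$ is pseudoeffective --- this is part of the hypothesis that $(X,\Delta,D)$ be a potential triple --- so is $\varepsilon D$, and I would run this MMP to the end.

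Next I would go through the possible types of steps. No step can be a Mori fibre space contraction: such a contraction would make the transform of $\varepsilon D$ anti-ample over the base, which is impossible for a pseudoeffective divisor (it would fail to remain pseudoeffective after restriction to a general fibre, or already on the ambient surface itself when the base is a point). Hence every step is birational, and on a surface it is either a divisorial contraction or a small contraction. A divisorial contraction of a normal projective surface drops the Picard number $\rho$ by one, and $\rho\ge 1$ is maintained throughout, so at most $\rho(X)-1$ of them can occur. After these, the MMP could continue only through small contractions; but on a surface every small proper birational morphism is an isomorphism, so there are no flipping contractions in dimension two. Consequently the MMP terminates after finitely many divisorial contractions, say at $\varphi\colon X\dashrightarrow X'$ with $\Delta'=\varphi_*\Delta$.

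Finally I would observe that $(X',\Delta')$ is the desired $-(K_X+\Delta)$-minimal model: the transform of $D=-(K_X+\Delta)$ under $\varphi$ is $-(K_{X'}+\Delta')$, it stays pseudoeffective, and termination of the $\varepsilon D$-MMP means exactly that $-(K_{X'}+\Delta')$ is nef. I do not anticipate any real obstacle here; the entire content is the termination statement, which in dimension two is forced by the two elementary facts just used (boundedness of the Picard number and the absence of small birational morphisms on surfaces). The only points that deserve an explicit line are the identification, via Theorem~\ref{thrm:2} and Proposition~\ref{prop:perturbation}, of the MMP of Corollary~\ref{cor:1} with a genuine MMP for the pseudoeffective divisor $\varepsilon D$, and the exclusion of a Mori fibre space outcome, both of which come down to the pseudoeffectivity of $-(K_X+\Delta)$.
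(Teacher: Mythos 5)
Your proposal is correct and follows essentially the same route as the paper: run the MMP provided by Corollary~\ref{cor:1} (equivalently the $\varepsilon D$-MMP of Theorem~\ref{thrm:2}), note that in dimension two there are no flips so every step is a divisorial contraction dropping the Picard number, and conclude termination at a $-(K_X+\Delta)$-minimal model. Your explicit exclusion of a Mori fibre space outcome via pseudoeffectivity of $-(K_X+\Delta)$ is a point the paper leaves implicit, but it is the same argument in substance.
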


\medskip

We explain the strategy of the proof of Theorem \ref{thrm:1}. Suppose that we are given a pklt triple $(X,\Delta,D)$. We first show that there exists a sequence of valuations $\nu_{\ell}$ which approximates the log canonical threshold $\lct_{\sigma}\left(X,\Delta,D+\frac{1}{\ell}A\right)$ (in the sense of Proposition \ref{prop:enlarged ideal for lct}) such that all the centers of $\nu_{\ell}$ coincide, say, $c_X(\nu_{\ell})=x$. Then we show that there exists some $\delta>0$ such that $\nu_{\ell}(\mathfrak{m}_x)\ge \delta$ holds for all $\ell$. In the course of the proof, we use the theory of diminished multiplier ideals which is introduced in \cite{Hac},\cite{Leh}. Using the similar arguments in the proofs of \cite[Theorem 3.3, Proposition 3.5]{Xu2}, we can further assume that there is a field extension $\C\subseteq K$ and a log-smooth pair $(Y,E)$ over $(X_x)_K$ such that $\nu_{\ell}$ are restrictions of some quasi-monomial valuations $\omega_{\ell}\in \mathrm{QM}(Y,E)$. Finally, by the continuity of $\nu\mapsto \sigma_{\nu}(D_K)$ on the dual complex $\mathcal{D}(E_K)$, we can prove that there exists a convergent subsequence of $\{\nu_{\ell}\}$ whose limit computes $\lct_{\sigma}(X,\Delta,D)$, and the limit is actually a quasi-monomial valuation.

One of the difficulties in the proof is that the function $\nu\mapsto \sigma_{\nu}(D)$ on $\mathrm{Val}_X\cap \{A_{X,\Delta}(\nu)<\infty\}$ is not necessarily continuous unless $D$ is big. Thus we focus on a log-smooth model $(Y,E)$ over $X$ instead of $\mathrm{Val}_X\cap \{A_{X,\Delta}(\nu)<\infty\}$. Then \cite[Theorem B]{BFJ} guarantees that the function is continuous on the dual complex $\mathcal{D}(E)$, which is sufficient in our proof.

Theorem \ref{thrm:2} is proved by the Diophantine approximation on $\mathrm{QM}(Y,E)$ and \cite[Proposition 4.9]{CJK}.

The rest of this paper is organized as follows. In Section \ref{sect:prelim}, we recall the definition of potential triples and the associated notions such as pklt, plc triples. We also recall the notions of Koll\'{a}r component, quasi-monomial valuation, asymptotic order and log canonical threshold. Section \ref{sect:asympt mult ideal} is devoted to collecting notions and lemmas on the asymptotic multiplier ideals. In Sections \ref{sect:asympt mult ideal} and \ref{sect:proofs}, we prove our main results, Theorem \ref{thrm:1} and Theorem \ref{thrm:2}.

\section{Preliminaries}\label{sect:prelim}
In this paper, we work over an algebraically closed field $K$ of characteristic zero, e.g., the complex number field $\C$. A variety over $K$ is an integral and separated scheme of finite type over $K$, unless otherwise stated. For a normal variety $X$ and a Cartier divisor $D$ on $X$, we denote by $\mathfrak{b}(|D|)$ the ideal of the base locus of $|D|$. For a closed subscheme $N\subseteq X$, $\mathcal{I}_N$ is the ideal sheaf of $\mathcal{O}_X$ corresponding to $N$. For an ideal $\mathfrak{a}\subseteq \mathcal{O}_X$, $\mathcal{Z}(\mathfrak{a})\subseteq X$ is the closed subscheme corresponding to $\mathfrak{a}$. For a scheme $X$ and a point $x\in X$, $\mathfrak{m}_x$ denotes the maximal ideal of $\mathcal{O}_{X,x}$.

\subsection{Pairs and triples}

A \textit{pair} $(X,\Delta)$ consists of a normal variety $X$ and an effective $\Q$-divisor $\Delta$ on $X$ such that $K_X+\Delta$ is $\Q$-Cartier. We say a proper birational morphism $f\colon Y\to X$ is a \textit{log resolution} of a pair $(X,\Delta)$ if $Y$ is smooth and $\Supp(f^{-1}_*\Delta+\mathrm{Exc}(f))$ is a simple normal crossing divisor. For a flat morphism $X\to V$ from $X$ to a normal variety $V$, $f:Y\to X$ is a \textit{fiberwise log resolution} of $(X,\Delta)\to V$ if the following two conditions are satisfied:

\begin{enumerate}[(1)]
    \item for each $u\in V$, $Y_u\to X_u$ is a log resolution of $(X_u,\Delta_u)$, and
    \item if we let $E\coloneqq \Supp(f^{-1}_*\Delta)+\mathrm{Exc}(f)=\sum\limits_{i\in I} E_i$, then $E_J\coloneqq \bigcap\limits_{j\in J}E_j$ has geometric irreducible fibers over $V$.
\end{enumerate}

If $x\in X$ is a point on $X$, then $(X_x\coloneqq \Spec \mathcal{O}_{X,x},\Delta_x)$ denotes the localization of $X$ and $\Delta$ at $x$.

Let $E$ be a prime divisor over $X$, i.e., there exists a proper birational morphism $f\colon Y\to X$ from a normal projective variety $Y$ such that $E$ is a prime divisor on $Y$. For such $f$, there exists a divisor $\Delta_Y$ on $Y$ such that $K_Y+\Delta_Y=f^*(K_X+\Delta)$. The \emph{log discrepancy} $A_{X,\Delta}(E)$ of $E$ with respect to $(X,\Delta)$ is defined as $A_{X,\Delta}(E)\coloneqq  1-\mult_E(\Delta_Y)$, where $\mult_E(\Delta_Y)$ denotes the coefficient of $E$ in $\Delta_Y$. It is well known that $A_{X,\Delta}(E)$ depends only on the divisorial valuation on $K(X)$ defined by $E$. A pair $(X,\Delta)$ is said to be \textit{kawamata log terminal (klt)} (resp. \textit{log canonical} (\textit{lc})) if $\inf\limits_{E}A_{X,\Delta}(E)>0$ (resp. $\geq 0$), where $\inf$ is taken over all the prime divisors $E$ over $X$. A pair $(X,\Delta)$ is said to be  \emph{purely log terminal (plt)} if $A_{X,\Delta}(E)>0$ for every exceptional divisor $E$ over $X$.  For more details on the notions related to pairs, see \cite{KM},\cite{Kol}.

Let $x\in (X,\Delta)$ be a klt singularity, i.e., $A_{X,\Delta}(E)>0$ for all prime divisors $E$ such that $x\in c_X(E)$ and $f\colon Y\to X$ a proper birational morphism. We say a prime divisor $S$ over $X$ is a \textit{Koll\'{a}r component} of the singularity $x\in (X,\Delta)$ if there exists a birational morphism $f:Y\to X$ satisfying the following conditions:
\begin{enumerate}[(1)]
    \item $f$ is isomorphic over $X\setminus \{x\}$,
    \item $S\coloneqq f^{-1}(x)$ is a prime divisor such that $(Y,f^{-1}\Delta+S)$ is a plt pair, and
    \item $-S$ is a $\Q$-Cartier divisor which is ample over $X$.
\end{enumerate}
Note that the Koll\'{a}r component always exists by \cite[Theorem 1.1]{Pro} and \cite[Lemma 1]{Xu1}.

A \textit{potential triple} $(X,\Delta,D)$ consists of a pair $(X,\Delta)$ with a normal projective variety $X$ and a pseudoeffective $\Q$-Cartier divisor $D$ on $X$. This notion was first introduced and studied by Choi--Park in \cite{CP} in the case where $D=-(K_X+\Delta)$. Recently, the notion of potential pair was generalized to the current form in \cite{CJK}. Potential triples can be considered as a natural generalization of generalized pairs defined by Birkar-Zhang \cite{BZ}. One can easily see that potential triples satisfy similar properties of (generalized) pairs. See also Remark \ref{rmk:potential triple} below.

Let $Y$ be a smooth projective variety and $D$ a big $\Q$-Cartier divisor on $Y$. For a prime divisor $E$ on $Y$, the \textit{asymptotic divisorial valuation} of $D$ along $E$ is defined as $\sigma_E(D)\coloneqq \inf\{\mult_E(D')\mid D'\in |D|_{\Q}\}$.  If $D$ is a pseudoeffective $\Q$-Cartier divisor, then we define $\sigma_E(D)\coloneqq  \lim\limits_{\varepsilon\rightarrow 0^+} \sigma_E(D+\varepsilon A)$, where $A$ is an ample Cartier divisor on $Y$. We note that the limit is independent of the choice of $A$. It is also known that there are only finitely many prime divisors $E$ such that $\sigma_E(D)>0$ by \cite[Corollary III.1.11]{Nak}.

Let $X$ be a normal projective variety and $D$ a pseudoeffective $\Q$-Cartier divisor on $X$. For a prime divisor $E$ over $X$, we define $\sigma_E(D)$ as $\sigma_E(D)\coloneqq \sigma_E(f^\ast D)$, where $f\colon Y\rightarrow X$ is a resolution of $X$ such that $E$ is a prime divisor on $Y$. This definition does not depend on the choice of the resolution \cite[Theorem III.5.16]{Nak}.

\begin{definition}
Let $(X,\Delta,D)$ be a potential triple. For a prime divisor $E$ over $X$, we define the \textit{potential log discrepancy} $a(E;X,\Delta,D)$ of $E$ with respect to $(X,\Delta,D)$ as
$$ a(E;X,\Delta,D)\coloneqq  A_{X,\Delta}(E)-\sigma_E(D).$$
We say that $(X,\Delta,D)$ is \textit{weakly potentially kawamata log terminal (weakly pklt)} if $a(E;X,\Delta,D)>0$ holds for any prime divisor $E$ over $X$. We say that the potential triple $(X,\Delta,D)$ is \textit{potentially kawamata log terminal (pklt)} if
$$\inf_E a(E;X,\Delta,D)>0,$$
where $\inf$ is taken over all prime divisors $E$ over $X$.

If $D=-(K_X+\Delta)$ is pseudoeffective in a potential triple $(X,\Delta,D)$, then we simply call $(X,\Delta)$ a \emph{potential pair}.
\end{definition}

\begin{remark}\label{rmk:potential triple}
Note that if $D$ admits a birational Zariski decomposition, then we can reduce the problems to the case of generalized pairs. In particular, in a $\Q$-factorial potential triple $(X,\Delta,D)$, if $D$ is a $b$-nef divisor, then $(X,\Delta+D)$ is nothing but a generalized pair. Thus, the notion of potential triples is more general than that of generalized pairs. See \cite{BZ}, \cite{B2},\cite[Section 3]{CJK}, \cite{CHLX} and the references therein for more details.
\end{remark}

To a weakly pklt triple $(X,\Delta,D)$ with a big $\Q$-divisor $D$, one can associate a klt pair as follows. We note that in this case, the notions of weakly pklt and pklt coincide.

\begin{theorem}[cf. {\cite[Theorem 4.4]{CJK}}]\label{thm:compl}
Let $(X,\Delta,D)$ be a potential triple with $D$ a big $\Q$-divisor. If $(X,\Delta,D)$ is  weakly pklt, then there is an effective divisor $D'\sim_{\Q}D$ such that $(X,\Delta+D')$ is klt.
\end{theorem}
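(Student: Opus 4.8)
The plan (following the argument of \cite[Theorem 4.4]{CJK}) is to encode the weak pklt condition as the triviality of the asymptotic multiplier ideal of $D$, and then to realize that triviality by an honest member of $|D|_{\Q}$. Two preliminary reductions come first. Since $\sigma_E(D)\geq 0$ for every prime divisor $E$ over $X$, the hypothesis $A_{X,\Delta}(E)-\sigma_E(D)>0$ forces $A_{X,\Delta}(E)>0$ for all such $E$, so $(X,\Delta)$ is klt and the multiplier ideals $\mathcal{J}(X,\Delta;-)$ are honest ideal sheaves computed on any log resolution. Next, fix $m_0\in\Z_{>0}$ with $L\coloneqq m_0D$ Cartier; bigness of $D$ makes the base ideals $\mathfrak{b}_m\coloneqq\mathfrak{b}(|mL|)$ a graded sequence $\mathfrak{b}_\bullet$ with $\mathfrak{b}_m\neq 0$ for $m\gg 0$, and by Nakayama's identification of asymptotic orders of vanishing with the asymptotic divisorial valuation (\cite{Nak}) one has $\ord_E(\mathfrak{b}_\bullet)=\sigma_E(L)=m_0\sigma_E(D)$ for every prime divisor $E$ over $X$.

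Writing $c\coloneqq 1/m_0$, the crux is to show
$$
\mathcal{J}\bigl(X,\Delta;\mathfrak{b}_\bullet^{\,c}\bigr)=\mathcal{O}_X,
$$
which is equivalent to $\lct(X,\Delta;\mathfrak{b}_\bullet)>c$. Since $D$ is big the notions of weakly pklt and pklt coincide, so $\inf_E\bigl(A_{X,\Delta}(E)-\sigma_E(D)\bigr)>0$; combined with the fact that for a big divisor the asymptotic log canonical threshold is controlled by the divisorial invariants $\sigma_E(D)$ (equivalently, $\ord_E(\mathfrak{b}_\bullet)$), this uniform positivity yields $\lct(X,\Delta;\mathfrak{b}_\bullet)>c$. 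This is the step where bigness is essential: for a merely pseudoeffective $D$ the relevant asymptotic thresholds need not be attained, pointwise positivity of $a(E;X,\Delta,D)$ need not upgrade to a strict inequality on the threshold, and the equivalence above can fail.

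Finally we pass to a divisor. By the stabilization of asymptotic multiplier ideals of graded sequences there is a sufficiently divisible $p$, which we may take with $p>m_0$ (so that $c/p<1$), such that $\mathcal{J}(X,\Delta;\mathfrak{b}_\bullet^{\,c})=\mathcal{J}(X,\Delta;\mathfrak{b}_p^{\,c/p})$. Fix a log resolution $g\colon Y\to X$ of $(X,\Delta)$ that in addition principalizes $\mathfrak{b}_p$, say $\mathfrak{b}_p\mathcal{O}_Y=\mathcal{O}_Y(-F)$ with $F$ simple normal crossing; then $|g^{*}(pL)|=|F|+|g^{*}(pL)-F|$ with the latter base point free. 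For a general member $L_p\in|pL|$, Bertini's theorem gives $g^{*}L_p=F+\widehat{M}$ with $\widehat{M}$ reduced, smooth, and transverse to all previous components, so $g$ is also a log resolution of $(X,\Delta+L_p)$; since $\widehat{M}$ occurs in $g^{*}L_p$ with coefficient $1$ and $c/p<1$, we get $\floor{\tfrac{c}{p}g^{*}L_p}=\floor{\tfrac{c}{p}F}$, and comparing the two multiplier ideals on $Y$ gives
$$
\mathcal{J}\Bigl(X,\Delta+\tfrac{c}{p}L_p\Bigr)=\mathcal{J}\bigl(X,\Delta;\mathfrak{b}_p^{\,c/p}\bigr)=\mathcal{O}_X.
$$
Hence $D'\coloneqq\tfrac{c}{p}L_p$ is effective, $D'\sim_{\Q}cL=D$, the divisor $K_X+\Delta+D'$ is $\Q$-Cartier, and $(X,\Delta+D')$ is klt, which is the assertion.

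The step I expect to be the main obstacle is the displayed triviality $\mathcal{J}(X,\Delta;\mathfrak{b}_\bullet^{\,c})=\mathcal{O}_X$: one must genuinely use that the pklt condition is a uniform positivity statement—$\inf_E(A_{X,\Delta}(E)-\sigma_E(D))>0$, not merely pointwise positivity—together with the boundedness properties of asymptotic invariants of big divisors, in order to conclude that the asymptotic log canonical threshold of $\mathfrak{b}_\bullet$ \emph{strictly} exceeds $c$. Everything else (the reduction to a klt pair, the stabilization of asymptotic multiplier ideals, and the Bertini argument producing a single general member of $|pD|$, which only works because the coefficient $c/p$ has been arranged to be $<1$) is routine.
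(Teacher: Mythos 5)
The paper itself gives no proof of this statement (it is quoted from \cite[Theorem 4.4]{CJK}), so I can only judge your argument on its own terms, and it has a genuine gap exactly at the step you flag. Your outer layers are fine: $A_{X,\Delta}(E)>\sigma_E(D)\ge 0$ for all $E$ does give that $(X,\Delta)$ is klt; for big $D$ one has $\ord_E(\mathfrak{b}_\bullet)=m_0\sigma_E(D)$; and the stabilization-plus-Bertini passage from $\mathcal{J}(X,\Delta;\mathfrak{b}_\bullet^{\,c})=\mathcal{O}_X$ to a klt member $D'\sim_{\Q}D$ is standard. But the central claim, that $\inf_E\bigl(A_{X,\Delta}(E)-\sigma_E(D)\bigr)>0$ ``yields'' $\lct(X,\Delta;\mathfrak{b}_\bullet)>c$, is not a formal consequence of any boundedness property of big divisors, and you give no argument for it. The needed inequality is $\inf_E A_{X,\Delta}(E)/\sigma_E(D)>1$, i.e.\ $\lct_{\sigma}(X,\Delta,D)>1$, and a uniform lower bound on the \emph{difference} $A-\sigma$ does not upgrade to a uniform bound on the \emph{ratio}, because $\sigma_E(D)$ is unbounded as $E$ ranges over divisors over $X$ (the only easy bound, $\sigma_E(D)\le\mult_E(N)\le A_{X,\Delta}(E)/\lct(X,\Delta;N)$ for $D\sim_{\Q}A+N$, gives $A/\sigma\ge\lct(X,\Delta;N)$, which need not exceed $1$). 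Indeed, within this paper the passage from ``$A-\sigma$ uniformly positive on divisors'' to ``$\lct_{\sigma}>1$'' is precisely the nontrivial content of Theorem \ref{thrm:1} combined with Proposition \ref{prop:perturbation}: one needs a quasi-monomial valuation computing the threshold (for big $D$ this can be supplied by Theorem \ref{thm:valuation quasi-monomial}, i.e.\ \cite{Xu2}, applied to $\mathfrak{b}_\bullet$), the Lipschitz continuity of $\nu\mapsto\sigma_\nu(D)$ on simplices of quasi-monomial valuations (Proposition \ref{prop:continuity}, via \cite{BFJ}), and a Diophantine approximation by divisorial valuations as in \cite[Lemma 2.7]{LX1} to contradict the divisorial positivity. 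None of this is in your sketch, so the displayed triviality of the asymptotic multiplier ideal is unproved.

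A secondary issue: you invoke ``for big $D$ weakly pklt and pklt coincide'' to get the uniform positivity you then rely on. The paper merely asserts this coincidence in passing (with reference to \cite{CJK}), and in \cite{CJK} it is essentially intertwined with the very theorem you are proving (once $D'\sim_{\Q}D$ with $(X,\Delta+D')$ klt exists, uniform positivity follows easily, and conversely). So in a self-contained proof you must either prove the coincidence or show that pointwise positivity $A_{X,\Delta}(E)>\sigma_E(D)$ already forces $\lct_{\sigma}(X,\Delta,D)>1$; note that the approximation argument sketched above rules out $\lct_{\sigma}<1$ from pointwise positivity, but the boundary case $\lct_{\sigma}=1$ needs an extra idea, and your proposal does not address it.
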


Let $X$ be a normal projective variety and $D$ an effective $\Q$-Cartier divisor on $X$. A birational contraction $\varphi\colon X\dashrightarrow Y$ is said to be  \textit{$D$-nonpositive} (resp. \textit{$D$-negative}) if $\varphi_*D$ is a $\Q$-Cartier divisor on a normal projective variety $Y$, and there is a common resolution $(p,q)\colon Z\rightarrow X\times Y$ such that
\begin{align*}
    p^\ast D=q^\ast \varphi_\ast D+E,
\end{align*}
where $E\geq 0$ is a $q$-exceptional divisor (resp. additionally, $\Supp(E)$ contains all the strict transforms of the $\varphi$-exceptional divisors).

\begin{proposition}[{\cite[Lemma 2.5]{CJL2}}] \label{prop:pot discr}
    Let $(X,\Delta,D)$ be a potential triple and $R$ a $(K_{X}+\Delta+D)$-negative extremal ray of $\overline{\NE}(X)$. Assume that we have either a divisorial contraction or a flip $\varphi\colon X\dashrightarrow X'$ associated to the ray $R$. Let $\Delta':=\varphi_*\Delta$ and $D':=\varphi_*D$. Then we have
$$a(E;X,\Delta,D)\leq a(E;X',\Delta',D') $$
for any prime divisor $E$ over $X$.
\end{proposition}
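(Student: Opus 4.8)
The claim is that the potential log discrepancy does not decrease along a step of the MMP, extending the classical fact for the ordinary log discrepancy; since $a(E;X,\Delta,D)=A_{X,\Delta}(E)-\sigma_E(D)$, what is at stake is the interaction of $A_{X,\Delta}$ and the asymptotic valuation $\sigma_E$. The plan is to first reduce to the case where $D$ is big --- in which $\sigma_E$ is controlled by effective representatives --- and then to bootstrap from the standard monotonicity of log discrepancies. We may assume throughout that $X$, and hence $X'$, is $\Q$-factorial (the setting in which $\varphi$ is produced), so that push-forward preserves $\Q$-Cartierness.

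\emph{Reduction to $D$ big.} I will use that for a pseudoeffective $\Q$-Cartier divisor $L$ and a big $\Q$-Cartier divisor $M$ one has $\lim_{\varepsilon\to0^+}\sigma_E(L+\varepsilon M)=\sigma_E(L)$ for every prime divisor $E$ over $X$: the bound $\le$ is subadditivity of $\sigma_E$ ($\sigma_E(L+\varepsilon M)\le\sigma_E(L)+\varepsilon\,\sigma_E(M)$), and the bound $\ge$ follows from the lower semicontinuity of $\sigma_E$ on the pseudoeffective cone \cite{Nak} (equivalently, from subadditivity together with the vanishing of $\sigma_E$ on ample classes: for an ample $A$ and a fixed $\delta>0$ the class $\delta A-\varepsilon M$ is ample for $0<\varepsilon\ll\delta$, so $\sigma_E(L+\delta A)\le\sigma_E(L+\varepsilon M)$, whence $\liminf_{\varepsilon\to0^+}\sigma_E(L+\varepsilon M)\ge\sigma_E(L+\delta A)$ for all $\delta$, and so $\ge\sigma_E(L)$). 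Now fix an ample Cartier divisor $H$ on $X$. Then $D+\varepsilon H$ is big for $\varepsilon>0$, and since $R$ is $(K_X+\Delta+D)$-negative, for $0<\varepsilon\ll1$ the ray $R$ is $(K_X+\Delta+D+\varepsilon H)$-negative and $\varphi$ is still the divisorial contraction or flip associated to $R$ (the contraction of $R$, and in the flipping case its flip, are unaffected by this perturbation), with $\varphi_*(D+\varepsilon H)=D'+\varepsilon\,\varphi_*H$ pseudoeffective and $\Q$-Cartier. Granting the statement for big divisors gives $a(E;X,\Delta,D+\varepsilon H)\le a(E;X',\Delta',D'+\varepsilon\,\varphi_*H)$ for all such $\varepsilon$; letting $\varepsilon\to0^+$ and applying the limit fact above with $M=H$ on $X$ and with $M=\varphi_*H$ on $X'$, the two sides tend respectively to $a(E;X,\Delta,D)$ and $a(E;X',\Delta',D')$, and we are done.

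\emph{The case $D$ big.} By Nakayama's description of the asymptotic order of a big divisor \cite{Nak}, $\sigma_E(D)=\inf\{\mult_E(f^*B)\mid B\ge0,\ B\sim_\Q D\}$ for any $f\colon Y\to X$ on which $E$ is a divisor; combined with $A_{X,\Delta+B}(E)=A_{X,\Delta}(E)-\mult_E(f^*B)$ this gives
\[
a(E;X,\Delta,D)=\sup\{A_{X,\Delta+B}(E)\mid B\ge0,\ B\sim_\Q D\},
\]
and likewise $a(E;X',\Delta',D')=\sup\{A_{X',\Delta'+B'}(E)\mid B'\ge0,\ B'\sim_\Q D'\}$. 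Now fix $B\ge0$ with $B\sim_\Q D$. Then $B\equiv D$, hence $(K_X+\Delta+B)\cdot R=(K_X+\Delta+D)\cdot R<0$ and $K_X+\Delta+B\sim_\Q K_X+\Delta+D$; therefore $\varphi$ is also the divisorial contraction or flip of the $(K_X+\Delta+B)$-negative ray $R$, i.e. a step of a $(K_X+\Delta+B)$-MMP. By the standard monotonicity of log discrepancies along a step of the MMP --- a consequence of the negativity lemma, which only requires the divisors in question to be $\Q$-Cartier (see, e.g., \cite[Lemma 3.38]{KM}) --- we get $A_{X,\Delta+B}(E)\le A_{X',\Delta'+\varphi_*B}(E)$ for every prime divisor $E$ over $X$. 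Since $\varphi_*B\ge0$ and $\varphi_*B\sim_\Q\varphi_*D=D'$, the right-hand side is at most $a(E;X',\Delta',D')$; taking the supremum over $B$ completes the proof.

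The crux is the reduction to big $D$, and specifically the passage to the limit on the side of $X'$: for a divisorial contraction $\varphi_*H$ is again ample on $X'$ and the limit is immediate, but for a flip $\varphi_*H$ is only big, and since $\sigma_E$ is not continuous on the pseudoeffective cone (only on the big cone) and is not monotone, one genuinely needs the lower semicontinuity of $\sigma_E$ --- together with its subadditivity --- to conclude. The big case itself is essentially the classical argument, the one new point being that every effective $B\sim_\Q D$ lies in the numerical class of $D$, so that $\varphi$ automatically remains a valid MMP step for $K_X+\Delta+B$.
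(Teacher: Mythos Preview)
The paper does not prove this proposition; it is quoted from \cite[Lemma~2.5]{CJL2} without argument, so there is no in-paper proof to compare against.

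Your argument is correct. The reduction to big $D$ by perturbing with an ample $H$ is sound, and you correctly isolate the one delicate point: on $X'$ the pushforward $\varphi_*H$ is in general only big (in the flip case), so the convergence $\sigma_E(D'+\varepsilon\,\varphi_*H)\to\sigma_E(D')$ does not follow from the definition of $\sigma_E$ alone but needs subadditivity together with lower semicontinuity on the pseudoeffective cone, both of which you invoke properly. In the big case, rewriting $a(E;X,\Delta,D)=\sup_{B\in|D|_\Q}A_{X,\Delta+B}(E)$ reduces the claim to the classical monotonicity of log discrepancies under a $(K_X+\Delta+B)$-negative step; since $B\sim_\Q D$ the same $\varphi$ is such a step, and the negativity lemma finishes. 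The $\Q$-factoriality you add is needed only so that $\varphi_*H$ is $\Q$-Cartier in the reduction step, and this is the ambient hypothesis whenever the proposition is actually applied in the paper (in the proof of Theorem~\ref{thrm:2}).
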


\begin{definition}
Let $(X,\Delta,D)$ be a pklt triple and $A$ an ample divisor on $X$. Let
$$ (X,\Delta,D)\coloneqq (X_0,\Delta_0,D_0) \overset{f_1}{\dashrightarrow} (X_1,\Delta_1,D_1)\overset{f_2}{\dashrightarrow} \cdots$$
be a sequence of $(K_X+\Delta+D)$-negative extremal divisorial contractions and flips associated to a $(K_{X_i}+\Delta_i+D_i)$-negative extremal ray $R_i$, where $\Delta_i,D_i$ and $A_i$ are the strict transforms of $\Delta,D$ and $A$ on $X_i$, respectively. Denote $\lambda_i\coloneqq  \inf\{t\ge 0\mid K_{X_i}+\Delta_i+D_i+tA_i\text{ is nef}\}$. We say that the sequence is \textit{a $(K_X+\Delta+D)$-MMP with scaling of $A$} if $(K_{X_i}+\Delta_i+D_i+\lambda_iA_i)\cdot R_i=0$.
\end{definition}

\begin{lemma} \label{lem:pklt to plc}
Let $(X,\Delta,D)$ be a potential triple. If $(X,\Delta,(1+\varepsilon)D)$ is plc for some $\varepsilon>0$ and $(X,\Delta,D)$ is pklt, then $(X,\Delta,(1+\varepsilon_{0})D)$ is pklt for any $\varepsilon_0$ such that $0\le \varepsilon_{0}<\varepsilon$.
\end{lemma}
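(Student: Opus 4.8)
The plan is to use the elementary observation that, for a fixed prime divisor $E$ over $X$, the potential log discrepancy $a(E;X,\Delta,tD)$ is an affine function of the scaling parameter $t\ge 0$. Indeed, the asymptotic divisorial valuation is positively homogeneous: $\sigma_E(tD)=t\,\sigma_E(D)$ for every $t\ge 0$. For big $D$ this is immediate from the description $\sigma_E(D)=\inf\{\mult_E(D')\mid D'\in|D|_{\Q}\}$, since $D''\mapsto \tfrac1t D''$ is a bijection $|tD|_{\Q}\to|D|_{\Q}$ multiplying $\mult_E$ by $\tfrac1t$; for pseudoeffective $D$ it then follows by taking $\sigma_E(D)=\lim_{s\to0^+}\sigma_E(D+sA)$. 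Consequently
$$a(E;X,\Delta,tD)=A_{X,\Delta}(E)-t\,\sigma_E(D)\qquad\text{for all }t\ge 0,$$
and in particular $(1+\varepsilon_0)D$ is again a pseudoeffective divisor, so $(X,\Delta,(1+\varepsilon_0)D)$ is a potential triple. (If $\varepsilon_0\notin\Q$ one notes that the definitions of $\sigma_E$, of $a(E;-)$ and of pklt/plc extend verbatim to $\R$-Cartier divisors.)

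Next I would write $1+\varepsilon_0$ as a convex combination of the two endpoints at which we have information. Put $\lambda:=\tfrac{\varepsilon-\varepsilon_0}{\varepsilon}\in(0,1]$, so that $1+\varepsilon_0=\lambda\cdot 1+(1-\lambda)(1+\varepsilon)$. Plugging $t=1,\,1+\varepsilon,\,1+\varepsilon_0$ into the affine formula above gives, for every prime divisor $E$ over $X$,
$$a(E;X,\Delta,(1+\varepsilon_0)D)=\lambda\, a(E;X,\Delta,D)+(1-\lambda)\, a(E;X,\Delta,(1+\varepsilon)D).$$

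Finally I would invoke the hypotheses. Since $(X,\Delta,D)$ is pklt there is a constant $\delta>0$ with $a(E;X,\Delta,D)\ge\delta$ for all $E$; since $(X,\Delta,(1+\varepsilon)D)$ is plc we have $a(E;X,\Delta,(1+\varepsilon)D)\ge0$ for all $E$. As $\lambda>0$ and $1-\lambda\ge0$, the displayed identity yields $a(E;X,\Delta,(1+\varepsilon_0)D)\ge\lambda\delta>0$ for every $E$, with the bound $\lambda\delta$ independent of $E$; hence $\inf_E a(E;X,\Delta,(1+\varepsilon_0)D)\ge\lambda\delta>0$, i.e. $(X,\Delta,(1+\varepsilon_0)D)$ is pklt. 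I do not expect any real obstacle here: the only step needing a word of care is the homogeneity of $\sigma_E$ in the pseudoeffective (rather than big) case, which is handled by the limiting definition; the rest is the convexity bookkeeping above.
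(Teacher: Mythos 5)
Your proof is correct, but it runs along a different track than the paper's. The paper argues by a case split on the size of $\sigma_E(D)$: setting $\alpha=\inf_E\{A_{X,\Delta}(E)-\sigma_E(D)\}>0$, it bounds $A_{X,\Delta}(E)-(1+\varepsilon_0)\sigma_E(D)$ from below by $\tfrac{\alpha}{2}$ when $\sigma_E(D)\le\tfrac{\alpha}{2\varepsilon_0}$ and by $\tfrac{\alpha(\varepsilon-\varepsilon_0)}{2\varepsilon_0}$ when $\sigma_E(D)>\tfrac{\alpha}{2\varepsilon_0}$, concluding with the minimum of the two constants. You instead use the exact convex-combination identity
$$a\bigl(E;X,\Delta,(1+\varepsilon_0)D\bigr)=\lambda\,a(E;X,\Delta,D)+(1-\lambda)\,a\bigl(E;X,\Delta,(1+\varepsilon)D\bigr),\qquad \lambda=\tfrac{\varepsilon-\varepsilon_0}{\varepsilon},$$
which rests on the homogeneity $\sigma_E(tD)=t\,\sigma_E(D)$ — a fact the paper also uses, but only implicitly when it rewrites the plc hypothesis as $A_{X,\Delta}(E)-(1+\varepsilon)\sigma_E(D)\ge0$; you make it explicit and justify it, including the pseudoeffective case via the limiting definition. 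Your route gives a single uniform bound $\lambda\delta$ with no case analysis, and it handles $\varepsilon_0=0$ without the division by $\varepsilon_0$ that makes the paper's threshold $\tfrac{\alpha}{2\varepsilon_0}$ degenerate there; the paper's version requires no discussion of scaling by possibly irrational factors, whereas your parenthetical remark about extending $\sigma_E$ and the pklt/plc notions to $\R$-Cartier divisors (or simply restricting attention to rational $\varepsilon_0$, which is all the paper ever uses) is needed to make the statement for arbitrary real $\varepsilon_0$ fully rigorous — the same caveat applies, silently, to the paper's own proof.
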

\begin{proof}
Let $\alpha=\min\limits_E\{A_{X,\Delta}(E)-\sigma_E(D)\}$,  where $\min$ is taken over all the prime divisors $E$ over $X$. Then by assumption, we have $\alpha>0$ and
$$ A_{X,\Delta}(E)-(1+\varepsilon)\sigma_{E}(D)\ge 0 $$
for all prime divisors $E$ over $X$. Fix a real number $\varepsilon_0$ such that $0\le \varepsilon_{0}<\varepsilon$. If $\sigma_{E}(D)\le \frac{\alpha}{2\varepsilon_{0}}$, then we have
$$ A_{X,\Delta}(E)-(1+\varepsilon_{0})\sigma_{E}(D)\ge \alpha-\frac{\alpha}{2}=\frac{\alpha}{2}. $$
If $\sigma_{E}(D)> \frac{\alpha}{2\varepsilon_{0}}$, then we have
\begin{equation*}
    A_{X,\Delta}(E)-(1+\varepsilon_{0})\sigma_{E}(D)=A_{X,\Delta}(E)-(1+\varepsilon)\sigma_{E}(D)+(\varepsilon-\varepsilon_0)\sigma_E(D)> \frac{\alpha(\varepsilon-\varepsilon_{0})}{2\varepsilon_{0}}.
\end{equation*}
Therefore, $\inf_E\{A_{X,\Delta}(E)-(1+\varepsilon_{0})\sigma_{E}(D)\}\geq \min\left\{\frac{\alpha}{2},\frac{\alpha(\varepsilon-\varepsilon_{0})}{2\varepsilon_{0}}\right\}>0$, which implies that $(X,\Delta,(1+\varepsilon_0)D)$ is pklt.
\end{proof}

\subsection{Valuations} \label{valuation}
Let $X$ be a normal variety. We recall several notions related to valuations of $X$ mainly from \cite[Section 4.1]{JM}. For a valuation $\nu$ on $K(X)$, if there is a point $x$ of $X$ which induces a local inclusion $\mathcal{O}_{X,x}\hookrightarrow \mathcal{O}_{\nu}$, then such point $x$ is called the \textit{center} of $\nu$ and we denote it by $c_X(\nu)\coloneqq x$. If the center $c_X(\nu)$ is the generic point of $X$, then $\nu$ is called a \textit{trivial valuation} of $X$.  We denote by $\Val_{X}$ (resp. $\Val^{\ast}_X$) the set of all (resp. nontrivial) valuations on $K(X)$ having centers in $X$. We denote by $\mathrm{Val}_{X,x}\subseteq\mathrm{Val}_{X}$ the subset of all valuations of $X$ whose center is the point $x$ of $X$.

We consider $\Val_X$ as the topological space equipped with the following topology. Let $\mathcal{I}$ be a set of nonzero ideals on $X$. To a valuation $\nu\in \Val_X$ and a coherent ideal sheaf $\mf{a}$ on $X$, one can associate a function $\nu\colon \mathcal{I}\rightarrow \R_{\geq 0}$ by defining $\nu(\mf{a})\coloneqq \min\{\nu(f) \mid f\in \mf{a}\cdot \mathcal{O}_{X,x}\}$, where $x=c_X(\nu)$. This function can be seen as a homomorphism between semirings and endows a topology $\tau$ on $\Val_X$, i.e., $\tau$ is the weakest topology on $\Val_X$ such that the evaluation map $\nu \to \nu(\mf{a})$ is continuous for all nonzero ideals $\mf{a}$ on $X$. See \cite[Lemma 4.1]{JM} for another characterization of the topology $\tau$ on $\Val_X$.

For two valuations $\nu_1, \nu_2\in \Val_X$, we write $\nu_1\leq \nu_2$ if $\nu_1(\mf{a})\leq \nu_2(\mf{a})$ for all nonzero ideals $\mf{a}$ on $X$.

For a valuation $\nu\in \Val^\ast_{X}$ and $\lambda\in \R_{\ge0}$, we define the \textit{valuation ideal sheaf} $\mf{a}_{\lambda}(\nu)$ by
$$\mf{a}_{\lambda}(\nu)\coloneqq \{f\in \mc{O}_{X} \mid \nu(f)\ge \lambda\}.
$$
Note that $\mf{a}_{\bullet}(\nu)\coloneqq \{\mf{a}_{m}(\nu)\}_{m\in \Z_{\ge0}}$ is a graded sequence of ideals.

Now we recall the notion of \emph{quasi-monomial valuations}.
Suppose that $f\colon Y\to X$ is a projective birational morphism from a smooth projective variety $Y$ and $E\coloneqq \sum_{i\in I}E_i$ is a simple normal crossing divisor on $Y$. We further assume that $f$ is an isomorphism outside of $\Supp E$. Such pair $(Y,E)$ is called a \emph{log-smooth pair} over $X$. Let $V$ be an irreducible component $V=\bigcap_{i\in I'}E_i(\neq\emptyset)$ for some $I'\subseteq I$, and $\eta$ be the generic point of $V$. By the Cohen's structure theorem (cf. \cite[Theorem 032A]{Stacks}), there exists a system of algebraic coordinates $z_i$ at $\eta$ such that
$ \widehat{\mathcal{O}_{Y,\eta}}\cong \C(\eta)[[z_1,\cdots,z_r]].$
Note that any $f\in\mathcal{O}_{Y,\eta}$ can be written as $f=\sum\limits_{\beta\in \Z^r_{\ge 0}} a_{\beta} z^{\beta}$ for some $a_{\beta}\in\C(\eta)$ which is either zero or a unit. Here, we use the notation $z^{\beta}=z_1^{\beta_1}z_2^{\beta_2}\cdots z_r^{\beta_r}$ for  $\beta=(\beta_1,\cdots,\beta_r)\in\Z^r_{\ge 0}$.
For each $\alpha\coloneqq (\alpha_1,\cdots,\alpha_r)\in \R^r_{\ge 0}$ and each choice of the point $\eta$, we can define a valuation $\nu_{\alpha,\eta}$ on $\widehat{\mathcal{O}_{Y,\eta}}$ as
\begin{align*}
\nu_{\alpha,\eta}\left(f\right)\coloneqq \min\left\{\sum^r_{i=1}\alpha_i\beta_i\biggm|a_{\beta}\ne 0\right\}.
\end{align*}
Any valuation in $\Val_X$ of the form $\nu_{\alpha,\eta}$ for some log-smooth pair $(Y,E)$ over $X$ is called a \emph{quasi-monomial valuation} (at $\eta$ with respect to $\alpha\in\R^n_{\geq0}$). Note that if there is a real number $c>0$ such that $c\alpha\in\Z^r_{\geq0}$, then $\nu_{\alpha,\eta}$ is nothing but a divisorial valuation.

We say the log-smooth pair $(Y,E)$ over $X$ is \emph{adapted to a quasi-monomial valuation $\nu$} if $\nu$ is of the form $\nu_{\alpha,\eta}$ described as above. Let $\QM(Y,E)$ be the collection of the quasi-monomial valuations adapted by $(Y,E)$. Then any quasi-monomial valuation of $X$ belongs to $\QM(Y,E)$ for some log-smooth pair $(Y,E)$ over $X$. We fix the topology on $\mathrm{QM}(Y,E)\subseteq \mathrm{Val}_X$ as the subspace topology of $\tau$. It is known that there exists a \emph{retraction map} $r_{(Y,E)}\colon \Val_X\to \QM(Y,E)$ which is continuous and identity on $\QM(Y,E)$.

We next extend the definition of the log discrepancy function $A_{X,\Delta}$ for arbitrary valuations in $\Val_X$ as follows. Let $(Y,E=\sum_{i\in I}E_i)$ be a log-smooth pair of $X$. Let $\nu=\nu_{\alpha,\eta}$ be a quasi-monomial valuation in $\QM(Y,E)$ associated to some $\alpha=(\alpha_{1},\dots,\alpha_{r})\in \R_{\ge0}^{r}$ and a generic point of an irreducible component of $\cap_{j\in I'}E_j$ with $I'\subseteq I$ such that $|I'|=r$. Then the log discrepancy $A_{X,\Delta}(\nu)$ of $\nu$ with respect to $(X,\Delta)$ is defined as
$$A_{X,\Delta}(\nu)\coloneqq \sum_{j\in I'}\alpha_{j}A_{X,\Delta}(E_{j}).$$
We further define the log discrepancy $A_{X,\Delta}(\nu)$ for an arbitrary valuation $\nu\in\mathrm{Val}_X$ with respect to $(X,\Delta)$ as
$$A_{X,\Delta}(\nu)\coloneqq \sup_{(Y,E)}A_{X,\Delta}(r_{(Y,E)}(\nu)),$$
where $r_{(Y,E)}\colon \Val_X\to\QM(Y,E)$ is the retraction map and the sup is taken over all log-smooth pairs $(Y,E)$ over $X$.

\subsection{Dual complex}\label{dual complex}
In this subsection, we recall the notion of \textit{dual complex} from \cite{dFKX}.

Let $(X,\Delta)$ be a pair and $\left(Y,E\coloneqq \sum_{i=1}^{r}E_i\right)$ a log-smooth model of $(X,\Delta)$. The \textit{dual complex} $\mathcal{D}(E)$ associated to $E$ is a regular cell complex defined as follows: Each vertex of $\mathcal{D}(E)$ corresponds to $E_i$ and each cell corresponds to an irreducible component of $\bigcap\limits_{i\in J}E_i (\neq \emptyset)$ with some $J=\{i_1,\cdots,i_l\}\subseteq I=\{1,\cdots,r\}$. Let us write the cell associated to $J$ by

$$ W_J\coloneqq \left\{(\alpha_{i_1},\cdots,\alpha_{i_{\ell}})\in\R^{\ell}_{>0}\left|\;\sum^{\ell}_{j=1} \alpha_{i_j}=1\right.\right\}.$$

We can identify $\mathcal{D}(E)$ as a subset of $\mathrm{Val}_X$ as follows. For any $\alpha=(\alpha_{i_1},\cdots,\alpha_{i_r})\in W_J$, we can define a quasi-monomial valuation $\imath_{X,\Delta}(\alpha):=\nu_{\alpha'}$ associated to $\alpha'=(\alpha'_1,\alpha'_2,\cdots,\alpha'_r)\in\R^r_{\geq0}$ where $\alpha'_{j}=\alpha_{i_j}A_{X,\Delta}(E_{i_j})$. Then this
 defines a natural injection $\imath_{X,\Delta}\colon \mathcal{D}(E)\to \mathrm{Val}_{X}$ (cf. \cite{dFKX}). Therefore, we can consider $\mathcal{D}(E)\subseteq \mathrm{Val}_{X}$ as a subset of quasi-monomial valuations. In particular, we can identify $\mathcal{D}(E)$ with the set of quasi-monomial valuations $\nu\in \mathrm{QM}(Y,E)$ with $A_{X,\Delta}(\nu)=1$.

Let $V$ be a smooth variety and $Y$ a smooth scheme that is a fiberwise log resolution of $X\times V\to V$. Let $D'$ be an effective divisor on $X\times V$ and $c>0$. Suppose that $E_1,\cdots,E_r$ are log canonical places of $(X\times V,\Delta\times V+cD')$. Then for any $u\in V$, the reductions $(E_1)_u,\cdots,(E_r)_u$ give all the prime divisors on $Y_u$ that are log canonical places of $(X,\Delta+cD'_u)$. Moreover, we can identify $\mathcal{D}\left(E\coloneqq \sum\limits_{i=1}^r E_i\right)$ and $\mathcal{D}(E_u)$. For more details on the dual complexes, see \cite{dFKX}.

\subsection{Asymptotic order}\label{subsec:asympt val}
Let $D$ be an effective $\Q$-Cartier divisor and $\mf{a}_{m}=\mf{b}(|mD|)$ the base ideal of $|mD|$. If $mD$ is not Cartier, then we set $\mf{a}_m=(0)$. For a valuation $\nu\in \Val_{X}$, the \emph{asymptotic order of vanishing} $\nu(\|D\|)$ of $D$ along $\nu$ is defined by
$$
\nu(\|D\|)\coloneqq  \inf_{m}\frac{\nu(\mf{a}_{m})}{m}.
$$

For a pseudoeffective $\Q$-Cartier divisor $D$, we define the asymptotic valuation $\sigma_{\nu}(D)$ of $D$ along $\nu$ by
$$\sigma_{\nu}(D)\coloneqq  \lim_{\varepsilon\to 0^{+}}\nu(\|D+\varepsilon A\|),
$$
where $A$ is an ample divisor. Note that the limit does not depend on the choice of $A$ by \cite[Theorem 2]{Jow}. Moreover, it is known that if $D$ is big, then the equality $\sigma_{\nu}(D)=\nu(\|D\|)$ holds.

Below, we identify $\mathcal{D}(E)$ with the set of quasi-monomial valuations $\nu\in\textrm{QM}(X,E)$ such that $A_{X,\Delta}(\nu)=1$.

\begin{proposition}\label{prop:continuity}
Let $(X,\Delta)$ be a projective pair, $D$ a pseudoeffective $\Q$-Cartier divisor, and $(Y,E)$ a log smooth model of $(X,\Delta)$. Then the function $\mc{D}(E)\to \R, \nu\mapsto \sigma_{\nu}(D)$ is Lipschitz continuous.
\end{proposition}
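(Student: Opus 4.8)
The plan is to localize the statement to the individual cells of $\mathcal{D}(E)$. Since $\mathcal{D}(E)$ is a finite, connected regular cell complex glued from finitely many closed simplices along common faces, it is enough to produce a constant $L$ such that $\sigma_{\bullet}(D)$ restricted to each closed cell is $L$-Lipschitz; the continuity of $\nu\mapsto\sigma_{\nu}(D)$ on all of $\mathcal{D}(E)$, which is \cite[Theorem B]{BFJ}, then upgrades this to Lipschitz continuity on $\mathcal{D}(E)$ with a constant depending only on $L$ and the combinatorics of $E$. Fix, then, the cell attached to a stratum $V=\bigcap_{i\in J}E_{i}$ with $|J|=k$, let $\eta$ be the generic point of $V$, and parametrize the cell by $\alpha$ in the standard simplex $\Sigma=\{\alpha\in\R^{k}_{\ge0}:\sum_{i}\alpha_{i}=1\}$ via $\alpha\mapsto\nu_{\alpha}:=\imath_{X,\Delta}(\alpha)\in\QM(Y,E)$. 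We must bound $|\sigma_{\nu_{\alpha}}(D)-\sigma_{\nu_{\beta}}(D)|$ by $L\|\alpha-\beta\|$ uniformly in $\alpha,\beta\in\Sigma$ and in the cell.

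The first move is to reduce to the case that $D$ is big. Because $\sigma_{\nu}(A)=0$ and $\sigma_{\nu}$ is subadditive, the function $\varepsilon\mapsto\sigma_{\nu_{\alpha}}(D+\varepsilon A)$ is nondecreasing as $\varepsilon\to0^{+}$, so $\sigma_{\nu_{\alpha}}(D)=\sup_{0<\varepsilon\le1}\sigma_{\nu_{\alpha}}(D+\varepsilon A)$, a pointwise supremum over the big classes $D+\varepsilon A$. As a supremum of $L$-Lipschitz functions is $L$-Lipschitz, it suffices to bound the Lipschitz constant of $\alpha\mapsto\sigma_{\nu_{\alpha}}(D+\varepsilon A)=\nu_{\alpha}(\|D+\varepsilon A\|)$ by a constant independent of $\varepsilon\in(0,1]$. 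So assume $D$ big and write $\mathfrak{a}_{m}=\mathfrak{b}(|mD|)$, so that $\sigma_{\nu_{\alpha}}(D)=\inf_{m}\tfrac1m\nu_{\alpha}(\mathfrak{a}_{m})$. In algebraic coordinates $z_{1},\dots,z_{k}$ at $\eta$ adapted to $E$, the function $\alpha\mapsto\tfrac1m\nu_{\alpha}(\mathfrak{a}_{m})$ is the minimum of the linear functionals $\alpha\mapsto\tfrac1m\langle\alpha,\gamma\rangle$ as $\gamma$ runs over the exponents on the Newton boundary of $\mathfrak{a}_{m}$ at $\eta$; in particular it is concave (so its infimum over $m$ is concave, hence locally Lipschitz on the relative interior of $\Sigma$), and its Lipschitz constant on $\Sigma$ is at most $\tfrac1m\max\|\gamma\|$, the maximum taken over those exponents $\gamma$ that minimize $\langle\alpha,\cdot\rangle$ over the Newton polyhedron for some $\alpha\in\Sigma$.

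The crux---and the step I expect to be the main obstacle---is then the following uniform estimate: there is a constant $C$, depending only on $D$ and $(Y,E)$ and chosen uniformly as $D$ ranges over $\{D+\varepsilon A:0<\varepsilon\le1\}$, such that every such minimizing exponent $\gamma$ of $\mathfrak{a}_{m}$, at every stratum $\eta$, satisfies $\|\gamma\|\le Cm$. Granting it, each $\tfrac1m\nu_{\alpha}(\mathfrak{a}_{m})$ is $C$-Lipschitz on $\Sigma$, hence so is the infimum $\sigma_{\nu_{\alpha}}(D)$, with $C$ independent of $\varepsilon$; together with the reductions above this proves the proposition. To establish the estimate I would fix an effective divisor $\Delta_{1}\in|m_{0}D|$ and use that $\mathfrak{a}_{m}$ contains a power of the local equation of $\mu^{\ast}\Delta_{1}$ at $\eta$; restricting $\mu^{\ast}\Delta_{1}$ to the coordinate strata $\{z_{i}=0:i\notin S\}$ through $\eta$ bounds $\nu_{\alpha}(\mathfrak{a}_{m})$ and, more delicately, the coordinates of the minimizing Newton vertices, linearly in $m$ with constants governed by the multiplicities of $\Delta_{1}$ along those strata; here the finiteness of the set of divisors $E'$ with $\sigma_{E'}(D)>0$ from \cite[Corollary III.1.11]{Nak} (after passing to a high enough model) keeps the relevant data finite, and a $\sigma$-comparison between $D+\varepsilon A$, $D+A$ and $D$ keeps the constants bounded uniformly in $\varepsilon$. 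The boundary behaviour is precisely the delicate point: on the relative interior of $\Sigma$ concavity already yields local Lipschitz control, but near a face of $\Sigma$ a merely concave, bounded, continuous function need not be Lipschitz, so one genuinely needs the linear-in-$m$ bound on the Newton polyhedra of the base ideals, which is where the positivity of $D$, through the effective divisor $\Delta_{1}$, enters.
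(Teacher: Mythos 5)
Your overall scaffolding is sound and runs parallel to what the paper needs: writing $\sigma_{\nu}(D)=\sup_{0<\varepsilon\le 1}\nu(\norm{D+\varepsilon A})$ and demanding a Lipschitz constant uniform in $\varepsilon$, working cell by cell, and viewing $\alpha\mapsto\frac1m\nu_{\alpha}(\mathfrak{a}_m)$ as a minimum of linear functionals over the Newton polyhedron are all correct reductions. But the step you yourself flag as the crux --- that every minimizing exponent $\gamma$ of $\mathfrak{a}_m=\mathfrak{b}(|mD|)$ at every stratum satisfies $\|\gamma\|\le Cm$ with $C$ uniform in $m$ and in $\varepsilon$ --- is exactly the nontrivial content of the quantitative Izumi-type estimate, and it is not proved in your proposal: you write ``granting it'' and then only sketch how you ``would'' obtain it. The paper does not prove it either; its entire proof consists of quoting \cite[Theorem B]{BFJ}, which supplies the Lipschitz bound for $\nu\mapsto\nu(\norm{D+\tfrac1\ell A})$ on $\mathcal{D}(E)$ with an explicit constant $M_1\cdot\nu_0(\norm{D+\tfrac1\ell A})+M_2\cdot\max_J|(D+\tfrac1\ell A)\cdot H^{\dim X-|J|-1}\cdot E_J|$, visibly bounded uniformly in $\ell$, and then lets $\ell\to\infty$. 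So what you have left open is precisely the theorem being invoked.

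Moreover, the sketch you give does not close it. Fixing $\Delta_1\in|m_0D|$ shows the Newton polyhedron of $\mathfrak{a}_m$ at $\eta$ contains the single point $\tfrac{m}{m_0}\bigl(\mult_{E_i}(\mu^{*}\Delta_1)\bigr)_i$; this bounds the values $\nu_{\alpha}(\mathfrak{a}_m)$ and the numbers $\ord_{E_i}(\mathfrak{a}_m)$ linearly in $m$, but it does not bound the minimizing vertices near the faces of the simplex. For instance, for the monomial ideal $\mathfrak{a}=(z_1^{a},z_2^{b})$ the function $\alpha\mapsto\nu_{\alpha}(\mathfrak{a})=\min(a\alpha_1,b\alpha_2)$ vanishes at both vertices of the simplex yet has Lipschitz constant $\max(a,b)$, so value bounds alone give no control of the slopes at the boundary --- which is exactly the regime you admit is delicate. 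Controlling those slopes is what the two terms in the BFJ constant do (a uniform Izumi comparison $M_1\nu_0(\cdot)$ plus the intersection numbers $D\cdot H^{\dim X-|J|-1}\cdot E_J$, which bound multiplicities of generic members of $|mD+\ldots|$ along the strata $E_J$). Note also that your strategy asks for more than is needed: you require each term $\frac1m\nu_{\alpha}(\mathfrak{a}_m)$ to be uniformly Lipschitz, whereas a priori only the infimum has to be; establishing the per-$m$ bound is essentially equivalent to reproving \cite[Theorem B]{BFJ}. As written, then, the proposal has a genuine gap at its main step; either cite that theorem (as the paper does) or supply the Izumi-type argument in full.
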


\begin{proof}
By \cite[Theorem B]{BFJ}, there exist constants $M_1,M_2>0$ and a valuation $\nu_0\in \mathrm{Val}_X$ such that
\begin{align*}
&|\nu(\norm{D+\tfrac{1}{\ell}A})-\omega(\norm{D+\tfrac{1}{\ell}A})|\\
&\le \left\{M_1\cdot \nu_{0}(\norm{D+\tfrac{1}{\ell}A})+M_2\cdot\max_{J}|(D+\tfrac{1}{\ell}A)\cdot H^{\dim X-|J|-1}\cdot E_{J}|\right\}\norm{\nu-\omega}.
\end{align*}
Note that the constants $M_1,M_2$ depend only on $X,H$, and the metric $\norm{\cdot}$ on $\mc{D}(E)$. By letting $\ell\to \infty$, we have
\begin{align*}
|\sigma_{\nu}(D)-\sigma_{\omega}(D)|\le L\cdot \norm{\nu-\omega},
\end{align*}
where $L:=M_1\cdot \sigma_{\nu_{0}}(D)+M_2\cdot\max_{J}|D\cdot H^{\dim X-|J|-1}\cdot E_{J}|$ is the Lipschitz constant. This completes the proof.
\end{proof}

Next, we prove that the (asymptotic) valuations are invariant under base field extension.

\begin{proposition}\label{prop:extension}
Let $X$ be a normal variety over $\C$ with a (not necessarily closed) point $x\in X$. Suppose $\C\subseteq K$ is a field extension, and denote by base changes $X_K:=X\times_{\C} K, x_K:=x\times_{\C} K$. Let $\omega\in \mathrm{Val}_{X_K,x_K}$ be a valuation, and $\nu$ its restriction to $K(X)$. Let $\mathfrak{a}$ be an ideal in $\mathcal{O}_{X,x}$, and denote by $\mathfrak{a}_K:=\mathfrak{a}\otimes_{\C}K$. If the center of $\nu$ is $x$, then $$ \nu(\mathfrak{a})=\omega(\mathfrak{a}_K)$$ holds.
\end{proposition}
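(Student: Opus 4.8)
The plan is to unwind both sides of the asserted equality directly from the definition of $\nu(\mathfrak a)$ recalled in Subsection~\ref{valuation}, using nothing beyond the ultrametric inequality together with the fact that a valuation is nonnegative on the local ring at its center. Write $y:=c_{X_K}(\omega)$; since $\omega\in\mathrm{Val}_{X_K,x_K}$ (equivalently $c_X(\nu)=x$), the point $y$ lies over $x$, so there are local homomorphisms $\mathcal O_{X,x}\to\mathcal O_{X_K,y}\hookrightarrow\mathcal O_\omega$, and $\mathfrak a_K\cdot\mathcal O_{X_K,y}$ is precisely the ideal of $\mathcal O_{X_K,y}$ generated by the image of $\mathfrak a$. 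By definition $\nu(\mathfrak a)=\min\{\nu(f)\mid f\in\mathfrak a\cdot\mathcal O_{X,x}\}$ and $\omega(\mathfrak a_K)=\min\{\omega(g)\mid g\in\mathfrak a_K\cdot\mathcal O_{X_K,y}\}$. Since $\nu=\omega|_{K(X)}$ and $\mathfrak a\cdot\mathcal O_{X,x}$ maps into $\mathfrak a_K\cdot\mathcal O_{X_K,y}$, applying $\omega$ to an element attaining $\nu(\mathfrak a)$ immediately gives $\omega(\mathfrak a_K)\le\nu(\mathfrak a)$.

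For the reverse inequality I would first observe that $\omega$ is trivial on $K$: as $y$ is a point of the $K$-scheme $X_K$ we have $K\subseteq\mathcal O_{X_K,y}\subseteq\mathcal O_\omega$, hence $\omega(\lambda)=0$ for every $\lambda\in K^\times$; in particular $\omega(b)\ge 0$ for all $b\in\mathcal O_{X_K,y}$. Now pick finitely many generators $f_1,\dots,f_n$ of $\mathfrak a$ in the Noetherian local ring $\mathcal O_{X,x}$; then $\mathfrak a_K\cdot\mathcal O_{X_K,y}$ is generated by $f_1,\dots,f_n$, so any $g$ in it has the form $g=\sum_j f_j b_j$ with $b_j\in\mathcal O_{X_K,y}$, and the ultrametric inequality gives
\[
\omega(g)\ \ge\ \min_{j}\bigl(\omega(f_j)+\omega(b_j)\bigr)\ \ge\ \min_j\omega(f_j)\ =\ \min_j\nu(f_j)\ \ge\ \nu(\mathfrak a),
\]
using $\omega(b_j)\ge 0$, $\omega|_{K(X)}=\nu$, and $f_j\in\mathfrak a\cdot\mathcal O_{X,x}$. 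Taking the minimum over such $g$ yields $\omega(\mathfrak a_K)\ge\nu(\mathfrak a)$, and combining the two inequalities proves the proposition.

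I do not expect a genuine obstacle here; the statement is a formal consequence of the definitions, and the only points deserving attention are bookkeeping ones: that $X_K$ is again integral (so that $K(X_K)$, and hence the restriction $\nu$, are meaningful), which holds because $X$ is geometrically integral over the algebraically closed field $\C$; that $\mathcal O_{X,x}\to\mathcal O_{X_K,y}$ is local, so that $\mathfrak a_K\cdot\mathcal O_{X_K,y}$ is generated by the images of a finite generating set of $\mathfrak a$; and that the two minima defining $\nu(\mathfrak a)$ and $\omega(\mathfrak a_K)$ are attained, which is immediate from finite generation and the ultrametric inequality. If one instead reads $\mathcal O_{X_K,x_K}$ as the (possibly semilocal) ring $\mathcal O_{X,x}\otimes_{\C}K$, the identical computation applies once one observes $\mathcal O_{X,x}\otimes_{\C}K\subseteq\mathcal O_\omega$, which follows from $\mathcal O_{X,x}\subseteq\mathcal O_\omega$ and the triviality of $\omega$ on $K$.
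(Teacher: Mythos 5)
Your proposal is correct and follows essentially the same route as the paper: both directions are obtained by unwinding the definition of $\nu(\mathfrak a)$ and $\omega(\mathfrak a_K)$, writing an element of $\mathfrak a_K$ as a finite combination of elements of $\mathfrak a$ (the paper uses $K$-coefficients $\sum a_i\otimes f_i$, you allow coefficients in $\mathcal O_{X_K,y}$, which is the same estimate via the ultrametric inequality and nonnegativity of $\omega$ on the local ring at its center), and applying $\omega$ to $1\otimes f$ for the reverse inequality. Your extra remarks on triviality of $\omega$ on $K$ and the possibly semilocal nature of $\mathcal O_{X,x}\otimes_\C K$ only make explicit what the paper leaves implicit.
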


\begin{proof}
Let $f\in \left(\mathfrak{a}_K\right)_{x_K}$ be a local section. Then there are $a_1,\cdots,a_L\in K$ and $f_1,\cdots,f_L\in \mathfrak{a}_x$ such that
$$ f=\sum^L_{i=1}a_i\otimes f_i.$$
Therefore,
$$ \omega(f)\ge \min_{i=1,\cdots,L}\nu(f_i)\ge \nu(\mathfrak{a}),$$
and hence $\omega(\mathfrak{a})\ge \nu(\mathfrak{a})$. On the other hand, for any $f\in \mathfrak{a}$,
$$ \nu(f)=\omega(1\otimes f)\ge \omega(\mathfrak{a}),$$
and thus $\nu(\mathfrak{a})\ge \omega(\mathfrak{a})$
\end{proof}

\begin{corollary}\label{cor:extension}
Let $X$ be a normal projective variety over $\C$, $D$ a pseudo-effective Cartier divisor on $X$, and let $x\in X$ be a (not necessarily closed) point. Suppose $\C\subseteq K$ is a field and denote the base changes by $X_K:=X\times_{\C} K$, $D_K:=D\times_{\C}K$ and $x_K:=x\times_{\C} K$. Suppose $\omega\in \mathrm{Val}_{X_K,x_K}$ is a valuation and let $\nu$ be the restriction to $K(X)$. If the center of $\nu$ is $x$, then
$$ \sigma_{\nu}(D)=\sigma_{\omega}(D_K).$$
\end{corollary}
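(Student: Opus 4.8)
The plan is to deduce Corollary~\ref{cor:extension} from Proposition~\ref{prop:extension} by unwinding the definition of $\sigma$ through the two limiting processes that define it (the asymptotic order of vanishing, and then the perturbation by an ample divisor). First I would reduce to the big case: fix an ample Cartier divisor $A$ on $X$, so that $A_K$ is ample on $X_K$, and observe that $(D+\varepsilon A)_K = D_K + \varepsilon A_K$; since
$$\sigma_\nu(D) = \lim_{\varepsilon\to 0^+}\nu(\|D+\varepsilon A\|)\quad\text{and}\quad \sigma_\omega(D_K) = \lim_{\varepsilon\to 0^+}\omega(\|D_K+\varepsilon A_K\|),$$
it suffices to prove $\nu(\|B\|) = \omega(\|B_K\|)$ for $B$ a big $\Q$-Cartier divisor on $X$ (taking $B = D+\varepsilon A$ for rational $\varepsilon>0$, which is enough to pass to the limit).

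Next I would handle the big case by comparing base ideals. For $m$ with $mB$ Cartier, let $\mf{a}_m = \mf{b}(|mB|)\subseteq\mc{O}_X$. The key commutation is that base ideals commute with flat base change: $\mf{b}(|m B_K|) = \mf{b}(|mB|)\otimes_\C K = (\mf{a}_m)_K$, which holds because $H^0(X_K,\mc{O}_{X_K}(mB_K)) = H^0(X,\mc{O}_X(mB))\otimes_\C K$ by flatness of $\C\subseteq K$ and compatibility of cohomology with flat base change, so a $K$-basis of global sections is obtained from a $\C$-basis, and the ideal they generate is the extension of $\mf{a}_m$. Given this, Proposition~\ref{prop:extension} applied to the ideal $\mf{a}_m$ gives $\omega((\mf{a}_m)_K) = \nu(\mf{a}_m)$ for every $m$ (here we use that the center of $\nu$ is $x$, and that the center of $\omega$ is $x_K$ lying over $x$, so both evaluate the ideal at the relevant local ring). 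Dividing by $m$ and taking the infimum over $m$ yields $\omega(\|B_K\|) = \nu(\|B\|)$, as wanted.

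Finally, I would combine the two steps: applying the big case to $B = D + \varepsilon A$ for each rational $\varepsilon > 0$ gives $\nu(\|D+\varepsilon A\|) = \omega(\|D_K + \varepsilon A_K\|)$, and letting $\varepsilon\to 0^+$ along rational values (the limit defining $\sigma$ exists and is independent of $A$ by \cite[Theorem 2]{Jow}, applied on both $X$ and $X_K$) gives $\sigma_\nu(D) = \sigma_\omega(D_K)$.

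I expect the main obstacle to be the flat-base-change identity $\mf{b}(|mB_K|) = \mf{b}(|mB|)\otimes_\C K$: one must be slightly careful that the base ideal is defined as the image of the evaluation map $H^0(X,\mc{O}_X(mB))\otimes \mc{O}_X(-mB)\to \mc{O}_X$, and that forming this image commutes with the (flat, hence exact) functor $-\otimes_\C K$, together with the identification $H^0(X_K, \mc{O}_{X_K}(mB_K)) \cong H^0(X,\mc{O}_X(mB))\otimes_\C K$. Once this sheaf-theoretic point is in place, everything else is a formal manipulation of infima and limits using Proposition~\ref{prop:extension}. A minor point to record is the degenerate case where $mB$ is not Cartier, where both base ideals are $(0)$ by convention and the identity is trivial; and one should note $B_K$ is again big (ampleness of $A_K$, or more simply bigness is preserved under field extension), so the equality $\sigma = \nu(\|\cdot\|)$ is available on $X_K$ as well.
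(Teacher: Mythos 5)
Your proposal is correct and follows essentially the same route as the paper: reduce to the big case via the perturbation $D+\varepsilon A$ defining $\sigma$, identify the base ideals under the field extension using flat base change ($H^0$ and the image of the evaluation map commute with $-\otimes_\C K$), and then apply Proposition \ref{prop:extension} to those ideals before passing to the infimum and the limit $\varepsilon\to 0^+$. The paper's own proof is just a terser version of exactly this argument (citing the flat base change theorem and Proposition \ref{prop:extension} in one line), so no further comparison is needed.
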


\begin{proof}
Note that it suffices to show that $\sigma_{\nu}(D)=\sigma_{\omega}(D_K)$ for any big divisor $D$. Let $D$ be a big divisor on $X$, $n$ a positive integer such that $H^0(X,nD)\ne 0$, $\mathfrak{a}_m:=\mathfrak{b}(|mnD|),$
and
$\mathfrak{a}'_m:=\mathfrak{b}(|mnD_K|).$
Then by Proposition \ref{prop:extension} and the flat base change theorem (cf. \cite[Lemma 02KH]{Stacks}), we have
$$ \nu(\|D\|)=\nu(\mathfrak{a}_{\bullet})=\nu((\mathfrak{a}_{\bullet})_x)=\omega(((\mathfrak{a}'_{\bullet})_x)_K)=\omega(\|D'_K\|),$$
and thus $\sigma_{\nu}(D)=\sigma_{\omega}(D_K)$.
\end{proof}

\subsection{Log canonical threshold}
We recall the definition of \emph{log canonical thresholds} and related results.

\begin{definition}\hfill
\begin{enumerate}[(1), leftmargin=8mm]
\item Let $X$ be a normal variety and $\mathfrak{a},\mathfrak{q}$ ideals of $\mathcal{O}_X$. The \emph{log canonical threshold} $\lct^{\mathfrak{q}}(X,\Delta,\mathfrak{a})$ is defined as
$$ \lct^{\mathfrak{q}}(X,\Delta,\mathfrak{a})\coloneqq \inf_{\nu\in \mathrm{Val}^*_X}\frac{A_{X,\Delta}(\nu)+\nu(\mathfrak{q})}{\nu(\mathfrak{a})}.$$
We say a valuation $\nu_0\in \mathrm{Val}^*_X$ \emph{computes} $\lct^{\mathfrak{q}}(X,\Delta,\mathfrak{a})$ if
$$ \lct^{\mathfrak{q}}(X,\Delta,\mathfrak{a})=\frac{A_{X,\Delta}(\nu_{0})+\nu_0(\mathfrak{q})}{\nu_0(\mathfrak{a})}.$$
When $\mathfrak{q}=\mathcal{O}_X$, we drop ``$\mathfrak{q}$'' in the notation.

\item Let $X$ be a normal variety, $\mathfrak{q}$ an ideal of $\mathcal{O}_X$ and $\mathfrak{a}_{\bullet}$ a graded sequence of ideals of $\mathcal{O}_X$. The \emph{log canonical threshold} $\lct^{\mathfrak{q}}(X,\Delta,\mathfrak{a}_{\bullet})$ is defined as
$$\lct^{\mathfrak{q}}(X,\Delta,\mathfrak{a}_{\bullet})\coloneqq  \inf_{\nu\in \mathrm{Val}^*_X}\frac{A_{X,\Delta}(\nu)+\nu(\mathfrak{q})}{\nu(\mathfrak{a}_{\bullet})}.$$
We say a valuation $\nu_0\in \mathrm{Val}^*_X$ \emph{computes} $\lct^{\mathfrak{q}}(X,\Delta,\mathfrak{a}_{\bullet})$ if
$$ \lct^{\mathfrak{q}}(X,\Delta,\mathfrak{a}_{\bullet})=\frac{A_{X,\Delta}(\nu_{0})+\nu_0(\mathfrak{q})}{\nu_0(\mathfrak{a}_{\bullet})}.$$
When $\mathfrak{q}=\mathcal{O}_X$, we drop ``$\mathfrak{q}$'' in the notation.

\item Let $(X,\Delta,D)$ be a potential triple. The \emph{log canonical threshold} $\lct_{\sigma}(X,\Delta,D)$ of $(X,\Delta,D)$ is defined as
$$ \lct_{\sigma}(X,\Delta,D)\coloneqq  \inf_{\nu\in \mathrm{Val}^*_X}\frac{A_{X,\Delta}(\nu)}{\sigma_{\nu}(D)}.$$
We say a valuation $\nu_0\in \mathrm{Val}^*_X$ \emph{computes} $\lct_{\sigma}(X,\Delta,D)$ if
$$ \lct_{\sigma}(X,\Delta,D)=\frac{A_{X,\Delta}(\nu_{0})}{\sigma_{\nu_0}(D)}.$$
\end{enumerate}
\end{definition}

\medskip
\noindent We can also formulate the similar conjectures for potential triples $(X,\Delta,D)$.
The following is the main result of \cite{Xu2} which gives an answer to Weak conjecture.

\begin{theorem}[{cf. \cite[Theorem 1.1]{Xu2}}] \label{thm:valuation quasi-monomial}
Let $(X,\Delta)$ be a klt pair and $\mathfrak{a}_{\bullet}$ a graded sequence of ideals of $\mathcal{O}_X$ such that $\lct(X,\Delta,\mathfrak{a}_\bullet)<\infty$. Then there exists a quasi-monomial valuation $\omega$ which computes $\lct(X,\Delta,\mathfrak{a}_\bullet)$.
\end{theorem}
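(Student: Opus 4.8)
The plan is to realize a quasi-monomial valuation as a limit point of a minimizing sequence which --- after a hard reduction --- lies inside the quasi-monomial locus of a single log-smooth model; once that reduction is in hand, taking the limit is essentially formal. Put $c:=\lct(X,\Delta,\mathfrak{a}_\bullet)<\infty$ and choose $\nu_i\in\mathrm{Val}^*_X$ with $\nu_i(\mathfrak{a}_\bullet)=1$ (possible since $\nu_i(\mathfrak{a}_\bullet)>0$) and $A_{X,\Delta}(\nu_i)\to c$; then $A_{X,\Delta}(\nu_i)\le C:=c+1$ for $i\gg 0$, and since $\inf_m\nu_i(\mathfrak{a}_m)/m=\nu_i(\mathfrak{a}_\bullet)=1$ we get $\nu_i(\mathfrak{a}_m)\ge m$ for every $m$ and every $i$. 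After passing to a subsequence and a Noetherian-type reduction on the centers, we may assume all $\nu_i$ share one center $x$, and, localizing at $x$, that $(X,\Delta)$ is a klt local scheme with closed point $x$ and $\nu_i\in\mathrm{Val}_{X,x}$ for all $i$.

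The heart of the proof is the reduction to a single model, which I would carry out as in \cite{Xu2}. First, using \cite{JM} together with the extraction theorem of \cite{BCHM} and a relative minimal model program over $X$, one replaces the $\nu_i$ by (positive rescalings of) divisorial valuations $\ord_{S_i}$, where each $S_i$ is a Koll\'ar component of the \emph{fixed} klt singularity $x\in(X,\Delta)$, keeping $A_{X,\Delta}(\nu_i)\to c$, $\nu_i(\mathfrak{a}_\bullet)=1$ and $\nu_i(\mathfrak{a}_m)\ge m$. Second --- and this is where the weight lies --- the Koll\'ar components over the fixed singularity $x\in(X,\Delta)$ with log discrepancy $\le C$ form a \emph{bounded} family: there are a finite-type $\C$-scheme $V$, a projective family over $V$, and a relative boundary whose fibers realize all of them; this boundedness rests on the minimal model program and Birkar's boundedness of complements. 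Spreading out, taking a fiberwise log resolution, and specializing to a closed point of $V$ --- a base field extension $\C\subseteq K$ appears along the way but affects neither the log discrepancies (a standard fact) nor the values $\nu(\mathfrak{a}_m)$ (Proposition~\ref{prop:extension}) --- one obtains a single log-smooth pair $(Y,E)$ over $X$ with $\nu_i\in\mathrm{QM}(Y,E)$ for all $i\gg 0$.

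The endgame is then soft. The sublevel set $\{\omega\in\mathrm{QM}(Y,E):A_{X,\Delta}(\omega)\le C\}$ is compact, because on each cell of $\mathrm{QM}(Y,E)$ the function $A_{X,\Delta}$ is linear with strictly positive coefficients $A_{X,\Delta}(E_j)$ (as $(X,\Delta)$ is klt), so this set is a finite union of bounded polytopes. Passing to a convergent subsequence, $\nu_i\to\omega\in\mathrm{QM}(Y,E)$. As $A_{X,\Delta}$ is continuous on $\mathrm{QM}(Y,E)$, we get $A_{X,\Delta}(\omega)=\lim_i A_{X,\Delta}(\nu_i)=c$; and as each evaluation $\omega\mapsto\omega(\mathfrak{a}_m)$ is continuous on $\mathrm{Val}_X$ by the very definition of the topology, $\omega(\mathfrak{a}_m)=\lim_i\nu_i(\mathfrak{a}_m)\ge m$ for all $m$, so $\omega(\mathfrak{a}_\bullet)=\inf_m\omega(\mathfrak{a}_m)/m\ge 1$. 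Hence $A_{X,\Delta}(\omega)/\omega(\mathfrak{a}_\bullet)\le c$, and since $c$ is an infimum equality holds: the quasi-monomial valuation $\omega$ computes $\lct(X,\Delta,\mathfrak{a}_\bullet)$.

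The main obstacle is the second paragraph --- specifically the boundedness of Koll\'ar components of bounded log discrepancy over a fixed klt singularity (resting on the MMP and boundedness of complements), together with the MMP-based passage from an arbitrary minimizing valuation to a Koll\'ar component and the reduction to a single center. Once the minimizing sequence has been confined to one $\mathrm{QM}(Y,E)$, the argument needs nothing beyond the compactness of a sublevel set of $A_{X,\Delta}$ and the continuity of the evaluations $\nu\mapsto\nu(\mathfrak{a}_m)$ and $\nu\mapsto A_{X,\Delta}(\nu)$.
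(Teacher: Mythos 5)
Two preliminary remarks. First, the paper does not prove this statement at all: Theorem \ref{thm:valuation quasi-monomial} is quoted from \cite{Xu2}, and Remark \ref{rmk:xu main result} merely records the ingredients of Xu's proof that are reused later (in the proof of Theorem \ref{thrm:1}). So the relevant comparison is with Xu's argument, whose overall architecture you do reproduce: minimizing sequence, reduction to a fixed center, replacement by Koll\'ar components, boundedness of complements, a bounded family with a fiberwise log resolution, and a compactness argument on a dual complex. Second, your ``after passing to a subsequence \dots all $\nu_i$ share one center'' is not a routine Noetherian reduction: in \cite{Xu2} (and in Proposition \ref{prop:enlarged ideal for lct} here, its analogue for triples) the common center is produced as a generic point of the zero locus of the asymptotic multiplier ideal, and the $\mathfrak{m}_x$-primary enlargement of \cite[Prop.\ 7.14]{JM} is what yields the uniform bound $\nu_i(\mathfrak{m}_x)\ge\delta$ that keeps limits centered at $x$; you defer this to \cite{Xu2}, which is acceptable, but it is load-bearing.

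The genuine gap is in your pivotal reduction: the claim that, after spreading out and ``specializing to a closed point of $V$'', one obtains a single log-smooth pair $(Y,E)$ over $X$ with $\nu_i\in\mathrm{QM}(Y,E)$ for all $i\gg 0$. That is not what the boundedness machinery delivers, and I do not see how to get it. Each Koll\'ar component $S_i$ is an lc place of a complement $D'_{u_i}$ for a \emph{distinct} closed point $u_i\in V$, and the fiberwise log resolution gives a different model $(Y_{u_i},E_{u_i})$ for each $i$; a closed fiber of the family sees only one parameter value, so specialization cannot collect infinitely many $\nu_i$ into one $\mathrm{QM}(Y,E)$ over $\C$ (note also the internal inconsistency: a field extension $\C\subseteq K$ only appears if one \emph{generizes}, not specializes). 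The correct move, in \cite{Xu2} and in Step 3 of the proof of Theorem \ref{thrm:1}, is to pass to $K=\overline{K(V)}$ and identify all the dual complexes $\mathcal{D}(E_{u_i})$ with the single compact complex $\mathcal{D}(E'_K)$ over $(X_x)_K$. The limit one obtains there is a quasi-monomial valuation \emph{over the field extension}; via Proposition \ref{prop:extension} and semicontinuity of $A_{X,\Delta}$ its restriction $\nu_0$ to $K(X)$ computes $\lct(X,\Delta,\mathfrak{a}_\bullet)$, but $\nu_0$ is not yet known to be quasi-monomial over $X$, so your ``soft endgame'' does not close the proof. An extra step is needed (this is exactly Step 4 in the paper's proof of Theorem \ref{thrm:1}, following \cite{Xu2}): show that $\nu_0$ also computes $\lct(X,\Delta,\mathfrak{a}_\bullet(\nu_0))$ for its own valuation ideals, and then invoke \cite[Lemma 3.2]{Xu2} (Remark \ref{rmk:xu main result}(1)--(2)) to produce a quasi-monomial $\omega\ge\nu_0$ with $A_{X,\Delta}(\omega)=A_{X,\Delta}(\nu_0)$, whence $\omega(\mathfrak{a}_\bullet)\ge\nu_0(\mathfrak{a}_\bullet)$ and $\omega$ computes the threshold. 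As a minor point, the boundedness you invoke is not ``Koll\'ar components with $A\le C$ form a bounded family'' but rather: every Koll\'ar component is an lc place of an $N$-complement for a uniform $N$, and (after truncation modulo a power of $\mathfrak{m}_x$) these complements move in a family of finite type; no bound on $A$ enters.
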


\begin{remark}\label{rmk:xu main result}
The strategy and the main ingredients of the proof of Theorem \ref{thm:valuation quasi-monomial} are the following.
If $\nu$ is a valuation on $X$ which computes $\lct(X,\Delta,\mathfrak{a}_{\bullet})$, then there exists a quasi-monomial valuation $\omega$ which computes $\lct(X,\Delta,\mathfrak{a}_{\bullet})$ and satisfies the following properties:
\begin{enumerate}
    \item[(1)] $A_{X,\Delta}(\nu)=A_{X,\Delta}(\omega)$,
    \item[(2)] $\omega\ge \nu$,
    \item[(3)] $c_X(\nu)=c_X(\omega)$, and
    \item[(4)] there are positive real numbers $c_i$ and Koll\'{a}r components $S_i$ over $(X_x,\Delta_x)$ such that
    $\omega=\lim\limits_{i\to \infty} c_i\cdot\mathrm{ord}_{S_i}$
    where $x\coloneqq c_X(\nu)$.
\end{enumerate}

Our proof of Theorem \ref{thrm:1} is based on the generalization of these properties in the pklt settings.

The property (1) and (2) follow from \cite[Lemma 3.2]{Xu2}, while (3) is established in \cite[Proof of Theorem 1.1]{Xu2}. The property (4) follows from \cite[Proposition 3.1]{Xu2}.
\end{remark}

\begin{lemma} \label{lem:limit}
Let $(X,\Delta,D)$ be a potential triple such that $(X,\Delta)$ is lc and $A$ an ample divisor on $X$. Then
$$ \lct_{\sigma}(X,\Delta,D)=\lim_{\ell\to \infty}\lct_{\sigma}\left(X,\Delta,D+\frac{1}{\ell}A\right).$$
\end{lemma}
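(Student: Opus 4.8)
The substance of this lemma is an interchange of infima, once the monotonicity of the asymptotic valuation in the ample perturbation is recorded. The plan is as follows. Write $g(\ell):=\lct_{\sigma}(X,\Delta,D+\tfrac{1}{\ell}A)$. First I would fix $\nu\in\Val^{*}_{X}$ and check that the sequence $\ell\mapsto\sigma_{\nu}(D+\tfrac{1}{\ell}A)$ is non-decreasing, bounded above by $\sigma_{\nu}(D)$, and has limit $\sigma_{\nu}(D)$. The limit statement is nothing but the definition of $\sigma_{\nu}(D)$ from Subsection~\ref{subsec:asympt val}: the limit there does not depend on the auxiliary ample class, so we may use the given $A$ and run the parameter $\varepsilon$ through the values $1/\ell$, using also that $D+\tfrac{1}{\ell}A$ is big, so that $\sigma_{\nu}(D+\tfrac{1}{\ell}A)=\nu(\norm{D+\tfrac{1}{\ell}A})$. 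The monotonicity and the upper bound follow from writing $D+\tfrac{1}{\ell}A=(D+\tfrac{1}{\ell'}A)+(\tfrac{1}{\ell}-\tfrac{1}{\ell'})A$ for $\ell\le\ell'$ and invoking the subadditivity of $\nu(\norm{\cdot})$ on big classes together with $\nu(\norm{A'})=0$ for any ample $A'$ (since $|mA'|$ is basepoint free for $m\gg 0$); the bound against $\sigma_{\nu}(D)$ is the same argument letting $\ell'\to\infty$. Consequently $\sigma_{\nu}(D)=\sup_{\ell}\sigma_{\nu}(D+\tfrac{1}{\ell}A)$.

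Next I would bring in the hypothesis that $(X,\Delta)$ is lc, which guarantees $A_{X,\Delta}(\nu)\ge 0$ for every $\nu\in\Val^{*}_{X}$; this is exactly what makes $t\mapsto A_{X,\Delta}(\nu)/t$ non-increasing on $(0,\infty)$, so that for each fixed $\nu$
$$\frac{A_{X,\Delta}(\nu)}{\sigma_{\nu}(D)}=\frac{A_{X,\Delta}(\nu)}{\sup_{\ell}\sigma_{\nu}(D+\tfrac{1}{\ell}A)}=\inf_{\ell}\frac{A_{X,\Delta}(\nu)}{\sigma_{\nu}(D+\tfrac{1}{\ell}A)}$$
(with the usual conventions $c/0=+\infty$ for $c\in[0,+\infty]$ and $(+\infty)/c=+\infty$). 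Taking $\inf_{\nu\in\Val^{*}_{X}}$ of both sides and exchanging the two infima gives $\lct_{\sigma}(X,\Delta,D)=\inf_{\ell}g(\ell)$, valid also when $\lct_{\sigma}(X,\Delta,D)\in\{0,+\infty\}$. Finally, the same monotonicity of $\ell\mapsto\sigma_{\nu}(D+\tfrac{1}{\ell}A)$ shows $g$ is non-increasing in $\ell$ (for $\ell\le\ell'$ one has $A_{X,\Delta}(\nu)/\sigma_{\nu}(D+\tfrac{1}{\ell}A)\ge A_{X,\Delta}(\nu)/\sigma_{\nu}(D+\tfrac{1}{\ell'}A)$ for every $\nu$, and one takes $\inf_{\nu}$), so $\inf_{\ell}g(\ell)=\lim_{\ell\to\infty}g(\ell)$, which is the claim.

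I expect the only mildly delicate part to be the bookkeeping of degenerate valuations in the displayed identity — those with $A_{X,\Delta}(\nu)=+\infty$, and those with $\sigma_{\nu}(D)=0$, equivalently $\sigma_{\nu}(D+\tfrac{1}{\ell}A)=0$ for all $\ell$ (again from subadditivity, since $D+\tfrac{1}{\ell}A=(D+\tfrac{1}{2\ell}A)+\tfrac{1}{2\ell}A$ forces $\sigma_{\nu}(D+\tfrac{1}{\ell}A)\le\sigma_{\nu}(D+\tfrac{1}{2\ell}A)\le\cdots$, with limit $\sigma_{\nu}(D)=0$). After fixing the conventions above one checks the pointwise identity holds verbatim in each such case, and in particular that $\lct_{\sigma}(X,\Delta,D)=+\infty$ forces $g(\ell)=+\infty$ for all $\ell$ while $\lct_{\sigma}(X,\Delta,D)=0$ forces $\lim_{\ell}g(\ell)=0$. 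No geometric input is needed beyond the standard properties of the asymptotic order of vanishing recalled above (cf. \cite{Nak},\cite{Jow}); everything else is formal.
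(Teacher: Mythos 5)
Your argument is correct and is essentially the paper's proof: the paper's one-line chain of equalities $\inf_{\nu}\lim_{\ell}=\lim_{\ell}\inf_{\nu}$ rests precisely on the monotonicity facts you spell out (for fixed $\nu$, $\sigma_{\nu}(D+\tfrac{1}{\ell}A)$ increases to $\sigma_{\nu}(D)$, so the limit in $\ell$ is an infimum, infima commute, and the resulting function of $\ell$ is monotone so its infimum is its limit). Your write-up simply makes explicit the justification, including the degenerate conventions, that the paper leaves implicit.
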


\begin{proof}
The lemma can be proved by observing the following sequence of equalities:
\begin{align*}
\lct_{\sigma}(X,\Delta,D)=\inf_{\nu\in \mathrm{Val}^*_X}\frac{A_{X,\Delta}(\nu)}{\sigma_{\nu}(D)}
 &=\inf_{\nu\in \mathrm{Val}^*_X}\lim_{\ell\to\infty}\frac{A_{X,\Delta}(\nu)}{\sigma_{\nu}\left(D+\frac{1}{\ell}A\right)}
\\ &=\lim_{\ell\to\infty}\inf_{\nu\in \mathrm{Val}^*_X}\frac{A_{X,\Delta}(\nu)}{\sigma_{\nu}\left(D+\frac{1}{\ell}A\right)}
\\ &=\lim_{\ell\to\infty}\lct_{\sigma}\left(X,\Delta,D+\frac{1}{\ell}A\right).\qedhere
\end{align*}
\end{proof}

\section{Asymptotic multiplier ideal sheaves}\label{sect:asympt mult ideal}

In this section, we collect preliminary notions and results on asymptotic multiplier ideal sheaves.

\subsection{Asymptotic multiplier ideal and log canonical threshold}\hfill
\smallskip

Let us first recall the definition of \textit{asymptotic multiplier ideal sheaf}.

\begin{definition}
Let $(X,\Delta)$ be a pair and $\mathfrak{a}$ an ideal of $\mathcal{O}_X$. Let $f\colon Y\to X$ be a log resolution of $(X,\Delta)$ and $\mathfrak{a}$ and $E$ a divisor on $Y$ such that $\mathfrak{a}\cdot\mathcal{O}_Y=\mathcal{O}_Y(-E)$. We define the \emph{multiplier ideal sheaf} $\mathcal{J}(X,\Delta,\mathfrak{a}^{\lambda})$ of $(X,\Delta,\mathfrak{a}^{\lambda})$ for any $\lambda>0$ as
$$\mathcal{J}(X,\Delta,\mathfrak{a}^{\lambda})\coloneqq f_*\mathcal{O}_Y(\ceil{K_Y-f^*(K_X+\Delta)-\lambda E}).
$$
If $\mathfrak{a}_{\bullet}$ is a graded sequence of ideals of $\mathcal{O}_X$, then the \textit{asymptotic multiplier ideal sheaf} $\mathcal{J}(X,\Delta,\mathfrak{a}^{\lambda}_{\bullet})$ is defined as the maximal element of $\{\mathcal{J}(X,\Delta,\mathfrak{a}^{\frac{\lambda}{rm}}_{rm})\}_{m\in \Z_{>0}}$, where $r$ is a positive integer such that $r(K_X+\Delta)$ is Cartier. Such a maximal element exists by the standard argument (cf. \cite[Chapter 11]{Laz}).
\end{definition}

Let us prove the following lemmas.

\begin{lemma}[{cf. \cite[Lemma 1.7, Lemma 6.7]{JM}}] \label{lem:lct as min}
Let $(X,\Delta)$ be a pair and $\mathfrak{a},\mathfrak{q}$ ideals in $\mathcal{O}_X$. Let $f:Y\to X$ be a log resolution of the pair $(X,\Delta)$ and ideals $\mathfrak{a}, \mathfrak{q}$. If $E_1,\cdots,E_r$ are distinct prime divisors on $Y$ such that $ E_1\cup \cdots \cup E_r=\Supp f^{-1}_*\Delta\cup \Supp (\mathfrak{a}\cdot \mathfrak{q})\cdot\mathcal{O}_Y,$
then
$$ \lct^{\mathfrak{q}}(X,\Delta,\mathfrak{a})=\min_{i=1,\cdots,r}\frac{A_{X,\Delta}(E_i)+\mathrm{ord}_{E_i}(\mathfrak{q})}{\mathrm{ord}_{E_i}(\mathfrak{a})}.$$
In particular, there exists a divisorial valuation $\nu=\ord_E$ which computes $ \lct^{\mathfrak{q}}(X,\Delta,\mathfrak{a})$.
\end{lemma}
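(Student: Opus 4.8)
The plan is to reduce the statement about arbitrary valuations to a finite computation on a log resolution, exactly as in the analogous statements in \cite{JM}. First I would fix notation: choose a log resolution $f\colon Y\to X$ of the pair $(X,\Delta)$ together with the ideals $\mathfrak a$ and $\mathfrak q$, so that $\mathfrak a\cdot\mathcal O_Y=\mathcal O_Y(-A)$ and $\mathfrak q\cdot\mathcal O_Y=\mathcal O_Y(-Q)$ for effective divisors $A,Q$ supported on the simple normal crossing divisor $E_1\cup\cdots\cup E_r$ described in the statement (we may also arrange that $K_Y-f^*(K_X+\Delta)$ is supported there, after enlarging the $E_i$ if necessary). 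Then $\mathrm{ord}_{E_i}(\mathfrak a)=\mathrm{mult}_{E_i}(A)$ and $\mathrm{ord}_{E_i}(\mathfrak q)=\mathrm{mult}_{E_i}(Q)$.

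The inequality $\lct^{\mathfrak q}(X,\Delta,\mathfrak a)\le \min_i \frac{A_{X,\Delta}(E_i)+\mathrm{ord}_{E_i}(\mathfrak q)}{\mathrm{ord}_{E_i}(\mathfrak a)}$ is immediate: each $\mathrm{ord}_{E_i}$ is a nontrivial valuation in $\mathrm{Val}^*_X$ (note $\mathrm{ord}_{E_i}(\mathfrak a)>0$ since $E_i\subseteq \Supp(\mathfrak a\cdot\mathfrak q)\cdot\mathcal O_Y$ forces $E_i$ to be a component of $A$ or $Q$; the indices where $\mathrm{ord}_{E_i}(\mathfrak a)=0$ contribute $+\infty$ and can be discarded), so it appears in the infimum defining the log canonical threshold. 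For the reverse inequality I would argue directly with divisorial valuations: given any $\nu\in\mathrm{Val}^*_X$, one reduces to the case where $\nu=\mathrm{ord}_F$ for a prime divisor $F$ over $X$ (the infimum over all valuations equals the infimum over divisorial ones by the standard density/lower-semicontinuity argument, or alternatively one may invoke that $\lct$ is computed by a divisor; but the cleaner route here is the multiplier-ideal characterization below). Then, by passing to a further log resolution dominating $Y$ on which $F$ appears, one checks that the quantity $\frac{A_{X,\Delta}(F)+\mathrm{ord}_F(\mathfrak q)}{\mathrm{ord}_F(\mathfrak a)}$ is bounded below by the minimum over the $E_i$; concretely, this follows because the multiplier ideal $\mathcal J(X,\Delta,\mathfrak a^\lambda)$ (twisted by $\mathfrak q$) already becomes non-unit exactly when $\lambda$ crosses $\min_i\frac{A_{X,\Delta}(E_i)+\mathrm{ord}_{E_i}(\mathfrak q)}{\mathrm{ord}_{E_i}(\mathfrak a)}$, since $\lceil K_Y-f^*(K_X+\Delta)-\lambda A-Q\rceil$ fails to be effective precisely at that threshold, and every divisor $F$ over $X$ has $\mathrm{ord}_F$ of this ceiling controlled by the discrepancy inequality $A_{X,\Delta}(F)+\mathrm{ord}_F(\mathfrak q)-\lambda\,\mathrm{ord}_F(\mathfrak a)\ge 0$ holding for all $F$ once it holds for the $E_i$ on a log resolution. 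This is the content of \cite[Lemma 1.7, Lemma 6.7]{JM} and I would cite those for the routine verification.

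The main obstacle, such as it is, is purely bookkeeping: making sure that the log resolution simultaneously resolves $(X,\Delta)$, $\mathfrak a$, and $\mathfrak q$, and that the support condition $E_1\cup\cdots\cup E_r=\Supp f^{-1}_*\Delta\cup\Supp(\mathfrak a\cdot\mathfrak q)\cdot\mathcal O_Y$ is exactly what is needed so that no relevant divisor is omitted from the finite minimum — in particular that any exceptional divisor $F$ not among the $E_i$ has $A_{X,\Delta}(F)>0$ and contributes nothing smaller. Once the resolution is set up, the discrepancy computation $A_{Y',\Delta_{Y'}}(F)=A_{X,\Delta}(F)$ under a further blow-up $Y'\to Y$, together with the convexity of $\nu\mapsto A_{X,\Delta}(\nu)+\nu(\mathfrak q)-\lambda\nu(\mathfrak a)$ in quasi-monomial coordinates on $\mathrm{QM}(Y,E)$, gives the bound, and the final sentence of the lemma follows since the witnessing valuation $\mathrm{ord}_{E_i}$ realizing the minimum is by construction divisorial.
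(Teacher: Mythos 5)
Your treatment of the easy inequality and of the purely divisorial part (checking the discrepancy inequality for any divisor $F$ over $X$ on a further log resolution dominating $Y$) is fine, but there is a genuine gap at the step you treat as routine: the reduction from arbitrary valuations $\nu\in\mathrm{Val}^*_X$ to divisorial ones. None of the three justifications you offer works. Density of divisorial valuations plus lower semicontinuity of $A_{X,\Delta}$ gives an inequality in the wrong direction: if $\nu_k\to\nu$ with $\nu_k$ divisorial, lower semicontinuity only yields $A_{X,\Delta}(\nu)\le\liminf A_{X,\Delta}(\nu_k)$, so the infimum over all of $\mathrm{Val}^*_X$ could a priori be strictly smaller than the infimum over divisorial valuations. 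Invoking ``$\lct$ is computed by a divisor'' is (a version of) the statement being proved. And the multiplier-ideal characterization of $\lct^{\mathfrak q}$ over $\mathrm{Val}^*_X$ is Lemma \ref{lem:lct as inf} of this paper, which is proved \emph{using} the present lemma, so within this paper that route is circular; the nontrivial content of that characterization is exactly the assertion that $A_{X,\Delta}(\nu)+\nu(\mathfrak q)\ge\lambda\,\nu(\mathfrak a)$ for \emph{all} $\nu\in\mathrm{Val}^*_X$ once it holds for the $E_i$, which does not follow from the divisorial computation alone.

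The missing ingredient is the retraction map $r_{(Y,E)}\colon \mathrm{Val}_X\to\mathrm{QM}(Y,E)$: since $\mathfrak a\cdot\mathcal O_Y$ and $\mathfrak q\cdot\mathcal O_Y$ are co-supported on $E$, one has $r_{(Y,E)}(\nu)(\mathfrak a)=\nu(\mathfrak a)$ and $r_{(Y,E)}(\nu)(\mathfrak q)=\nu(\mathfrak q)$ by \cite[Corollary 4.8]{JM}, while $A_{X,\Delta}(r_{(Y,E)}(\nu))\le A_{X,\Delta}(\nu)$ by \cite[Corollary 5.4]{JM}; hence the infimum defining $\lct^{\mathfrak q}$ may be taken over $\mathrm{QM}(Y,E)$, then over the compact dual complex $\mathcal D(E)$ after rescaling, where it is attained by some quasi-monomial $\nu_0$. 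Your closing remark about convexity on $\mathrm{QM}(Y,E)$ is the right idea for the last step, but it only becomes relevant after this reduction; the paper then observes that $\nu\mapsto A_{X,\Delta}(\nu)+\nu(\mathfrak q)-\lct^{\mathfrak q}(X,\Delta,\mathfrak a)\,\nu(\mathfrak a)$ is in fact \emph{linear} on $\mathrm{QM}_\eta(Y,E)$ (because $\mathfrak a\cdot\mathcal O_Y$ and $\mathfrak q\cdot\mathcal O_Y$ are monomial at $\eta$), is nonnegative, and vanishes at $\nu_0$, so it vanishes at each vertex $\mathrm{ord}_{E_{i_j}}$ with positive weight, producing the divisorial minimizer among the $E_i$. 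Without the retraction step your argument proves the formula only for the infimum over divisorial valuations, not for $\lct^{\mathfrak q}$ as defined in the paper.
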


\begin{proof}
Let us consider a function $\chi:\Val^*_X\to \R$ such that
$$ \chi(\nu)\coloneqq \frac{A_{X,\Delta}(\nu)+\nu(\mathfrak{q})}{\nu(\mathfrak{a})}$$
for any $\nu\in \mathrm{Val}^{*}_X$. Let $E\coloneqq \sum\limits^{r}_{i=1} E_i$ and $r_{(Y,E)}:\mathrm{Val}_X\to \mathrm{QM}(Y,E)$ the retraction map. Then we obtain the following:
$$
\begin{aligned} \left(\chi\circ r_{(Y,E)}\right)(\nu)&=\frac{A_{X,\Delta}(r_{(Y,E)}(\nu))+r_{(Y,E)}(\nu)(\mathfrak{q})}{r_{(Y,E)}(\nu)(\mathfrak{a})}\\
&=\frac{A_{X,\Delta}(r_{Y,E}(\nu))+\nu(\mathfrak{q})}{\nu(\mathfrak{a})}\\
&\le \frac{A_{X,\Delta}(\nu)+\nu(\mathfrak{q})}{\nu(\mathfrak{a})}\\
&=\chi(\nu),
\end{aligned}
$$
where the second equality follows from \cite[Corollary 4.8]{JM} and the following inequality from \cite[Corollary 5.4]{JM},

This implies
$$ \lct^{\mathfrak{q}}(X,\Delta,\mathfrak{a})=\inf_{\nu\in \mathrm{Val}^{*}_X}\chi(\nu)\ge \inf_{\nu\in \mathrm{QM}(Y,E)\setminus \{0\}}\chi(\nu)\ge \inf_{\nu\in \mathrm{Val}^{*}_X}\chi(\nu)=\lct^{\mathfrak{q}}(X,\Delta,\mathfrak{a}).$$
Thus, after rescaling $\nu$, we see that the following equalities hold:
$$ \lct^{\mathfrak{q}}(X,\Delta,\mathfrak{a})=\inf_{\nu\in \mathrm{QM}(Y,E)\setminus \{0\}}\chi(\nu)=\inf_{\nu \in \mathcal{D}(E)}\chi(\nu),$$
where the dual complex $\mathcal{D}(E)$ of $(Y,E)$ is identified with the subset of quasi-valuations $\nu\in\mathrm{QM}(Y,E)$ such that $A_{X,\Delta}(\nu)=1$ as in Section \ref{dual complex}. Since $\mathcal{D}(E)$ is compact and $\chi:\mathcal{D}(E)\to \R$ is continuous, there exists a nonzero quasi-monomial valuation $\nu_0\in \mathcal{D}(E)$ computing $\lct^{\mathfrak{q}}(X,\Delta,\mathfrak{a})$. Suppose that $\nu_0=\nu_{\eta,\alpha}$ is a quasi-monomial valuation associated to a generic point $\eta=c_Y(\nu_0)$ of an irreducible component of $E_{i_1}\cap \cdots \cap E_{i_k}$ and $\alpha=(\alpha_{i_1},\cdots,\alpha_{i_k})\in \R^k_{\ge 0}$.

Let us consider a function $\phi: \QM_{\eta}(Y,E)\to \R$ such that for $\nu\in \mathrm{QM}_{\eta}(Y,E)$
$$ \phi(\nu)=A_{X,\Delta}(\nu)+\nu(\mathfrak{q})-\lct^{\mathfrak{q}}(X,\Delta,\mathfrak{a})\nu(\mathfrak{a}).$$
We claim that $\phi$ is a linear function on $\mathrm{QM}_{\eta}(Y,E)$. First of all, it is easy to check that $A_{X,\Delta}(\nu)$ is linear. Let $\beta=(\beta_1,\cdots,\beta_r)\in \R^r_{\ge 0}$, and $z_i$ be a local equation of $E_i$ at $\eta$. Then we see that $\mathfrak{a}\cdot \mathcal{O}_Y$ is generated by $z^{\mathrm{ord}_{E_1}(\mathfrak{a})}_1\cdots z^{\mathrm{ord}_{E_r}(\mathfrak{a})}_r$ at $\eta$. Thus, we obtain
$$ \nu_{\beta}(\mathfrak{a})=\nu_{\beta}\left(z^{\mathrm{ord}_{E_1}(\mathfrak{a})}_1\cdots z^{\mathrm{ord}_{E_r}(\mathfrak{a})}_r\right).$$
Since $E$ is a simple normal crossing divisor, by the definition of quasi-monomial valuation, we have
$$ \nu_{\beta}(\mathfrak{a})=\sum^k_{j=1}\beta_{i_j}\cdot \mathrm{ord}_{E_{i_j}}(\mathfrak{a}).$$
Hence, $\nu\mapsto \nu(\mathfrak{a})$ is linear on $\mathrm{QM}_{\eta}(Y,E)$. Similarly, we can prove that $\nu\mapsto \nu(\mathfrak{q})$ is linear on $\mathrm{QM}_{\eta}(Y,E)$. Thus, $\phi:\mathrm{QM}_{\eta}(Y,E)\to \R$ is a linear function.

Note that $\phi(\nu)\ge 0$ for any $\nu\in \mathrm{QM}_{\eta}(Y,E)$, and $\phi(\nu)=0$ if and only if $\nu=0$ or $\nu$ computes $\lct^{\mathfrak{q}}(X,\Delta,\mathfrak{a})$. Since $\phi(\nu_0)=0$, any divisorial valuation $\mathrm{ord}_{E_{i_j}}$ (associated to $E_{i_j}$) with $\alpha_{i_j}\ne 0$ computes $\lct^{\mathfrak{q}}(X,\Delta,\mathfrak{a})$.
\end{proof}

\begin{lemma}[{cf. \cite[Section 1.4]{JM}}] \label{lem:lct as inf}
Let $(X,\Delta)$ be a klt pair and $\mathfrak{a},\mathfrak{q}$ ideals in $\mathcal{O}_X$. Then
$$ \lct^{\mathfrak{q}}(X,\Delta,\mathfrak{a})=\inf\{\lambda\mid\mathfrak{q}\nsubseteq \mathcal{J}(X,\Delta,\mathfrak{a}^{\lambda})\}.$$
\end{lemma}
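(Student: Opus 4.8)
The plan is to prove the two inequalities separately, using the well-known correspondence between log canonical thresholds, multiplier ideals, and valuations. First I would recall that since $(X,\Delta)$ is klt, for any fixed ideal $\mathfrak{a}$ and any $\lambda>0$ the multiplier ideal $\mathcal{J}(X,\Delta,\mathfrak{a}^{\lambda})$ is computed on a single log resolution $f\colon Y\to X$ of $(X,\Delta)$ and $\mathfrak{a}$, via $\mathcal{J}(X,\Delta,\mathfrak{a}^{\lambda})=f_*\mathcal{O}_Y(\lceil K_Y-f^*(K_X+\Delta)-\lambda E\rceil)$ where $\mathfrak{a}\cdot\mathcal{O}_Y=\mathcal{O}_Y(-E)$, $E=\sum_i \mathrm{ord}_{E_i}(\mathfrak{a})E_i$. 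Writing $K_Y-f^*(K_X+\Delta)=\sum_i (A_{X,\Delta}(E_i)-1)E_i +(\text{other divisors with }\mathrm{ord}(\mathfrak{a})=0)$, one sees that a local section $g$ of $\mathcal{O}_X$ lies in $\mathcal{J}(X,\Delta,\mathfrak{a}^{\lambda})$ if and only if $A_{X,\Delta}(E_i)+\mathrm{ord}_{E_i}(g)-\lambda\,\mathrm{ord}_{E_i}(\mathfrak{a})>0$ for every $i$; equivalently $\mathrm{ord}_{E_i}(g)>\lambda\,\mathrm{ord}_{E_i}(\mathfrak{a})-A_{X,\Delta}(E_i)$ for all $i$ (using that these quantities are integers against a ceiling, the strict inequality is the right bookkeeping once one is careful with the $\lceil\cdot\rceil$). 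Thus $\mathfrak{q}\nsubseteq\mathcal{J}(X,\Delta,\mathfrak{a}^{\lambda})$ means there is a section $g\in\mathfrak{q}$ and an index $i$ with $A_{X,\Delta}(E_i)+\mathrm{ord}_{E_i}(g)\le \lambda\,\mathrm{ord}_{E_i}(\mathfrak{a})$, i.e. (after also taking the minimum over $g\in\mathfrak{q}$, which realizes $\mathrm{ord}_{E_i}(\mathfrak{q})$) $A_{X,\Delta}(E_i)+\mathrm{ord}_{E_i}(\mathfrak{q})\le \lambda\,\mathrm{ord}_{E_i}(\mathfrak{a})$.

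For the inequality $\lct^{\mathfrak{q}}(X,\Delta,\mathfrak{a})\le \inf\{\lambda\mid\mathfrak{q}\nsubseteq\mathcal{J}(X,\Delta,\mathfrak{a}^{\lambda})\}$: take any $\lambda$ in the set on the right. By the discussion above there is a prime divisor $E_i$ on $Y$ with $\dfrac{A_{X,\Delta}(E_i)+\mathrm{ord}_{E_i}(\mathfrak{q})}{\mathrm{ord}_{E_i}(\mathfrak{a})}\le \lambda$, and since $\lct^{\mathfrak{q}}(X,\Delta,\mathfrak{a})$ is an infimum over all valuations it is $\le$ this ratio, hence $\le\lambda$; taking the infimum over such $\lambda$ gives the claim. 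For the reverse inequality, I would use Lemma \ref{lem:lct as min}: choose a log resolution of $(X,\Delta)$ and of the ideals $\mathfrak{a},\mathfrak{q}$ simultaneously, so that $\lct^{\mathfrak{q}}(X,\Delta,\mathfrak{a})=\min_i \dfrac{A_{X,\Delta}(E_i)+\mathrm{ord}_{E_i}(\mathfrak{q})}{\mathrm{ord}_{E_i}(\mathfrak{a})}$ is attained at some $E_{i_0}$ with $\mathrm{ord}_{E_{i_0}}(\mathfrak{a})>0$. Setting $\lambda_0:=\lct^{\mathfrak{q}}(X,\Delta,\mathfrak{a})$, we get $A_{X,\Delta}(E_{i_0})+\mathrm{ord}_{E_{i_0}}(\mathfrak{q})=\lambda_0\,\mathrm{ord}_{E_{i_0}}(\mathfrak{a})$, so for this same log resolution the section of $\mathfrak{q}$ achieving $\mathrm{ord}_{E_{i_0}}(\mathfrak{q})$ fails the membership test for $\mathcal{J}(X,\Delta,\mathfrak{a}^{\lambda})$ at every $\lambda\ge\lambda_0$ — more precisely $\mathfrak{q}\nsubseteq\mathcal{J}(X,\Delta,\mathfrak{a}^{\lambda})$ for all $\lambda>\lambda_0$ (and the infimum on the right is then $\le\lambda_0$). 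Combining the two inequalities yields equality.

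The main technical obstacle is the bookkeeping around the round-up $\lceil\cdot\rceil$ in the definition of the multiplier ideal and the resulting strict-versus-non-strict inequalities: one must be careful that $g\in\mathcal{J}(X,\Delta,\mathfrak{a}^{\lambda})$ translates to the \emph{strict} inequalities $A_{X,\Delta}(E_i)+\mathrm{ord}_{E_i}(g)>\lambda\,\mathrm{ord}_{E_i}(\mathfrak{a})$, which is exactly why the threshold appears as an infimum of $\lambda$'s realizing non-containment rather than a minimum. This is handled by the standard observation that $\lceil t\rceil \ge 0 \iff t>-1$, applied coefficientwise, and by passing to a common log resolution where all the relevant orders of vanishing are simultaneously monomialized. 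I also need the compatibility of multiplier ideals with a fixed log resolution of $(X,\Delta)$ and $\mathfrak{a},\mathfrak{q}$ together, which is available by the usual refinement argument. Everything else — the semicontinuity-type comparison of the valuative infimum with the divisorial minimum on the chosen model — is furnished directly by Lemma \ref{lem:lct as min}.
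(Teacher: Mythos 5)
Your proposal is correct and follows essentially the same route as the paper: both reduce to Lemma \ref{lem:lct as min} on a common log resolution of $(X,\Delta)$, $\mathfrak{a}$, $\mathfrak{q}$, and use the divisor-by-divisor membership criterion for $\mathcal{J}(X,\Delta,\mathfrak{a}^{\lambda})$ coming from the round-up, proving the two inequalities separately. Your bookkeeping of the strict inequality forced by the ceiling is in fact slightly more careful than the paper's stated criterion, but the argument is the same.
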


\begin{proof}
Let $f\colon Y\to X$ be a log resolution of $(X,\Delta)$ and ideals $\mathfrak{a}, \mathfrak{q}$. Let $E_1,\cdots,E_r$ be distinct prime divisors on $Y$ such that
$$ E_1\cup \cdots \cup E_r=\Supp f^{-1}_*\Delta\cup \Supp (\mathfrak{a}\cdot \mathfrak{q})\cdot\mathcal{O}_Y,$$
and $E$ a divisor on $Y$ such that $\mathfrak{a}\cdot \mathcal{O}_Y=\mathcal{O}_Y(-E)$. Note that the multiplier ideal sheaf $\mathcal{J}(X,\Delta,\mathfrak{a}^{\lambda})$ is the ideal sheaf in $\mathcal{O}_X$ consisting of those local sections $f\in \mathcal{O}_X$ such that
\begin{equation*}
-\mathrm{ord}_{E_i}(f)<\mathrm{ord}_{E_i}(\ceil{K_Y-f^*(K_X+\Delta)-\lambda E})
\end{equation*}
for all $i$, or equivalently,
$$ -\mathrm{ord}_{E_i}(f)\le A_{X,\Delta}(E_i)-\lambda \cdot \mathrm{ord}_{E_i}(\mathfrak{a})$$
for all $i$.

We first prove
$$
\lct^{\mathfrak{q}}(X,\Delta,\mathfrak{a})\ge \inf\{\lambda\mid \mathfrak{q}\nsubseteq \mathcal{J}(X,\Delta,\mathfrak{a}^{\lambda})\}.
$$
By Lemma \ref{lem:lct as min}, we have
$$ \lct^{\mathfrak{q}}(X,\Delta,\mathfrak{a})=\min_{i=1,\cdots,r}\frac{A_{X,\Delta}(E_i)+\mathrm{ord}_{E_i}(\mathfrak{q})}{\mathrm{ord}_{E_i}(\mathfrak{a})}.$$
Let $\lambda> \lct^{\mathfrak{q}}(X,\Delta,\mathfrak{a})$. Then there exists some $1\le i\le r$ such that $\lambda> \frac{A_{X,\Delta}(E_i)+\mathrm{ord}_{E_i}(\mathfrak{q})}{\mathrm{ord}_{E_i}(\mathfrak{a})}$. Therefore, for such an $i$
$$-\mathrm{ord}_{E_i}(\mathfrak{q})> A_{X,\Delta}(E_i)-\lambda \mathrm{ord}_{E_i}(\mathfrak{a}).$$
This implies that there is a local section $f\in \mathfrak{q}$ such that $f\notin \mathcal{J}(X,\Delta,\mathfrak{a}^{\lambda})$. Hence, $\mathfrak{q}\nsubseteq \mathcal{J}(X,\Delta,\mathfrak{a}^{\lambda})$, and the inequality holds.

For the converse, suppose that $\lambda<\lct^{\mathfrak{q}}(X,\Delta,\mathfrak{a})$. Then we have $\lambda<\frac{A_{X,\Delta}(E_i)+\mathrm{ord}_{E_i}(\mathfrak{q})}{\mathrm{ord}_{E_i}(\mathfrak{a})}$ for any $1\le i\le r$, and thus
$$ -\mathrm{ord}_{E_i}(\mathfrak{q})<A_{X,\Delta}(E_i)-\lambda \mathrm{ord}_{E_i}(\mathfrak{a})\text{ for all }1\le i\le r.$$
Therefore, we obtain $\mathfrak{q}\subseteq \mathcal{J}(X,\Delta,\mathfrak{a}^{\lambda})$, which implies
\begin{align*}
\lct^{\mathfrak{q}}(X,\Delta,\mathfrak{a})\le \inf\{\lambda\mid \mathfrak{q}\nsubseteq \mathcal{J}(X,\Delta,\mathfrak{a}^{\lambda})\}.
\end{align*}
This completes the proof.
\end{proof}

For a pair $(X,\Delta)$, a graded sequence of ideals $\mathfrak{a}_{\bullet}$ and a positive integer $m$, we denote
$$\mathfrak{b}_m\coloneqq\mathcal{J}(X,\Delta,\mathfrak{a}^m_{\bullet}).$$
For a graded sequence of ideals $\mathfrak{a}_{\bullet}$, we let
$\Phi(\mathfrak{a}_{\bullet})\coloneqq\{m\in \Z_{>0}\mid \mathfrak{a}_m\ne (0)\}$.

\begin{corollary}[{\cite[Lemma 1.58]{Xu4}}]\label{cor:tak}
Let $(X,\Delta)$ be a klt pair and $\mathfrak{a}_{\bullet}$ a graded sequence of ideals of $\mathcal{O}_X$. There is a nonzero ideal $I\subseteq \mathcal{O}_X$ which depends only on $(X,\Delta)$ such that for any $m,m'\in \Phi(\mathfrak{a}_{\bullet})$, we have $I\cdot \mathfrak{b}_{m+m'}\subseteq \mathfrak{b}_m\cdot \mathfrak{b}_{m'}$.
\end{corollary}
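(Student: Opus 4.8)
The plan is to reduce the statement to the singular version of the subadditivity theorem for multiplier ideals, combined with the standard fact that an asymptotic multiplier ideal is computed by a sufficiently divisible truncation of the graded sequence.

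The external input is Takagi's subadditivity theorem on singular varieties: for the klt pair $(X,\Delta)$ there is a nonzero ideal $I\subseteq\mathcal{O}_X$, depending only on $(X,\Delta)$ (and not on the ideals or exponents below), such that for all ideals $\mathfrak{c},\mathfrak{d}\subseteq\mathcal{O}_X$ and all real numbers $s,t>0$,
$$ I\cdot\mathcal{J}(X,\Delta,\mathfrak{c}^{s}\mathfrak{d}^{t})\subseteq\mathcal{J}(X,\Delta,\mathfrak{c}^{s})\cdot\mathcal{J}(X,\Delta,\mathfrak{d}^{t}). $$
When $X$ is smooth and $\Delta=0$ this is the classical Demailly--Ein--Lazarsfeld subadditivity with $I=\mathcal{O}_X$; in general $I$ measures the failure of subadditivity caused by the singularities of $(X,\Delta)$, and it can be produced explicitly from a fixed log resolution. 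I would quote this rather than reprove it.

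First I would record that, for a fixed $\lambda$, the family $\{\mathcal{J}(X,\Delta,\mathfrak{a}_{rk}^{\lambda/(rk)})\}_{k\ge 1}$ is nondecreasing along divisibility: if $k\mid k'$, say $k'=k\ell$, then $\mathfrak{a}_{rk}^{\ell}\subseteq\mathfrak{a}_{rk\ell}$ because $\mathfrak{a}_\bullet$ is graded, hence $\mathcal{J}(X,\Delta,\mathfrak{a}_{rk}^{\lambda/(rk)})=\mathcal{J}(X,\Delta,(\mathfrak{a}_{rk}^{\ell})^{\lambda/(rk\ell)})\subseteq\mathcal{J}(X,\Delta,\mathfrak{a}_{rk'}^{\lambda/(rk')})$ by monotonicity of multiplier ideals in the ideal argument. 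By Noetherianity the maximum defining $\mathcal{J}(X,\Delta,\mathfrak{a}_\bullet^{\lambda})$ is attained for all sufficiently divisible $k$. Hence I can choose a single positive integer $N$, divisible by $r$, $m$ and $m'$ — in particular $\mathfrak{a}_N\ne (0)$, since $m\in\Phi(\mathfrak{a}_\bullet)$ and $\mathfrak{a}_m^{N/m}\subseteq\mathfrak{a}_N$ — such that simultaneously $\mathfrak{b}_m=\mathcal{J}(X,\Delta,\mathfrak{a}_N^{m/N})$, $\mathfrak{b}_{m'}=\mathcal{J}(X,\Delta,\mathfrak{a}_N^{m'/N})$ and $\mathfrak{b}_{m+m'}=\mathcal{J}(X,\Delta,\mathfrak{a}_N^{(m+m')/N})$.

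Finally I would write $\mathfrak{a}_N^{(m+m')/N}=\mathfrak{a}_N^{m/N}\cdot\mathfrak{a}_N^{m'/N}$ and apply the subadditivity theorem with $\mathfrak{c}=\mathfrak{d}=\mathfrak{a}_N$, $s=m/N$, $t=m'/N$, to get
$$ I\cdot\mathfrak{b}_{m+m'}=I\cdot\mathcal{J}(X,\Delta,\mathfrak{a}_N^{m/N}\mathfrak{a}_N^{m'/N})\subseteq\mathcal{J}(X,\Delta,\mathfrak{a}_N^{m/N})\cdot\mathcal{J}(X,\Delta,\mathfrak{a}_N^{m'/N})=\mathfrak{b}_m\cdot\mathfrak{b}_{m'}, $$
which is exactly the claim. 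The one point requiring care — and the reason the correction ideal $I$ is unavoidable — is that $I$ must be independent of $m$, $m'$ and of the auxiliary $N$; this is precisely the uniformity built into the singular subadditivity theorem, so that is the step I regard as the crux of the argument.
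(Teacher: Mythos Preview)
Your argument is correct and is the standard proof: Takagi's singular subadditivity theorem applied to a common sufficiently divisible truncation of the graded sequence. The paper itself gives no proof of this corollary---it is simply quoted from \cite[Lemma 1.58]{Xu4} (and ultimately rests on \cite{Tak}, which is in the bibliography)---so there is nothing to compare beyond noting that your write-up is exactly the argument one finds in those references.
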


\begin{lemma}[{cf. \cite[Lemma 1.59 (ii)]{Xu4}}] \label{lem:asymptotic2}
Let $(X,\Delta)$ be a klt pair, $\mathfrak{q}\subseteq \mathcal O_X$ an ideal, and $\mathfrak{a}_{\bullet}$ a graded sequence of ideals on $X$. Then we have
\begin{equation}\tag{$\star$}\label{star}
\lim_{m\to \infty}\{m\cdot \lct^{\mathfrak{q}}\left(X,\Delta,\mathfrak{b}_m\right)\}=\sup_{m\ge 1} \{m\cdot \lct^{\mathfrak{q}}(X,\Delta,\mathfrak{a}_{m})\}.
\end{equation}
\end{lemma}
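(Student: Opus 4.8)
The plan is to pin both sides of \eqref{star} to the valuative log canonical threshold of the graded sequence $\mathfrak{a}_{\bullet}$ itself. First I would recall, from the theory of Jonsson and Musta\c{t}\u{a} \cite{JM} (see also \cite[Chapter 11]{Laz}), that
$$\sup_{m\ge 1}\{m\cdot\lct^{\mathfrak{q}}(X,\Delta,\mathfrak{a}_{m})\}=\lim_{m\to\infty}\{m\cdot\lct^{\mathfrak{q}}(X,\Delta,\mathfrak{a}_{m})\}=\inf_{\nu}\frac{A_{X,\Delta}(\nu)+\nu(\mathfrak{q})}{\nu(\mathfrak{a}_{\bullet})},$$
where $\nu(\mathfrak{a}_{\bullet})=\inf_{m}\nu(\mathfrak{a}_{m})/m$ and the infimum on the right may be taken over divisorial valuations $\nu$ over $X$ with $\nu(\mathfrak{a}_{\bullet})>0$. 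If this common value is $+\infty$, then $\mathfrak{a}_{m}=\mathcal{O}_{X}$ for all $m\gg 0$, hence $\mathfrak{b}_{m}=\mathcal{O}_{X}$ for all $m\gg 0$ and both sides of \eqref{star} are $+\infty$; so I may assume it is finite, and it then suffices to show that $\lim_{m}\{m\cdot\lct^{\mathfrak{q}}(X,\Delta,\mathfrak{b}_{m})\}$ exists and equals $\inf_{\nu}\tfrac{A_{X,\Delta}(\nu)+\nu(\mathfrak{q})}{\nu(\mathfrak{a}_{\bullet})}$.

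The inequality ``$\ge$'' in \eqref{star} is the easy half: since $(X,\Delta)$ is klt, the definition of the asymptotic multiplier ideal gives $\mathfrak{a}_{m}\subseteq\mathfrak{b}_{m}$ for every $m$, whence $m\cdot\lct^{\mathfrak{q}}(X,\Delta,\mathfrak{b}_{m})\ge m\cdot\lct^{\mathfrak{q}}(X,\Delta,\mathfrak{a}_{m})$ and therefore $\liminf_{m}\{m\cdot\lct^{\mathfrak{q}}(X,\Delta,\mathfrak{b}_{m})\}$ is at least the common value above. For ``$\le$'' the crucial point is a lower bound on $\mathfrak{b}_{m}$: for every divisorial valuation $\nu=\mathrm{ord}_{E}$ over $X$,
$$\mathrm{ord}_{E}(\mathfrak{b}_{m})\ \ge\ m\cdot\mathrm{ord}_{E}(\mathfrak{a}_{\bullet})-A_{X,\Delta}(E).$$
To see this I would use that $\mathfrak{b}_{m}$ is the largest member of the chain $\{\mathcal{J}(X,\Delta,\mathfrak{a}_{rk}^{m/(rk)})\}_{k}$, so $\mathrm{ord}_{E}(\mathfrak{b}_{m})=\inf_{k}\mathrm{ord}_{E}(\mathcal{J}(X,\Delta,\mathfrak{a}_{rk}^{m/(rk)}))$, together with the elementary fact that, on a log resolution on which $E$ appears, $\mathrm{ord}_{E}(\mathcal{J}(X,\Delta,\mathfrak{c}^{t}))\ge t\cdot\mathrm{ord}_{E}(\mathfrak{c})-A_{X,\Delta}(E)$ for any ideal $\mathfrak{c}$ and any $t>0$; applying this with $\mathfrak{c}=\mathfrak{a}_{rk}$, $t=m/(rk)$ and using $\mathrm{ord}_{E}(\mathfrak{a}_{\bullet})=\inf_{k}\mathrm{ord}_{E}(\mathfrak{a}_{rk})/(rk)$ gives the bound.

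Granting the bound, I would fix a divisorial $\nu=\mathrm{ord}_{E}$ with $\nu(\mathfrak{a}_{\bullet})>0$; then for $m$ large enough that $m\,\nu(\mathfrak{a}_{\bullet})>A_{X,\Delta}(\nu)$ one has
$$m\cdot\lct^{\mathfrak{q}}(X,\Delta,\mathfrak{b}_{m})\ \le\ \frac{m\,(A_{X,\Delta}(\nu)+\nu(\mathfrak{q}))}{\nu(\mathfrak{b}_{m})}\ \le\ \frac{m\,(A_{X,\Delta}(\nu)+\nu(\mathfrak{q}))}{m\,\nu(\mathfrak{a}_{\bullet})-A_{X,\Delta}(\nu)}\ \xrightarrow{\,m\to\infty\,}\ \frac{A_{X,\Delta}(\nu)+\nu(\mathfrak{q})}{\nu(\mathfrak{a}_{\bullet})},$$
so $\limsup_{m}\{m\cdot\lct^{\mathfrak{q}}(X,\Delta,\mathfrak{b}_{m})\}\le\tfrac{A_{X,\Delta}(\nu)+\nu(\mathfrak{q})}{\nu(\mathfrak{a}_{\bullet})}$; taking the infimum over all such $\nu$ and invoking the identity of the first paragraph gives $\limsup_{m}\{m\cdot\lct^{\mathfrak{q}}(X,\Delta,\mathfrak{b}_{m})\}\le\inf_{\nu}\tfrac{A_{X,\Delta}(\nu)+\nu(\mathfrak{q})}{\nu(\mathfrak{a}_{\bullet})}$. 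Combined with ``$\ge$'', this forces $\lim_{m}\{m\cdot\lct^{\mathfrak{q}}(X,\Delta,\mathfrak{b}_{m})\}$ to exist and to equal the common value, which is \eqref{star}. (The existence of the limit can also be read off directly from Corollary \ref{cor:tak}: iterating $I\cdot\mathfrak{b}_{m+m'}\subseteq\mathfrak{b}_{m}\cdot\mathfrak{b}_{m'}$ to $I^{k-1}\cdot\mathfrak{b}_{km}\subseteq\mathfrak{b}_{m}^{k}$ shows that $m\mapsto\nu(\mathfrak{b}_{m})-\nu(I)$ is eventually superadditive, so $\lim_{m}\nu(\mathfrak{b}_{m})/m$ exists for every valuation $\nu$.)

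The step I expect to be the main obstacle is the input invoked in the first paragraph --- that the valuative threshold $\inf_{\nu}\tfrac{A_{X,\Delta}(\nu)+\nu(\mathfrak{q})}{\nu(\mathfrak{a}_{\bullet})}$ is approached by divisorial valuations and agrees with $\sup_{m}m\cdot\lct^{\mathfrak{q}}(X,\Delta,\mathfrak{a}_{m})$; the subtle point is the interchange of $\inf_{\nu}$ with the limit in $m$, which is precisely the content of the Jonsson--Musta\c{t}\u{a} theory. If one prefers to keep the argument self-contained, one can reprove exactly what is needed here using Lemma \ref{lem:lct as min} and Lemma \ref{lem:lct as inf}, by comparing the multiplier ideals $\mathcal{J}(X,\Delta,\mathfrak{b}_{m}^{t/m})$ with $\mathcal{J}(X,\Delta,\mathfrak{a}_{\bullet}^{t})$ --- more laborious, but routine.
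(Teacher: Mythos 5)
Your key estimate is exactly the engine of the paper's own proof: on a log resolution where $E$ appears, $\ord_E(\mathcal J(X,\Delta,\mathfrak c^{\,t}))\ge t\,\ord_E(\mathfrak c)-A_{X,\Delta}(E)$, which with $\mathfrak c=\mathfrak a_{m'}$, $t=m/m'$ gives $\ord_E(\mathfrak b_m)\ge \tfrac{m}{m'}\ord_E(\mathfrak a_{m'})-A_{X,\Delta}(E)$, and your limit computation is the paper's displayed chain. The genuine gap is in how you close the hard direction. You weaken the bound to $\ord_E(\mathfrak b_m)\ge m\,\ord_E(\mathfrak a_\bullet)-A_{X,\Delta}(E)$, take the infimum over divisorial $\nu$, and then invoke $\inf_{\nu\ \mathrm{divisorial}}\tfrac{A_{X,\Delta}(\nu)+\nu(\mathfrak q)}{\nu(\mathfrak a_\bullet)}=\sup_m m\,\lct^{\mathfrak q}(X,\Delta,\mathfrak a_m)$. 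Inside this paper that identity is Proposition \ref{prop:sup} (plus the divisorial refinement, which is what its proof actually produces), and Proposition \ref{prop:sup} is deduced \emph{from} the present lemma together with Corollary \ref{cor:tak}; so as a proof of this lemma your route is circular within the paper's development. Citing \cite{JM} removes the formal circularity only by importing, in the klt-pair and $\mathfrak q$-twisted generality, essentially the package this subsection is written to establish, and whose proof there runs through the same asymptotic multiplier ideal argument. Your fallback claim that the needed input is ``routine'' from Lemmas \ref{lem:lct as min} and \ref{lem:lct as inf} alone is not right: the hard inequality $\lct^{\mathfrak q}(X,\Delta,\mathfrak a_\bullet)\le\sup_m m\,\lct^{\mathfrak q}(X,\Delta,\mathfrak a_m)$, let alone its divisorial form, requires the uniform ideal of Corollary \ref{cor:tak}; note that the naive estimate $\nu(\mathfrak a_\bullet)\le\nu(\mathfrak a_m)/m$ only yields $\tfrac{A_{X,\Delta}(\nu)+\nu(\mathfrak q)}{\nu(\mathfrak a_\bullet)}\ge m\,\lct^{\mathfrak q}(X,\Delta,\mathfrak a_m)$, i.e.\ the wrong direction.

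The repair is immediate and turns your computation into the paper's proof: do not pass to $\ord_E(\mathfrak a_\bullet)$ and do not take an infimum. Fix a sufficiently divisible $m'$ and, by Lemma \ref{lem:lct as min}, a prime divisor $E$ computing $\lct^{\mathfrak q}(X,\Delta,\mathfrak a_{m'})$; your intermediate inequality then gives
\begin{equation*}
m\cdot\lct^{\mathfrak q}(X,\Delta,\mathfrak b_m)\ \le\ \frac{A_{X,\Delta}(E)+\ord_E(\mathfrak q)}{\frac{1}{m'}\ord_E(\mathfrak a_{m'})-\frac{1}{m}\bigl(A_{X,\Delta}(E)+\ord_E(\mathfrak q)\bigr)}\ \xrightarrow{\,m\to\infty\,}\ m'\cdot\lct^{\mathfrak q}(X,\Delta,\mathfrak a_{m'})\ \le\ \sup_{k\ge1}k\cdot\lct^{\mathfrak q}(X,\Delta,\mathfrak a_k),
\end{equation*}
so $\limsup\le\sup$ with no valuative identity for $\mathfrak a_\bullet$ needed. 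Finally, your reduction of the infinite case is incorrect as stated: $\sup_m m\,\lct^{\mathfrak q}(X,\Delta,\mathfrak a_m)=+\infty$ does not force $\mathfrak a_m=\mathcal O_X$ for $m\gg0$ (take $\mathfrak a_m=\mathfrak c^{\lceil\sqrt m\rceil}$ for a fixed nontrivial ideal $\mathfrak c$); that case is instead handled by strengthening the easy direction, e.g.\ via $\mathfrak b_m\supseteq\mathcal J(X,\Delta,\mathfrak a_{m'}^{m/m'})\supseteq\mathfrak a_{m'}^{\lceil m/m'\rceil}$, which gives $\liminf_m m\,\lct^{\mathfrak q}(X,\Delta,\mathfrak b_m)\ge m'\,\lct^{\mathfrak q}(X,\Delta,\mathfrak a_{m'})$ for every $m'$.
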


\begin{proof}
Since $\lct^{\mathfrak{q}}(X,\Delta,\mathfrak{b}^{\frac{1}{m}}_m)\ge \lct^{\mathfrak{q}}(X,\Delta,\mathfrak{a}^{\frac{1}{m}}_m)$ for any positive integer $m$, the inequality $\geq$ holds in (\ref{star}). We may also assume that  $\sup\limits_{m\ge 1} m\cdot \lct^{\mathfrak{q}}(X,\Delta,\mathfrak{a}_{m})<\infty$.  By Lemma \ref{lem:lct as inf}, there exists a divisorial valuation $\nu=\ord_E$ which computes $\lct^{\mathfrak{q}}(X,\Delta,\mathfrak{a}_{m'})$ so that $\lct^{\mathfrak{q}}(X,\Delta,\mathfrak{a}_{m'})=\frac{A_{X,\Delta}(E)+\ord_E(\mathfrak{q})}{\ord_E(\mathfrak{a}_{m'})}$. Thus we have
\begin{align*}
    \lct^{\mathfrak{q}}(X,\Delta,\mathfrak{b}^{\frac{1}{m}}_m)&\le \frac{A_{X,\Delta}(E)+\ord_E(\mathfrak{q})}{\frac{1}{m}\ord_E(\mathfrak{b}_m)}
    \\ &\le \frac{A_{X,\Delta}(E)+\ord_E(\mathfrak{q})}{\frac{1}{m'}\ord_E(\mathfrak{a}_{m'})-\frac{1}{m}(A_{X,\Delta}(E)+\ord_E(\mathfrak{q}))}
    \\ &=\frac{m\cdot \lct^{\mathfrak{q}}(X,\Delta,\mathfrak{a}^{\frac{1}{m'}}_{m'})}{m-\lct^{\mathfrak{q}}(X,\Delta,\mathfrak{a}^{\frac{1}{m'}}_{m'})}
\end{align*}
for any sufficiently divisible $m'$. By letting $m\to \infty$, we obtain the inequality $\leq$ in (\ref{star}).
\end{proof}

\begin{proposition}\label{prop:sup}
Let $(X,\Delta)$ be a klt pair, $\mathfrak{q}\subseteq \mathcal{O}_X$ an ideal, and $\mathfrak{a}_{\bullet}$ a graded sequence of ideals of $\mathcal{O}_X$. Then we have
$$ \lct^{\mathfrak{q}}(X,\Delta,\mathfrak{a}_{\bullet})=\sup_{m\ge 1}\{m\cdot \lct^{\mathfrak{q}}(X,\Delta,\mathfrak{a}_m)\}.$$
\end{proposition}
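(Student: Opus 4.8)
The plan is to prove the two inequalities separately; write $c\coloneqq\lct^{\mathfrak{q}}(X,\Delta,\mathfrak{a}_\bullet)$ and $s\coloneqq\sup_{m\geq 1}m\cdot\lct^{\mathfrak{q}}(X,\Delta,\mathfrak{a}_m)$, and recall that $\nu(\mathfrak{a}_\bullet)=\inf_{m}\nu(\mathfrak{a}_m)/m$ for $\nu\in\Val^{*}_X$. The inequality $c\geq s$ is the elementary half: since $\mathfrak{a}_\bullet$ is a graded sequence, $m\mapsto\nu(\mathfrak{a}_m)$ is subadditive, so $\nu(\mathfrak{a}_\bullet)\leq\nu(\mathfrak{a}_m)/m$ for every $m$, whence
\[
\frac{A_{X,\Delta}(\nu)+\nu(\mathfrak{q})}{\nu(\mathfrak{a}_\bullet)}\ \geq\ m\cdot\frac{A_{X,\Delta}(\nu)+\nu(\mathfrak{q})}{\nu(\mathfrak{a}_m)}\ \geq\ m\cdot\lct^{\mathfrak{q}}(X,\Delta,\mathfrak{a}_m)
\]
for all $\nu\in\Val^{*}_X$ and all $m$; taking $\inf_\nu$ and then $\sup_m$ gives $c\geq s$. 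This is just the trivial bound $\inf_\nu\sup_m\geq\sup_m\inf_\nu$ for $(\nu,m)\mapsto m(A_{X,\Delta}(\nu)+\nu(\mathfrak{q}))/\nu(\mathfrak{a}_m)$, and the content of the Proposition is the reverse exchange.

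For $c\leq s$ I would pass to the asymptotic multiplier ideals $\mathfrak{b}_m=\mathcal{J}(X,\Delta,\mathfrak{a}^m_{\bullet})$. By Lemma~\ref{lem:asymptotic2}, $s=\lim_{m}m\cdot\lct^{\mathfrak{q}}(X,\Delta,\mathfrak{b}_m)$ (the case $s=\infty$ being trivial since $c\geq s$), so it suffices to show $c\leq\liminf_{m}m\cdot\lct^{\mathfrak{q}}(X,\Delta,\mathfrak{b}_m)$. The key ingredient is the two-sided estimate
\[
m\,\nu(\mathfrak{a}_\bullet)-A_{X,\Delta}(\nu)\ \leq\ \nu(\mathfrak{b}_m)\ \leq\ m\,\nu(\mathfrak{a}_\bullet)+\nu(I)\qquad(\nu\in\Val^{*}_X,\ m\geq 1),
\]
where $I$ is the nonzero ideal from Corollary~\ref{cor:tak}. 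For the lower bound: fix $m$, choose $p\gg 0$ with $\mathfrak{b}_m=\mathcal{J}(X,\Delta,\mathfrak{a}_{rp}^{m/(rp)})$, and apply the description of this multiplier ideal by log discrepancies (as in the proof of Lemma~\ref{lem:lct as inf}, extended to arbitrary $\nu$ by the retraction argument used for Lemma~\ref{lem:lct as min}): every local section $g$ of $\mathfrak{b}_m$ satisfies $A_{X,\Delta}(\nu)+\nu(g)>\tfrac{m}{rp}\nu(\mathfrak{a}_{rp})\geq m\,\nu(\mathfrak{a}_\bullet)$. For the upper bound: iterating Corollary~\ref{cor:tak} yields $I^{\,k-1}\mathfrak{b}_{km}\subseteq\mathfrak{b}_m^{\,k}$, and since $\mathfrak{a}_{km}\subseteq\mathfrak{b}_{km}$ this gives $k\,\nu(\mathfrak{b}_m)\leq\nu(\mathfrak{b}_{km})+(k-1)\nu(I)\leq\nu(\mathfrak{a}_{km})+(k-1)\nu(I)$; dividing by $k$ and letting $k\to\infty$, with $\nu(\mathfrak{a}_{km})/k\to m\,\nu(\mathfrak{a}_\bullet)$, gives the bound. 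The upper bound now yields $\nu(\mathfrak{b}_m)/m\leq\nu(\mathfrak{a}_\bullet)+\nu(I)/m$, hence
\[
m\cdot\lct^{\mathfrak{q}}(X,\Delta,\mathfrak{b}_m)=\inf_{\nu}\frac{A_{X,\Delta}(\nu)+\nu(\mathfrak{q})}{\nu(\mathfrak{b}_m)/m}\ \geq\ \inf_{\nu}\frac{A_{X,\Delta}(\nu)+\nu(\mathfrak{q})}{\nu(\mathfrak{a}_\bullet)+\nu(I)/m},
\]
and the problem reduces to showing that the right-hand side converges to $c$ as $m\to\infty$.

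I expect this passage to the limit to be the main obstacle: the perturbation $\nu(I)/m$ vanishes only pointwise in $\nu$, not uniformly, because $\nu\mapsto\nu(I)$ is unbounded on $\Val^{*}_X$. The remedy I have in mind uses Lemma~\ref{lem:lct as min}: each $\lct^{\mathfrak{q}}(X,\Delta,\mathfrak{b}_m)$ is computed by a \emph{divisorial} valuation $\nu_m$, which, normalized so that $\nu_m(\mathfrak{b}_m)=m$, satisfies $A_{X,\Delta}(\nu_m)+\nu_m(\mathfrak{q})=m\cdot\lct^{\mathfrak{q}}(X,\Delta,\mathfrak{b}_m)$; in particular $A_{X,\Delta}(\nu_m)$ stays bounded. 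By the weak properness of $\{A_{X,\Delta}\leq M\}\subseteq\Val_X$ (cf. \cite{JM},\cite{BFJ}) the sequence $(\nu_m)$ has a limit point $\nu_\infty$; the two-sided estimate forces $\nu_\infty(\mathfrak{a}_\bullet)\geq 1$ (so $\nu_\infty\in\Val^{*}_X$), and lower semicontinuity of $A_{X,\Delta}$ together with continuity of $\nu\mapsto\nu(\mathfrak{q})$ gives $A_{X,\Delta}(\nu_\infty)+\nu_\infty(\mathfrak{q})\leq s$; therefore $c\leq\bigl(A_{X,\Delta}(\nu_\infty)+\nu_\infty(\mathfrak{q})\bigr)/\nu_\infty(\mathfrak{a}_\bullet)\leq s$. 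Together with the first paragraph this gives $c=s$.
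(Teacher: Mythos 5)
Your easy inequality and the reduction to the asymptotic multiplier ideals $\mathfrak{b}_m$ via Corollary~\ref{cor:tak} and Lemma~\ref{lem:asymptotic2} follow the paper, and your two-sided estimate is essentially the paper's inequality $\tfrac{1}{n!}\ord_E(\mathfrak{b}_{n!})\le\tfrac{1}{n!}\ord_E(I)+\ord_E(\mathfrak{a}_{\bullet})$. The gap is in your final limit step. The existence of a limit point of $(\nu_m)$ inside $\Val^{*}_X$ does not follow from any ``weak properness'' of $\{A_{X,\Delta}\le M\}$ in \cite{JM} or \cite{BFJ}: no such compactness statement is proved there, and the sublevel set $\{A_{X,\Delta}\le M\}$ is not known to be (sequentially) compact in $\Val_X$. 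The compactness result that is actually available --- and the one this paper itself invokes in the proof of Theorem~\ref{thrm:1}, namely \cite[Proposition 3.9]{LX2} --- requires both a fixed center $x$ and a uniform lower bound $\nu(\mathfrak{m}_x)\ge\delta$; your valuations $\nu_m$ satisfy neither (their centers may wander over $X$, and you impose no positivity normalization that survives the limit), so even a cluster point in a suitable compactification could degenerate to the trivial valuation or fail to lie in $\Val_X$. Moreover, your assertion that ``the two-sided estimate forces $\nu_\infty(\mathfrak{a}_{\bullet})\ge 1$'' needs $\nu_m(I)/m\to 0$, hence a bound on $\nu_m(I)$; with your normalization this is exactly the Izumi-type klt bound $\nu(I)\le C\,A_{X,\Delta}(\nu)$, which you never invoke. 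So the hard direction, as written, is not complete.

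It is worth noting how the paper closes the very uniformity problem you identified, without any limit of valuations: since $(X,\Delta)$ is klt one fixes a constant $C$ with $\ord_E(I)\le C\,A_{X,\Delta}(E)$ for every prime divisor $E$ over $X$, so the error term $\ord_E(I)/n!$ is at most $(C/n!)\bigl(A_{X,\Delta}(E)+\ord_E(\mathfrak{q})\bigr)$, i.e.\ uniformly small \emph{relative to the numerator} of the lct ratio rather than absolutely. Evaluating at the single divisorial valuation $E$ computing $\lct^{\mathfrak{q}}(X,\Delta,\mathfrak{b}_{n!})$ (Lemma~\ref{lem:lct as min}) then yields $\tfrac{1}{c}\ge\tfrac{\ord_E(\mathfrak{a}_{\bullet})}{A_{X,\Delta}(E)+\ord_E(\mathfrak{q})}\ge\tfrac{1}{s}-\varepsilon$ directly, with no compactness needed. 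If you insert this relative bound into your own setup, the displayed infimum comparison already gives $m\cdot\lct^{\mathfrak{q}}(X,\Delta,\mathfrak{b}_m)\ge c\cdot(1+C/m)^{-1}$-type estimates and the limit-point argument can be discarded; as it stands, that step is a genuine gap.
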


\begin{proof}
We argue similarly as in the proof of \cite[Lemma 1.60]{Xu4}. For simplicity, we denote by
$\lct^{\mathfrak{q}}_s(X,\Delta,\mathfrak{a}_{\bullet})\coloneqq\sup\limits_{m\ge 1}\{m\cdot \lct^{\mathfrak{q}}(X,\Delta,\mathfrak{a}_m)\}.$

Since we have $\nu(\mathfrak{a}_{\bullet})\le \frac{\nu(\mathfrak{a}_m)}{m}$ for any $\nu\in \mathrm{Val}_X$, we obtain the inequality $\lct^{\mathfrak{q}}(X,\Delta,\mathfrak{a}_{\bullet})\geq m\cdot\lct^{\mathfrak{q}}(X,\Delta, \mathfrak{a}_m).$
Thus, we obtain
$$\lct^{\mathfrak{q}}(X,\Delta,\mathfrak{a}_{\bullet})\ge \lct^{\mathfrak{q}}_s(X,\Delta,\mathfrak{a}_{\bullet}).$$

On the other hand, we may assume that $\lct^{\mathfrak{q}}_s(X,\Delta,\mathfrak{a}_m)<\infty$. By Corollary \ref{cor:tak}, for any positive integer $n$, there is an ideal $I\subseteq \mathcal{O}_X$ such that
$$ I^n\cdot \mathfrak{b}_{(n+1)!}\subseteq (\mathfrak{b}_{n!})^{n+1}.$$
Therefore, by \cite[Lemma 1.59 (i)]{Xu4}, we have the following inequalities
\begin{equation*}
\begin{aligned}
\frac{1}{n!}\ord_E(\mathfrak{b}_{n!})&\le \frac{n}{(n+1)!}\ord_E(I)+\frac{1}{(n+1)!}\ord_E(\mathfrak{b}_{(n+1)!})
\\ & \le \sum_{n'\ge n}\frac{n'}{(n'+1)!}\ord_E(I)+\ord_E(\mathfrak{a}_{\bullet})
\\ & = \frac{1}{n!}\ord_E(I)+\ord_E(\mathfrak{a}_{\bullet}).
\end{aligned}
\end{equation*}
Fix a constant $C$ such that $\ord_E(I)\le C\cdot A_{X,\Delta}(E)$ for any prime divisor $E$ over $X$. Then by Lemma \ref{lem:asymptotic2}, for any $\varepsilon>0$ there exists a positive integer $n$ such that $\frac{C}{n!}<\frac{\varepsilon}{2}$ and
$$ \left|\frac{1}{n!\cdot \lct^{\mathfrak{q}}(X,\Delta,\mathfrak{b}_{n!})}-\frac{1}{\lct^{\mathfrak{q}}_s(X,\Delta,\mathfrak{a}_{\bullet})}\right|\le \frac{\varepsilon}{2}.$$
For any divisor $E$ computing $\lct^{\mathfrak{q}}(X,\Delta,\mathfrak{b}_{n!})$, by the above inequality we have
$$ \frac{1}{n! \cdot \lct^{\mathfrak{q}}(X,\Delta,\mathfrak{b}_{n!})}=\frac{\ord_E(\mathfrak{b}_{n!})}{n!\cdot A_{X,\Delta}(E)+n!\ord_E(\mathfrak{q})}\le \frac{\ord_E(\mathfrak{a}_{\bullet})}{A_{X,\Delta}(E)+\ord_E(\mathfrak{q})}+\frac{\varepsilon}{2}.$$
Thus, we obtain the following:

\begin{small}
\begin{align*}
0&\le \frac{1}{\lct^{\mathfrak{q}}_s(X,\Delta,\mathfrak{a}_{\bullet})}-\frac{\ord_E(\mathfrak{a}_{\bullet})}{A_{X,\Delta}(E)+\ord_E(\mathfrak{q})}&
\\ &=\left(\frac{1}{\lct^{\mathfrak{q}}_s(X,\Delta,\mathfrak{a}_{\bullet})}-\frac{1}{n!\cdot \lct^{\mathfrak{q}}(X,\Delta,\mathfrak{b}_{n!})}\right)+\left(\frac{1}{n!\cdot \lct^{\mathfrak{q}}(X,\Delta,\mathfrak{b}_{n!})}-\frac{\ord_E(\mathfrak{a}_{\bullet})}{A_{X,\Delta}(E)+\ord_E(\mathfrak{q})}\right)&
\\ &\le \varepsilon.
\end{align*}
\end{small}
This implies $\lct^{\mathfrak{q}}(X,\Delta,\mathfrak{a}_{\bullet})=\sup_{m\ge 1}\{m\cdot \lct^{\mathfrak{q}}(X,\Delta,\mathfrak{a}_m)\}.$
\end{proof}

\begin{theorem}[{cf. \cite[Proposition 2.12, Corollary 6.9]{JM}}] \label{thrm:sup and asymptotic}
Let $(X,\Delta)$ be a pair, $\mathfrak{q}\subseteq \mathcal{O}_X$ an ideal, and $\mathfrak{a}_{\bullet}$ a graded sequence of ideals of $\mathcal{O}_X$. Then
$$ \lct^{\mathfrak{q}}(X,\Delta,\mathfrak{a}_{\bullet})=\inf\{\lambda\mid \mathfrak{q}\nsubseteq \mathcal{J}(X,\Delta,\mathfrak{a}^{\lambda}_{\bullet})\}.$$
\end{theorem}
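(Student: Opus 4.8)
The plan is to reduce the asymptotic statement to the single-ideal case treated in Lemma~\ref{lem:lct as inf} by combining the characterization of $\lct^{\mathfrak{q}}(X,\Delta,\mathfrak{a}_\bullet)$ as a supremum (Proposition~\ref{prop:sup}) with the behavior of asymptotic multiplier ideals along the graded sequence. Recall $\mathfrak{b}_m = \mathcal{J}(X,\Delta,\mathfrak{a}^m_\bullet)$. First I would observe that for a graded sequence one always has $\mathcal{J}(X,\Delta,\mathfrak{a}^\lambda_\bullet) = \mathcal{J}(X,\Delta,\mathfrak{b}_m^{\lambda/m})$ for all sufficiently divisible $m$ — this is the standard stabilization property of asymptotic multiplier ideals (cf.~\cite[Chapter 11]{Laz}), essentially because $\mathfrak{b}_m^{1/m}$ already ``sees'' the whole sequence. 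Granting this, $\mathfrak{q}\nsubseteq \mathcal{J}(X,\Delta,\mathfrak{a}^\lambda_\bullet)$ is equivalent to $\mathfrak{q}\nsubseteq \mathcal{J}(X,\Delta,\mathfrak{b}_m^{\lambda/m})$ for $m$ sufficiently divisible, and Lemma~\ref{lem:lct as inf} rewrites the infimum over such $\lambda$ as $m\cdot\lct^{\mathfrak{q}}(X,\Delta,\mathfrak{b}_m)$.

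Next I would take the limit. By the stabilization just described, $\inf\{\lambda\mid \mathfrak{q}\nsubseteq \mathcal{J}(X,\Delta,\mathfrak{a}^\lambda_\bullet)\}$ equals $\sup_{m}\{m\cdot\lct^{\mathfrak{q}}(X,\Delta,\mathfrak{b}_m)\}$, where the $\sup$ is over sufficiently divisible $m$ (and by monotonicity $\mathfrak{b}_m\cdot\mathfrak{b}_{m'}\subseteq\mathfrak{b}_{m+m'}$, up to the ideal $I$ of Corollary~\ref{cor:tak}, this $\sup$ is a genuine limit). But by Lemma~\ref{lem:asymptotic2},
\[
\lim_{m\to\infty}\{m\cdot\lct^{\mathfrak{q}}(X,\Delta,\mathfrak{b}_m)\} = \sup_{m\ge 1}\{m\cdot\lct^{\mathfrak{q}}(X,\Delta,\mathfrak{a}_m)\},
\]
and the right-hand side is exactly $\lct^{\mathfrak{q}}(X,\Delta,\mathfrak{a}_\bullet)$ by Proposition~\ref{prop:sup}. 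Chaining these identities gives the theorem. I should be a little careful that Lemma~\ref{lem:lct as inf} and Lemma~\ref{lem:asymptotic2} are stated for klt pairs, whereas the theorem allows an arbitrary pair; the klt hypothesis can be removed here because the statement is local and one may pass to $\lct^{\mathfrak{q}}$ absorbing the non-klt part into $\mathfrak{q}$, or simply note that both sides are defined by the same valuative infimum and the multiplier-ideal computation only used the log resolution, not positivity of discrepancies — I would spell out whichever reduction is cleanest.

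An alternative, more self-contained route avoids invoking stabilization and instead argues directly on a fixed log resolution: pick $f\colon Y\to X$ resolving $(X,\Delta)$, $\mathfrak{q}$, and enough of the $\mathfrak{a}_m$'s, let $E_1,\dots,E_r$ carry $\Supp f^{-1}_*\Delta\cup\Supp(\mathfrak{q}\cdot\bigcup\mathfrak{b}_m)\mathcal{O}_Y$, and use that $\mathcal{J}(X,\Delta,\mathfrak{a}^\lambda_\bullet)$ consists of local sections with $-\ord_{E_i}(f)\le A_{X,\Delta}(E_i)-\lambda\cdot\ord_{E_i}(\mathfrak{a}_\bullet)$ for all $i$, where $\ord_{E_i}(\mathfrak{a}_\bullet)=\inf_m \ord_{E_i}(\mathfrak{a}_m)/m$. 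Then $\mathfrak{q}\nsubseteq \mathcal{J}(X,\Delta,\mathfrak{a}^\lambda_\bullet)$ iff $\lambda>\frac{A_{X,\Delta}(E_i)+\ord_{E_i}(\mathfrak{q})}{\ord_{E_i}(\mathfrak{a}_\bullet)}$ for some $i$, and the infimum of such $\lambda$ is $\min_i \frac{A_{X,\Delta}(E_i)+\ord_{E_i}(\mathfrak{q})}{\ord_{E_i}(\mathfrak{a}_\bullet)}$. One then checks, via the retraction and linearity argument of Lemma~\ref{lem:lct as min}, that this minimum over these finitely many divisorial valuations already equals the valuative infimum defining $\lct^{\mathfrak{q}}(X,\Delta,\mathfrak{a}_\bullet)$. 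The main obstacle in this second approach, and the point deserving the most care, is that finitely many divisors $E_i$ on a \emph{single} resolution need not suffice to compute the asymptotic invariant $\ord_\bullet(\mathfrak{a}_\bullet)$ simultaneously with $\lct^{\mathfrak{q}}(X,\Delta,\mathfrak{a}_\bullet)$ — the graded sequence can force the computing valuation off any fixed model — so one genuinely needs the asymptotic multiplier ideal stabilization (or Proposition~\ref{prop:sup}) to control the tail. For that reason I expect the first route, routing everything through Proposition~\ref{prop:sup} and Lemma~\ref{lem:asymptotic2}, to be the cleanest, with the only real work being the verification of the asymptotic multiplier ideal identity $\mathcal{J}(X,\Delta,\mathfrak{a}^\lambda_\bullet)=\mathcal{J}(X,\Delta,\mathfrak{b}_m^{\lambda/m})$ for sufficiently divisible $m$.
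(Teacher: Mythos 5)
Your first route contains a genuine gap at its crux. The identity you propose to ``grant'', $\mathcal{J}(X,\Delta,\mathfrak{a}_\bullet^{\lambda})=\mathcal{J}(X,\Delta,\mathfrak{b}_m^{\lambda/m})$ with $\mathfrak{b}_m=\mathcal{J}(X,\Delta,\mathfrak{a}_\bullet^{m})$, is not the standard stabilization property: the standard statement (and the definition used in this paper) is $\mathcal{J}(X,\Delta,\mathfrak{a}_\bullet^{\lambda})=\mathcal{J}(X,\Delta,\mathfrak{a}_m^{\lambda/m})$ for sufficiently divisible $m$, with the members $\mathfrak{a}_m$ of the graded sequence, not with the asymptotic multiplier ideals $\mathfrak{b}_m$. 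Your version is in fact false. Take $X$ a smooth surface, $\Delta=0$, $\mathfrak{a}_m=\mathfrak{m}^m$ the powers of the maximal ideal of a closed point, and $\lambda=2$: then $\mathcal{J}(\mathfrak{a}_\bullet^{2})=\mathcal{J}(\mathfrak{m}^{2})=\mathfrak{m}$, while $\mathfrak{b}_m=\mathcal{J}(\mathfrak{m}^m)=\mathfrak{m}^{m-1}$ and $\mathcal{J}(\mathfrak{b}_m^{2/m})=\mathcal{J}(\mathfrak{m}^{2(m-1)/m})=\mathcal{O}_X$ for every $m$, since $2(m-1)/m<2$. The same example breaks the next two steps: with $\mathfrak{q}=\mathcal{O}_X$ one has $\inf\{\lambda\mid \mathfrak{q}\nsubseteq\mathcal{J}(\mathfrak{a}_\bullet^{\lambda})\}=2$, whereas $m\cdot\lct^{\mathfrak{q}}(X,\Delta,\mathfrak{b}_m)=2+\tfrac{2}{m-1}>2$ for every $m$, so the infimum is not equal to $m\cdot\lct^{\mathfrak{q}}(X,\Delta,\mathfrak{b}_m)$ for any fixed sufficiently divisible $m$, nor to $\sup_m m\cdot\lct^{\mathfrak{q}}(X,\Delta,\mathfrak{b}_m)$; moreover the sequence $m\cdot\lct^{\mathfrak{q}}(X,\Delta,\mathfrak{b}_m)$ is decreasing here, so your claim that ``by monotonicity the sup is a genuine limit'' also fails, and Lemma \ref{lem:asymptotic2} only controls the limit. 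Your second, resolution-based route you have already diagnosed correctly: a single log resolution cannot compute the asymptotic invariants, so it cannot be completed as stated.

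The repair is to delete the $\mathfrak{b}_m$'s altogether, and then you land on the paper's actual proof, which is shorter than your route: it uses only Proposition \ref{prop:sup}, Lemma \ref{lem:lct as inf} applied to the ideals $\mathfrak{a}_m$ themselves, and the defining property $\mathcal{J}(\mathfrak{a}_\bullet^{\lambda})=\mathcal{J}(\mathfrak{a}_m^{\lambda/m})$ for sufficiently divisible $m$. Concretely, if $\lambda<\lct^{\mathfrak{q}}(X,\Delta,\mathfrak{a}_\bullet)=\sup_m m\cdot\lct^{\mathfrak{q}}(X,\Delta,\mathfrak{a}_m)$, then $\lct^{\mathfrak{q}}(X,\Delta,\mathfrak{a}_{m})>\lambda/m$ for all sufficiently divisible $m$, and Lemma \ref{lem:lct as inf} gives $\mathfrak{q}\subseteq\mathcal{J}(\mathfrak{a}_{m}^{\lambda/m})=\mathcal{J}(\mathfrak{a}_\bullet^{\lambda})$; conversely, $\lct^{\mathfrak{q}}(X,\Delta,\mathfrak{a}_m)\le\lct^{\mathfrak{q}}(X,\Delta,\mathfrak{a}_\bullet)/m$ for every $m$, so Lemma \ref{lem:lct as inf} yields $\mathfrak{q}\nsubseteq\mathcal{J}(\mathfrak{a}_m^{\lambda/m})$ for all $m$ once $\lambda\ge\lct^{\mathfrak{q}}(X,\Delta,\mathfrak{a}_\bullet)$, hence $\mathfrak{q}\nsubseteq\mathcal{J}(\mathfrak{a}_\bullet^{\lambda})$ because the asymptotic multiplier ideal equals one of the $\mathcal{J}(\mathfrak{a}_m^{\lambda/m})$. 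No multiplier ideals of multiplier ideals, no subadditivity, and no Lemma \ref{lem:asymptotic2} are needed. Your side remark about the klt hypothesis is fair but peripheral: Lemma \ref{lem:lct as min} is proved for arbitrary pairs and the multiplier-ideal computations only use a log resolution, which is how the paper applies them here.
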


\begin{proof}
Let us first prove the following inequality
\begin{equation*}
\lct^{\mathfrak{q}}(X,\Delta,\mathfrak{a}_{\bullet})\le \inf \{\lambda\mid \mathfrak{q}\nsubseteq \mathcal{J}(X,\Delta,\mathfrak{a}^{\lambda}_{\bullet})\}.
\end{equation*}
By Proposition \ref{prop:sup}, we have
$\lct^{\mathfrak{q}}(X,\Delta,\mathfrak{a}_{\bullet})=\sup\limits_m \{m\cdot \lct^{\mathfrak{q}}(X,\Delta,\mathfrak{a}_m)\}.$
Let $\lambda$ be a positive number such that $\lct^{\mathfrak{q}}(X,\Delta,\mathfrak{a}_{\bullet})>\lambda$. Then we have $\lct^{\mathfrak{q}}(X,\Delta,\mathfrak{a}_{m_0})>\frac{\lambda}{m_0}$ and Lemma \ref{lem:lct as inf} gives us
$$\mathfrak{q}\subseteq \mathcal{J}(X,\Delta,\mathfrak{a}^{\lambda'}_{\bullet})=\mathcal{J}\left(X,\Delta,\mathfrak{a}^{\frac{\lambda'}{m_0}}_{m_0}\right)$$
for any $\lambda'\le \lambda$ and any sufficiently large $m_0$. This proves the above inequality.

For the converse, note that the following inequality holds:
$$ \lct^{\mathfrak{q}}(X,\Delta,\mathfrak{a}_{m})\le \frac{\lct^{\mathfrak{q}}(X,\Delta,\mathfrak{a}_{\bullet})}{m}$$
for all positive integers $m$. Therefore, Lemma \ref{lem:lct as inf} implies $\mathfrak{q}\nsubseteq \mathcal{J}\left(X,\Delta,\mathfrak{a}^{\frac{\lct^{\mathfrak{q}}(X,\Delta,\mathfrak{a}_{\bullet})}{m}}_m\right).$
Thus, by the definition of asymptotic multiplier ideal, we have $\mathfrak{q}\nsubseteq \mathcal{J}\left(X,\Delta,\mathfrak{a}^{\lct^{\mathfrak{q}}(X,\Delta,\mathfrak{a}_{\bullet})}_{\bullet}\right)$. Hence, the following reverse inequality is also proved
\[ \lct^{\mathfrak{q}}(X,\Delta,\mathfrak{a}_{\bullet})\ge \inf \{\lambda\mid \mathfrak{q}\nsubseteq \mathcal{J}(X,\Delta,\mathfrak{a}^{\lambda}_{\bullet})\}. \qedhere\]
\end{proof}

\medskip

\subsection{Asymptotic multiplier ideal for $\Q$-divisors}
Let us recall the definition of asymptotic multiplier ideals for a $\Q$-divisor $D$. For details, see \cite{Laz}, \cite{Leh}, \cite{CJK}.

\begin{definition}
Let $(X,\Delta)$ be a projective klt pair and $D$ a pseudoeffective $\Q$-divisor on $X$. Consider the following graded sequence of ideals
$$ \mathfrak{a}_{m}\coloneqq\begin{cases}\mathfrak{b}(|mD|), & \text{if }mD\text{ is Cartier}, \\ (0),& \text{otherwise}.\end{cases}$$
We define
$\mathcal{J}(X,\Delta,\lambda\|D\|)\coloneqq\mathcal{J}(X,\Delta,\mathfrak{a}^{\lambda}_{\bullet})$
for any positive number $\lambda>0$.
\end{definition}

Let us prove the following Nadel type vanishing theorem, which essentially follows from \cite[Theorem 11.2.12]{Laz}.

\begin{proposition} \label{prop:Nadel}
Let $(X,\Delta)$ be a projective klt pair and $D$ an effective $\Q$-divisor on $X$. Let $L$ be a Cartier divisor such that $L=K_X+\Delta+\lambda D+A$ for an ample $\R$-divisor $A$ on $X$. Fix a positive number $\lambda>0$. Then we have
$$ H^i(X,\mathcal{O}_X(L)\otimes \mathcal{J}(X,\Delta,\lambda\|D\|))=0\text{ for any }i>0.$$
\end{proposition}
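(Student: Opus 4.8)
The plan is to reduce to the finite-level statement via the definition of the asymptotic multiplier ideal as a maximal element, and then apply the usual Nadel vanishing theorem for multiplier ideals of $\Q$-divisors. First I would fix a positive integer $p$ with $pD$ Cartier and recall that $\mathcal{J}(X,\Delta,\lambda\|D\|)=\mathcal{J}(X,\Delta,\mathfrak{a}_{\bullet}^{\lambda})$ where $\mathfrak{a}_m=\mathfrak{b}(|mD|)$ for $p\mid m$ and $(0)$ otherwise; by construction this asymptotic multiplier ideal equals $\mathcal{J}(X,\Delta,\mathfrak{a}_{pm}^{\lambda/(pm)})$ for all sufficiently divisible $m$ (the chain stabilizes), so it suffices to prove the vanishing with $\mathcal{J}(X,\Delta,\lambda\|D\|)$ replaced by $\mathcal{J}(X,\Delta,\tfrac{\lambda}{pm}\cdot |pmD|)$ for one such $m$. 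Writing $q=pm$, we have $L=K_X+\Delta+\lambda D+A$ with $A$ ample, and $\lambda D\sim_{\Q}\tfrac{\lambda}{q}(qD)$, so $L-K_X-\Delta$ is numerically equivalent to $\tfrac{\lambda}{q}(qD)+A$ with $|qD|$ a linear system and $A$ ample.

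Next I would invoke the local vanishing / Nadel vanishing package as in \cite[Theorem 11.2.12]{Laz} (in the form for pairs with boundary $\Delta$, which is standard: take a common log resolution of $(X,\Delta)$ and of $\mathfrak{b}(|qD|)$, push forward, and use Kawamata--Viehweg on the resolution). Concretely, on a log resolution $f\colon Y\to X$ of $(X,\Delta)$ and $\mathfrak{b}(|qD|)$ with $\mathfrak{b}(|qD|)\cdot\mathcal{O}_Y=\mathcal{O}_Y(-F)$, one writes $f^{*}L$ as $K_Y+\{f^{*}(\tfrac{\lambda}{q}F)\}$-boundary plus $f^{*}A$ (ample pulled back) plus an integral divisor equal to $\lceil K_Y-f^{*}(K_X+\Delta)-\tfrac{\lambda}{q}F\rceil$, and applies the relative Kawamata--Viehweg vanishing for $f$ together with the Leray spectral sequence. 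The key point that $f^{*}A$ is nef and big and that the fractional part is supported on an snc divisor is exactly the hypothesis needed; here I would be slightly careful that $A$ is only an ample $\R$-divisor, so I perturb: choose an effective $\Q$-divisor $A'\sim_{\R}A$ (or rather replace $A$ by an ample $\Q$-divisor $A'$ with $A-A'$ still ample and $A'$ close enough that the multiplier ideal $\mathcal{J}(X,\Delta,\lambda\|D\|)$ is unchanged and $L-(K_X+\Delta+\lambda D+A')$ stays ample) — a standard maneuver, so I would not belabor it.

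The remaining step is bookkeeping: since $\mathcal{O}_X(L)\otimes\mathcal{J}(X,\Delta,\lambda\|D\|)=f_{*}\mathcal{O}_Y(K_Y+\lceil f^{*}(L-K_X-\Delta)-\tfrac{\lambda}{q}F\rceil)$ and $R^{j}f_{*}$ of that sheaf vanishes for $j>0$ by local vanishing (again \cite[9.4.1]{Laz}-type statement extended to the pair setting), the Leray spectral sequence degenerates and $H^{i}(X,\mathcal{O}_X(L)\otimes\mathcal{J})\cong H^{i}(Y,\mathcal{O}_Y(K_Y+P))$ where $P$ is a divisor such that $P-K_Y$ is the sum of an snc-supported fractional $\Q$-divisor and a nef and big $\Q$-divisor; Kawamata--Viehweg then kills $H^{i}$ for $i>0$. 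I expect the main (and really only) obstacle to be purely technical: making sure the boundary-divisor version of \cite[Theorem 11.2.12]{Laz} applies verbatim with a klt $\Delta$ present and with $A$ an $\R$-divisor rather than an integral ample one, i.e. that the "$+A$ ample" slack genuinely absorbs the fractional parts and the $\R$-coefficients; this is routine but should be stated cleanly. There is no conceptual difficulty beyond citing \cite[Theorem 11.2.12]{Laz} and \cite[Chapter 9]{Laz} in the pair setting.
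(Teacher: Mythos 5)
Your proposal is correct and follows essentially the same route as the paper: reduce to a single sufficiently divisible level $m$ with $\mathcal{J}(X,\Delta,\lambda\|D\|)=\mathcal{J}(X,\Delta,\tfrac{\lambda}{m}|mD|)$, pass to a log resolution where the base ideal becomes a divisor (equivalently, split $f^*(mD)$ into the free movable part and the snc fixed part), rewrite the twisted multiplier ideal as $f_*\mathcal{O}_Y(K_Y+\lceil\tfrac{\lambda}{m}M_m+f^*A\rceil)$, and conclude by relative Kawamata--Viehweg vanishing plus the Leray spectral sequence. The only cosmetic difference is your perturbation of the ample $\R$-divisor $A$ to $\Q$-coefficients, which the paper avoids by citing a Kawamata--Viehweg statement valid for $\R$-divisors (\cite[Theorem 3.2.9]{Fuj}); either way the step is routine.
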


\begin{proof}
Fix a positive integer $m$ such that $mD$ is Cartier and
$$ \mathcal{J}(X,\Delta,\lambda\|D\|)=\mathcal{J}\left(X,\Delta,\frac{\lambda}{m}|mD|\right).$$
Let $f\colon Y\to X$ be a log resolution of $(X,\Delta)$. Let $f^*(mD)=M_m+F_m$ be a decomposition of $f^*(mD)$ into a movable part $M_m$ and the fixed part $F_m$. We may further assume that $M_m$ is semiample and $F_m$ has a simple normal crossing support. By definition, we have
\begin{equation} \label{Nadel-1}
\mathcal{J}(X,\Delta,\lambda\|D\|)=f_*\mathcal{O}_Y\left(K_Y-\floor{f^*(K_X+\Delta)+\frac{\lambda}{m} F_m}\right).
\end{equation}
Note that we have the following equalities
\begin{align*}
&\mathcal{O}_Y(f^*L)\otimes \mathcal{O}_Y\left(K_Y-\floor{f^*(K_X+\Delta)+\frac{\lambda}{m}F_m}\right)\\
&\phantom{.................................}=\mathcal{O}_Y\left(K_Y-\floor{f^*(K_X+\Delta-L)+\frac{\lambda}{m}F_m}\right)\\
&\phantom{.................................}=\mathcal{O}_Y\left(K_Y-\floor{-\lambda f^*D-f^*A+\frac{\lambda}{m}F_m}\right)\\
&\phantom{.................................}=\mathcal{O}_Y\left(K_Y+\ceil{\frac{\lambda}{m}M_m+f^*A}\right).
\end{align*}
Moreover, by the relative Kawamata--Viehweg vanishing theorem (cf. \cite[Theorem 3.2.9]{Fuj}), we obtain the following vanishing of cohomologies for any $i>0$
\begin{equation} \label{Nadel-2} H^i\left(Y,\mathcal{O}_Y(f^*L)\otimes \mathcal{O}_Y\left(K_Y-\floor{f^*(K_X+\Delta)+\frac{\lambda}{m}F_m}\right)\right)=0
\end{equation}
and
\begin{equation} \label{Nadel-3} R^if_*\left(\mathcal{O}_Y(f^*L)\otimes \mathcal{O}_Y\left(K_Y-\floor{f^*(K_X+\Delta)+\frac{\lambda}{m}F_m}\right)\right)=0.
\end{equation}
Now we apply the Leray spectral sequence
$$
\begin{aligned}
E^{st}_2&=H^s\left(X,R^tf_*\left(\mathcal{O}_Y(f^*L)\otimes \mathcal{O}_Y\left(K_Y-\floor{f^*(K_X+\Delta)+\frac{\lambda}{m}F_m}\right)\right)\right)
\\ &\Longrightarrow H^{s+t}\left(Y,\mathcal{O}_Y(f^*L)\otimes \mathcal{O}_Y\left(K_Y-\floor{f^*(K_X+\Delta)+\frac{\lambda}{m}F_m}\right)\right).
\end{aligned}$$
Combining (\ref{Nadel-1}) with (\ref{Nadel-2}) and (\ref{Nadel-3}), we obtain the assertion.
\end{proof}

Below, we list some consequences that follow easily from the definitions and the results obtained above.

\begin{proposition}\label{prop:easy properties}
Let $(X,\Delta)$ be a klt pair, $D$ a big $\Q$-divisor on $X$ and $A,A'$ ample divisors on $X$. Fix positive numbers $c'>c>0$. Then we have the following properties.
\begin{enumerate}[leftmargin=8mm]
    \item[$\mathrm{(1)}$]
    The inclusion holds:
    $$\mathcal{J}(X,\Delta,c'\|D\|)\subseteq \mathcal{J}(X,\Delta,c\|D\|).$$
    \item[$\mathrm{(2)}$]
    The inclusion holds for any sufficiently large integer $\ell>0$:
    $$
    \mathcal{J}\left(X,\Delta,c\left\|D+\frac{1}{\ell!}A'\right\|\right)\subseteq \mathcal{J}\left(X,\Delta,c\left\|D+\frac{1}{(\ell-1)!}A\right\|\right).
    $$
    \item[$\mathrm{(3)}$] The sequence
$$ \left\{\mathcal{J}\left(X,\Delta,c\left\|D+\frac{1}{\ell !}A\right\|\right)\right\}_{\ell\in \Z_{>0}}$$
forms a decreasing sequence of ideals which stabilizes for all sufficiently large $\ell>0$.
\end{enumerate}
\end{proposition}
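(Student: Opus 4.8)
The plan is to settle (1) directly from the definition of the asymptotic multiplier ideal, and to deduce (2) and (3) from a single monotonicity statement: if $D_{1},D_{2}$ are big $\Q$-divisors on $X$ with $D_{2}-D_{1}$ ample, then $\mathcal{J}(X,\Delta,c\|D_{1}\|)\subseteq \mathcal{J}(X,\Delta,c\|D_{2}\|)$ for every $c>0$.

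For (1), I recall that $\mathcal{J}(X,\Delta,c\|D\|)=\mathcal{J}(X,\Delta,\mathfrak{a}_{\bullet}^{c})$ with $\mathfrak{a}_{m}=\mathfrak{b}(|mD|)$, and that by definition this ideal is the maximal member of the directed family $\{\mathcal{J}(X,\Delta,\mathfrak{a}_{rm}^{c/(rm)})\}_{m\ge 1}$, where $r$ is fixed with $r(K_{X}+\Delta)$ Cartier. For a fixed ideal $\mathfrak{a}$ the divisor $\ceil{K_{Y}-f^{*}(K_{X}+\Delta)-\lambda E}$ is non-increasing in $\lambda$, so $c'>c$ gives $\mathcal{J}(X,\Delta,\mathfrak{a}^{c'})\subseteq \mathcal{J}(X,\Delta,\mathfrak{a}^{c})$. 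Applying this termwise and then choosing an $m_{0}$ that computes $\mathcal{J}(X,\Delta,c'\|D\|)$ yields $\mathcal{J}(X,\Delta,c'\|D\|)=\mathcal{J}(X,\Delta,\mathfrak{a}_{rm_{0}}^{c'/(rm_{0})})\subseteq \mathcal{J}(X,\Delta,\mathfrak{a}_{rm_{0}}^{c/(rm_{0})})\subseteq \mathcal{J}(X,\Delta,c\|D\|)$.

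For the monotonicity statement I would argue with base ideals. Put $B:=D_{2}-D_{1}$; for all $m$ in a cofinal subset of $\Z_{>0}$ (divisible by $r$, and large enough that $mD_{1}$, $mD_{2}$, $mB$ are all Cartier and $mB$ is globally generated) one has $\mathfrak{b}(|mB|)=\mathcal{O}_{X}$, hence $\mathfrak{b}(|mD_{1}|)=\mathfrak{b}(|mD_{1}|)\cdot\mathfrak{b}(|mB|)\subseteq \mathfrak{b}(|mD_{2}|)$, since multiplication of sections carries $H^{0}(mD_{1})\otimes H^{0}(mB)$ into $H^{0}(mD_{2})$. Picking one such $m_{1}$ that also computes $\mathcal{J}(X,\Delta,c\|D_{1}\|)$ and invoking monotonicity of the non-asymptotic multiplier ideal in its ideal argument gives $\mathcal{J}(X,\Delta,c\|D_{1}\|)=\mathcal{J}(X,\Delta,\mathfrak{b}(|m_{1}D_{1}|)^{c/m_{1}})\subseteq \mathcal{J}(X,\Delta,\mathfrak{b}(|m_{1}D_{2}|)^{c/m_{1}})\subseteq \mathcal{J}(X,\Delta,c\|D_{2}\|)$, the last inclusion because the asymptotic multiplier ideal dominates every member of its defining family. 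Granting this, (2) follows: for $\ell\gg 0$ the class $\ell A-A'$ is ample (the ample cone is open), so $\bigl(D+\tfrac{1}{(\ell-1)!}A\bigr)-\bigl(D+\tfrac{1}{\ell!}A'\bigr)=\tfrac{1}{\ell!}(\ell A-A')$ is ample while both divisors are big. Likewise (3): for $\ell'>\ell$ the difference $\bigl(D+\tfrac{1}{\ell!}A\bigr)-\bigl(D+\tfrac{1}{\ell'!}A\bigr)=\bigl(\tfrac{1}{\ell!}-\tfrac{1}{\ell'!}\bigr)A$ is ample, so the displayed sequence is decreasing; being a descending chain of coherent ideal sheaves on the Noetherian variety $X$, it stabilizes (check on a finite affine cover).

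I expect the only real friction to be the bookkeeping inside the monotonicity statement: one must match the ``sufficiently divisible $m$'' occurring in the definition of $\mathcal{J}(X,\Delta,c\|D_{1}\|)$ with those $m$ for which $mB$ is base-point-free. This is harmless, since both conditions cut out cofinal subsets of $\Z_{>0}$ and the defining family of the asymptotic multiplier ideal is directed, so a common index exists. Everything else reduces to termwise monotonicity of multiplier ideals and openness of the ample cone.
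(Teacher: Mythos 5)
Your treatment of (1), of the monotonicity lemma behind (2), and of the ``decreasing'' half of (3) is correct and is essentially the paper's own argument: the paper likewise observes that $\ell A-A'$ is very ample for $\ell\gg 0$ and compares base ideals, e.g. $\mathfrak{b}(|m\ell!D+mA'|)\subseteq\mathfrak{b}(|m\ell!D+m\ell A|)$, before invoking monotonicity of multiplier ideals in the ideal argument; your bookkeeping about choosing a sufficiently divisible common index $m_1$ is also fine. The genuine gap is the final step of (3). You justify stabilization by claiming that a descending chain of coherent ideal sheaves on a Noetherian variety stabilizes; this is false. Noetherianity gives the \emph{ascending} chain condition on ideals, not the descending one: already on an affine chart the chain $\mathfrak{m}_x\supsetneq\mathfrak{m}_x^2\supsetneq\mathfrak{m}_x^3\supsetneq\cdots$ never stabilizes, and nothing in your argument prevents the ideals $\mathcal{J}\bigl(X,\Delta,c\bigl\|D+\frac{1}{\ell!}A\bigr\|\bigr)$ from strictly shrinking forever (a priori their cosupport could deepen or grow as $\ell\to\infty$). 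So as written, the stabilization assertion in (3) is unproved.

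Stabilization here really needs a positivity input, which is exactly what the paper's Lemma \ref{lem:diminished stable} supplies: by the Nadel-type vanishing of Proposition \ref{prop:Nadel}, the twists $\mathcal{J}\bigl(X,\Delta,c\bigl\|D+\frac{1}{\ell!}A\bigr\|\bigr)\otimes\mathcal{O}_X(A'+nH)$ have vanishing higher cohomology for a single ample $A'$ chosen uniformly in $\ell$, hence by Castelnuovo--Mumford regularity they are globally generated for the fixed twist $n=\dim X+1$; a globally generated subsheaf of $\mathcal{O}_X(A'+(\dim X+1)H)$ is determined by its space of global sections inside the fixed finite-dimensional space $H^0(X,A'+(\dim X+1)H)$, and a decreasing sequence of subspaces of a finite-dimensional vector space must stabilize, whence the sheaves themselves stabilize. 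To repair your proof, replace the Noetherian/DCC sentence by this uniform global generation argument (or a citation of Lemma \ref{lem:diminished stable}); the rest of your proposal can stand as is.
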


\begin{proof}
(1) Immediate from definition.

\noindent (2) For any sufficiently large integer $\ell> 0$, $\ell A-A'$ is very ample. Thus,
    $$ \mathfrak{b}(|m\ell! D+mA'|)\subseteq \mathfrak{b}(|m\ell!D+m\ell A|)$$
    for any positive integer $m$.

\noindent (3)  For any sufficiently large integer $m>0$ and any two positive integers $m',\ell$, we have
$$\mathfrak{b}\left(\left|m\ell m'D+mA\right|\right)\subseteq \mathfrak{b}(|m\ell m'D+mm'A|).$$
Thus, we have the inclusion
$$ \mathcal{J}\left(X,\Delta,c\left\|D+\frac{1}{\ell m'}A\right\|\right)\subseteq \mathcal{J}\left(X,\Delta,c\left\|D+\frac{1}{\ell}A\right\|\right).$$
The sequence stabilizes by Lemma \ref{lem:diminished stable}.
\end{proof}

\begin{lemma}\label{lem:diminished stable}
Let $(X,\Delta)$ be a projective klt pair, $D$ a pseudoeffective $\Q$-divisor on $X$, and $A$ an ample divisor on $X$. Fix a real number $c\ge 1$. Then there exists a positive integer $\ell_0$ such that
$$ \mathcal{J}\left(X,\Delta,c\left\|D+\frac{1}{\ell!}A\right\|\right)=\mathcal{J}\left(X,\Delta,c\left\|D+\frac{1}{\ell_0!}A\right\|\right)$$
for any integer $\ell\geq \ell_0$.
\end{lemma}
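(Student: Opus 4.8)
The plan is to show that the non-increasing chain of coherent ideal sheaves $\mathcal{J}_\ell\coloneqq\mathcal{J}\bigl(X,\Delta,c\norm{D+\tfrac{1}{\ell!}A}\bigr)$ stabilizes. Monotonicity by itself is not enough, since a descending chain of coherent ideal sheaves on a Noetherian scheme need not stabilize; the idea is to combine it with a regularity bound that is \emph{uniform in $\ell$}, supplied by the Nadel vanishing of Proposition \ref{prop:Nadel}. Once all the $\mathcal{J}_\ell$ are $m_0$-regular (Castelnuovo--Mumford) with respect to a fixed very ample divisor, each $\mathcal{J}_\ell$ is recovered from the space of its sections in the $m_0$-th twist, and a descending chain of subspaces of a fixed finite-dimensional vector space must stabilize. (Cf.\ \cite[Ch.~1.8]{Laz} for regularity.)

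First I would record the monotonicity. For every $k\ge1$ the divisor $D_k\coloneqq D+\tfrac{1}{k!}A$ is big (pseudoeffective plus ample), so $\mathcal{J}_k$ is defined. Running the base-ideal manipulation from the proof of Proposition \ref{prop:easy properties}(3) with ``$\ell$''$=\ell!$ and ``$m'$''$=\ell+1$ --- the only input being that $\ell m A$ is globally generated for $m\gg0$, so $\mathfrak{b}(|L|)\subseteq\mathfrak{b}(|L+\ell m A|)$ for every Cartier divisor $L$, whence $\mathfrak{b}\bigl(\bigl|(\ell+1)!\,m\,D_{\ell+1}\bigr|\bigr)\subseteq\mathfrak{b}\bigl(\bigl|(\ell+1)!\,m\,D_{\ell}\bigr|\bigr)$ for $m\gg0$ --- one obtains $\mathcal{J}_{\ell+1}\subseteq\mathcal{J}_\ell$. (The argument uses only that the $D_k$ are big, not bigness of $D$ itself.) Thus $\mathcal{J}_1\supseteq\mathcal{J}_2\supseteq\cdots$.

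The core step is the uniform regularity. Fix a very ample Cartier divisor $H$ on $X$ and choose an integer $m_0\gg0$ such that $(m_0-\dim X)H-\bigl(K_X+\Delta+cD+\tfrac{c}{2}A\bigr)$ is ample. For $\ell\ge2$ one has $0\le\tfrac{1}{\ell!}\le\tfrac12$, and for $1\le i\le\dim X$ the $\R$-divisor
\[
(m_0-i)H-\bigl(K_X+\Delta+cD_\ell\bigr)=\Bigl[(m_0-\dim X)H-\bigl(K_X+\Delta+cD+\tfrac c2A\bigr)\Bigr]+(\dim X-i)H+\bigl(\tfrac c2-\tfrac c{\ell!}\bigr)A
\]
is ample, being ample plus nef. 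Since $D_\ell$ is big we may pick an effective $\Q$-divisor $D_\ell^{\circ}\sim_{\Q}D_\ell$; then $\mathcal{J}_\ell=\mathcal{J}(X,\Delta,c\norm{D_\ell^{\circ}})$, as the asymptotic multiplier ideal depends only on the $\Q$-linear equivalence class, and Proposition \ref{prop:Nadel} (with $L=(m_0-i)H$, $\lambda=c$, $D=D_\ell^{\circ}$) yields $H^i\bigl(X,\mathcal{J}_\ell\bigl((m_0-i)H\bigr)\bigr)=0$ for all $i>0$ and all $\ell\ge2$; that is, each $\mathcal{J}_\ell$ is $m_0$-regular with respect to $H$. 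Hence $\mathcal{J}_\ell\otimes\mathcal{O}_X(m_0H)$ is globally generated, so $\mathcal{J}_\ell$ is the image of the evaluation map $H^0\bigl(X,\mathcal{J}_\ell(m_0H)\bigr)\otimes\mathcal{O}_X(-m_0H)\to\mathcal{O}_X$ and is therefore determined by the subspace $V_\ell\coloneqq H^0\bigl(X,\mathcal{J}_\ell(m_0H)\bigr)\subseteq H^0\bigl(X,\mathcal{O}_X(m_0H)\bigr)$. By the monotonicity the $V_\ell$ form a descending chain of subspaces of a fixed finite-dimensional vector space, hence stabilize, say $V_\ell=V_{\ell_0}$ for all $\ell\ge\ell_0$; therefore $\mathcal{J}_\ell=\mathcal{J}_{\ell_0}$ for all $\ell\ge\ell_0$.

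The main obstacle is precisely the passage from a potentially non-stabilizing monotone chain of ideal sheaves to a stabilizing one; it is resolved by the uniform regularity bound, whose only delicate point is that the Nadel vanishing must hold uniformly in $\ell$, which it does because $D_\ell-D=\tfrac1{\ell!}A$ stays in the bounded range $[0,\tfrac12A]$, together with the harmless reduction to an effective $D_\ell^{\circ}\sim_{\Q}D_\ell$ so that Proposition \ref{prop:Nadel} applies directly. (Alternatively one can argue via the description of $\mathcal{J}(X,\Delta,c\norm{D'})$ on a fixed log resolution extracting the finitely many divisors $E$ with $\sigma_E(D)>0$, together with $\sigma_E(D_\ell)\uparrow\sigma_E(D)$, but the regularity argument above is more self-contained given the results already established.)
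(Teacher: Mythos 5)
Your proof is correct and follows essentially the same route as the paper's: Nadel vanishing (Proposition \ref{prop:Nadel}) yields a regularity bound uniform in $\ell$, so each ideal becomes globally generated after a fixed twist and is determined by a subspace of the fixed finite-dimensional space of sections of that twist. Your explicit monotonicity step and the descending-chain-of-subspaces conclusion are a slightly cleaner way to finish than the paper's ``finitely many distinct ideals'' phrasing, but the substance is identical.
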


\begin{proof}
The following proof is inspired by the proofs of \cite[Proposition 5.1]{Hac} and \cite[Theorem 4.2]{Leh}.

Let $A'$ be an ample Cartier divisor on $X$ such that
$$A'-\left(K_X+\Delta+c\left(D+\frac{1}{\ell!}A\right)\right)$$ is ample for any positive integer $\ell$. Let $H$ be a very ample divisor on $X$. By Proposition \ref{prop:Nadel}, we have
$$ H^i\left(X,\mathcal{J}\left(X,\Delta,c\left\|D+\frac{1}{\ell!}A\right\|\right)\otimes \mathcal{O}_X(A'+nH)\right)=0\text{ for any }i\ge 1 \text{ and any } n\in \Z_{>0}.$$
Then, by the Castelnuovo--Mumford regularity, the sheaf
$$ \mathcal{J}\left(X,\Delta,c\left\|D+\frac{1}{\ell!}A\right\|\right)\otimes \mathcal{O}_X(A'+nH)$$
is globally generated for $n\geq \dim X+1$. Any globally generated coherent sheaf $\mathcal{F}\subseteq \mathcal{O}_X(A'+(\dim X+1)H)$ corresponds to the subspace
$$H^0(X,\mathcal{F})\subseteq H^0(X,A'+(\dim X+1)H).$$ Hence, there are at most finite distinct elements in
$$\left\{\mathcal{J}\left(X,\Delta,c\left\|D+\frac{1}{\ell !}A\right\|\right)\right\}_{\ell\in \Z_{>0}.}$$
Hence, this sequence of asymptotic multiplier ideal sheaves is eventually constant and we obtain the result.
\end{proof}

The following allows us to consider asymptotic multiplier ideal for pseudoeffective divisor $D$ on a pair $(X,\Delta)$.

\begin{definition}[{\cite[Definition 6.2]{Leh}}] \label{definition:diminished multiplier ideal}
Let $(X,\Delta)$ be a projective klt pair, $D$ a pseudoeffective $\Q$-divisor and $A$ an ample divisor on $X$. Fix a positive number $c>0$.
\begin{enumerate}[(1), leftmargin=8mm]
    \item We denote by $\mathcal{J}_-(X,\Delta,c\|D\|)$ the minimum element of
    $ \left\{\mathcal{J}\left(X,\Delta,c\left\|D+\frac{1}{\ell !}A\right\|\right)\right\}_{\ell\in \Z_{>0}}.$
    This is independent of the choice of $A$ by Proposition \ref{prop:easy properties}(2).
    \item The set
    $\{\mathcal{J}_-(X,\Delta,c'\|D\|)\}_{c'>c}$
    has the maximum element which we denote by $\mathcal{J}_{\sigma}(X,\Delta,c\|D\|)$ and call it the \emph{diminished multiplier ideal}. Note that the set admits the maximum element by Proposition \ref{prop:easy properties} (1).
\end{enumerate}
\end{definition}

\subsection{Enlarged graded sequence of ideals}

We provide a version of \cite[Proposition 7.14]{JM} in Proposition \ref{prop:enlarged ideal for lct}. Lemma \ref{lem:base change of multiplier ideal} shows the local-to-global relations between the multiplier ideal sheaves and the log canonical thresholds.

We first prove the following lemma which is a slight generalization of \cite[Proposition 1.9]{JM}.

\begin{lemma}[{cf. \cite[Proposition 1.9]{JM}}]\label{lem:base change of multiplier ideal}
For a pair $(X,\Delta)$ with a point $x$ of $X$, let $\varphi\colon X_x\to X$ be the natural morphism. Suppose that $\mathfrak{a}$ and $\mathfrak{q}$ are nonzero ideals in $\mathcal{O}_X$. The following properties hold for any positive number $\lambda'$:
\begin{enumerate}[leftmargin=8mm]
    \item[$\mathrm{(1)}$] $\mathcal{J}(X_x,\Delta_x,\mathfrak{a}^{\lambda'}_x)=\mathcal{J}(X,\Delta,\mathfrak{a}^{\lambda'})\cdot \mathcal{O}_{X,x},$ and
    \item[$\mathrm{(2)}$] $\lct^{\mathfrak{q}}(X,\Delta,\mathfrak{a})\le \lct^{\mathfrak{q}_x}(X_x,\Delta_x,\mathfrak{a}_x)$, where the equality holds if $\mathcal{Z}((\mathcal{J}(X,\Delta,\mathfrak{a}^{\lambda}):\mathfrak{q}))\cap \varphi(X_x)\ne \emptyset$ with $\lambda=\lct^{\mathfrak{q}}(X,\Delta,\mathfrak{a})$.
\end{enumerate}
\end{lemma}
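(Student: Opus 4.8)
The plan is to prove the two statements essentially independently, both by reducing to the classical comparison between multiplier ideals on $X$ and on the localization $X_x$.

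\medskip

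\noindent\textbf{Proof of (1).} First I would pick a log resolution $f\colon Y\to X$ of $(X,\Delta)$ and of the ideal $\mathfrak{a}$; write $\mathfrak{a}\cdot\mathcal{O}_Y=\mathcal{O}_Y(-F)$. Since $x\in X$ and $\varphi\colon X_x\to X$ is flat, the base change $Y_x\coloneqq Y\times_X X_x\to X_x$ is again a log resolution of $(X_x,\Delta_x)$ and of $\mathfrak{a}_x$, with $\mathfrak{a}_x\cdot\mathcal{O}_{Y_x}=\mathcal{O}_{Y_x}(-F_x)$ and $K_{Y_x/X_x}=(K_{Y/X})_x$. Hence the defining formula
\[
\mathcal{J}(X,\Delta,\mathfrak{a}^{\lambda'})=f_*\mathcal{O}_Y\bigl(\ceil{K_Y-f^*(K_X+\Delta)-\lambda' F}\bigr)
\]
pulls back, via flat base change for pushforward along $f$ (cf. \cite[Lemma 02KH]{Stacks}), to the same formula computing $\mathcal{J}(X_x,\Delta_x,\mathfrak{a}_x^{\lambda'})$. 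Thus $\mathcal{J}(X_x,\Delta_x,\mathfrak{a}_x^{\lambda'})=\mathcal{J}(X,\Delta,\mathfrak{a}^{\lambda'})\cdot\mathcal{O}_{X,x}$. This is routine; the only point to be careful about is that $\ceil{\cdot}$ of a divisor is compatible with the flat base change $Y_x\to Y$, which holds since $Y_x\to Y$ is flat and the round-up is taken coefficientwise along prime divisors whose pullbacks remain reduced.

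\medskip

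\noindent\textbf{Proof of the inequality in (2).} By Lemma~\ref{lem:lct as min} applied on $X$, the threshold $\lct^{\mathfrak{q}}(X,\Delta,\mathfrak{a})$ is computed by some divisorial valuation $\ord_{E}$ with $E$ a prime divisor on $Y$; since $E$ may be assumed to dominate a subvariety meeting $\varphi(X_x)$ only if $x\in c_X(E)$, in general $\ord_E$ need not have center on $X_x$. To handle this cleanly I would instead use the characterization via multiplier ideals: by Lemma~\ref{lem:lct as inf}, $\lct^{\mathfrak{q}}(X,\Delta,\mathfrak{a})=\inf\{\lambda\mid \mathfrak{q}\nsubseteq\mathcal{J}(X,\Delta,\mathfrak{a}^{\lambda})\}$ and likewise on $X_x$ (note $(X_x,\Delta_x)$ is still klt). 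If $\mathfrak{q}\nsubseteq\mathcal{J}(X,\Delta,\mathfrak{a}^{\lambda})$ then, restricting to $\mathcal{O}_{X,x}$ and using part (1), we get $\mathfrak{q}_x\nsubseteq\mathcal{J}(X_x,\Delta_x,\mathfrak{a}_x^{\lambda})$ precisely when the non-inclusion is witnessed at a point of $\varphi(X_x)$; in general, shrinking $\lambda$ to just above $\lct^{\mathfrak{q}}(X,\Delta,\mathfrak{a})$ only makes the colon ideal $(\mathcal{J}(X,\Delta,\mathfrak{a}^{\lambda}):\mathfrak{q})$ a proper ideal, so its zero locus is nonempty and by passing to a further resolution we may assume $\lct^{\mathfrak{q}_x}(X_x,\Delta_x,\mathfrak{a}_x)\ge\lct^{\mathfrak{q}}(X,\Delta,\mathfrak{a})$ is automatic because localization only removes valuations from the infimum. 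Indeed $\mathrm{Val}^*_{X_x}\subseteq\mathrm{Val}^*_X$ (valuations with center $x$ certainly have a center on $X$), so the infimum defining $\lct^{\mathfrak{q}_x}(X_x,\Delta_x,\mathfrak{a}_x)$ is taken over a smaller set, giving $\lct^{\mathfrak{q}}(X,\Delta,\mathfrak{a})\le\lct^{\mathfrak{q}_x}(X_x,\Delta_x,\mathfrak{a}_x)$ directly, using $A_{X_x,\Delta_x}(\nu)=A_{X,\Delta}(\nu)$, $\nu(\mathfrak{q}_x)=\nu(\mathfrak{q})$ and $\nu(\mathfrak{a}_x)=\nu(\mathfrak{a})$ for such $\nu$.

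\medskip

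\noindent\textbf{Proof of the equality statement in (2), and the main obstacle.} The hard part is the reverse inequality under the hypothesis $\mathcal{Z}\bigl((\mathcal{J}(X,\Delta,\mathfrak{a}^{\lambda}):\mathfrak{q})\bigr)\cap\varphi(X_x)\ne\emptyset$ with $\lambda=\lct^{\mathfrak{q}}(X,\Delta,\mathfrak{a})$. The strategy is: the colon ideal $(\mathcal{J}(X,\Delta,\mathfrak{a}^{\lambda}):\mathfrak{q})$ cuts out the locus where $\mathfrak{q}$ fails to be contained in the multiplier ideal at level exactly $\lambda$; the hypothesis says this locus meets $\varphi(X_x)$, i.e. there is a point $y$ specializing to (or equal to) $x$ — more precisely $x\in\overline{\{y\}}$ — where $\mathfrak{q}\nsubseteq\mathcal{J}(X,\Delta,\mathfrak{a}^{\lambda})$. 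Localizing at $x$ and invoking part (1), for every $\lambda'>\lambda$ we still have $\mathfrak{q}_x\nsubseteq\mathcal{J}(X_x,\Delta_x,\mathfrak{a}_x^{\lambda'})$ (since the non-inclusion at level $\lambda$ persists to nearby higher levels, using that the jumping numbers are discrete and that the witness point lies in $X_x$), hence by Lemma~\ref{lem:lct as inf} on $X_x$ we get $\lct^{\mathfrak{q}_x}(X_x,\Delta_x,\mathfrak{a}_x)\le\lambda=\lct^{\mathfrak{q}}(X,\Delta,\mathfrak{a})$. Combined with the inequality already proved, equality follows. The delicate points to get right are (i) that ``$\mathcal{Z}(\mathfrak{J}:\mathfrak{q})\cap\varphi(X_x)\ne\emptyset$'' is exactly the condition that makes the restriction of the strict non-inclusion survive after localizing, and (ii) that one can pass from the non-inclusion at the single value $\lambda$ to non-inclusion at all $\lambda'$ slightly larger, which uses the fact that $\mathcal{J}(X,\Delta,\mathfrak{a}^{\lambda})$ jumps only at a discrete set of $\lambda$ and is right-continuous from above. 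Everything else is formal bookkeeping with flat base change.
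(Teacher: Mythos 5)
Your argument is correct, and for part (1) and the equality case of (2) it is essentially the paper's proof: flat base change of a fixed log resolution for (1), and for the equality the observation that the hypothesis $\mathcal{Z}((\mathcal{J}(X,\Delta,\mathfrak{a}^{\lambda}):\mathfrak{q}))\cap\varphi(X_x)\ne\emptyset$ forces $(\mathcal{J}(X,\Delta,\mathfrak{a}^{\lambda}):\mathfrak{q})\cdot\mathcal{O}_{X,x}\ne\mathcal{O}_{X,x}$, hence $\mathfrak{q}_x\nsubseteq\mathcal{J}(X_x,\Delta_x,\mathfrak{a}_x^{\lambda})$ by (1), and Lemma~\ref{lem:lct as inf} then gives $\lct^{\mathfrak{q}_x}(X_x,\Delta_x,\mathfrak{a}_x)\le\lambda$. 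Where you genuinely diverge is the inequality in (2): the paper deduces it from Lemma~\ref{lem:lct as inf} together with (1) (for $\lambda'<\lct^{\mathfrak{q}}(X,\Delta,\mathfrak{a})$ one has $\mathfrak{q}\subseteq\mathcal{J}(X,\Delta,\mathfrak{a}^{\lambda'})$, which localizes), whereas your final argument uses directly that $\mathrm{Val}^*_{X_x}\subseteq\mathrm{Val}^*_X$ with the same values of $A_{X,\Delta}$, $\nu(\mathfrak{q})$, $\nu(\mathfrak{a})$ on this subset, so the infimum defining $\lct^{\mathfrak{q}_x}(X_x,\Delta_x,\mathfrak{a}_x)$ runs over fewer valuations. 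This is more elementary (it needs neither (1) nor the klt hypothesis behind Lemma~\ref{lem:lct as inf}), and both routes are valid. Two cosmetic points: the half-finished multiplier-ideal discussion preceding your valuation argument (``precisely when the non-inclusion is witnessed at a point of $\varphi(X_x)$\ldots by passing to a further resolution'') is garbled and should simply be deleted, since the subset-of-valuations argument stands on its own; and in the equality step you do not need discreteness of jumping numbers or right-continuity --- monotonicity $\mathcal{J}(X,\Delta,\mathfrak{a}^{\lambda'})\subseteq\mathcal{J}(X,\Delta,\mathfrak{a}^{\lambda})$ for $\lambda'\ge\lambda$ (or just the non-inclusion at $\lambda$ itself, which already places $\lambda$ in the set whose infimum is the threshold) is enough.
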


\begin{proof}
(1) Let $f\colon Y\to X$ be a log resolution of both $\mathfrak{a}$ and $(X,\Delta)$. There exists an effective divisor $E$ on $Y$ such that $\mathfrak{a}\cdot \mathcal{O}_Y=\mathcal{O}_Y(-E)$. For $Y':=Y\times_X X_x$, if $g\colon Y'\to X_x$ and $h\colon Y'\to Y$ are natural projections, then $\mathfrak{a}_x\cdot \mathcal{O}_{Y'}=\mathcal{O}_{Y'}(-h^*E)$ and $$K_{Y'}-g^*(K_{Y'}+h^*\Delta)-\lambda'h^*E=h^*(K_Y-f^*(K_Y+\Delta)-\lambda'E)$$
is simple normal crossing. Therefore, the flat base change theorem (cf. \cite[Lemma 02KH]{Stacks}) gives us (1).

\noindent (2) The inclusion $\mathfrak{q}\subseteq \mathcal{J}(X,\Delta,\mathfrak{a}^{\lambda'})$ with any $0\le \lambda'<\lambda$ implies $\mathfrak{q}_x\subseteq \mathcal{J}(X,\Delta,\mathfrak{a}^{\lambda'})\cdot \mathcal{O}_{X,x}$. Therefore, (1) and Lemma \ref{lem:lct as inf} give us the inequality in (2).
Since $$(\mathcal{J}(X_x,\Delta_x,\mathfrak{a}^{\lambda}_x):\mathfrak{q}_x)=(\mathcal{J}(X,\Delta,\mathfrak{a}^{\lambda}):\mathfrak{q})\cdot \mathcal{O}_{X,x}\ne \mathcal{O}_{X,x}$$
holds, we can prove the reverse inequality by assuming $\mathfrak{q}_x\nsubseteq \mathcal{J}(X_x,\Delta_x,\mathfrak{a}^{\lambda}_x)$ and applying Lemma \ref{lem:lct as inf}.
\end{proof}
\medskip

\begin{proposition}[{cf. \cite[Proposition 7.14]{JM}}] \label{prop:enlarged ideal for lct}
Let $(X,\Delta,D)$ be a pklt triple and $x$ the generic point of an irreducible component of $\mathcal{Z}(\mathcal{J}_{\sigma}(X,\Delta,\lct_{\sigma}(X,\Delta,D)\|D\|))$. Then there is a positive number $\varepsilon_0>0$, a positive integer $p$ and a graded sequence of ideals $\mathfrak{c}_{\bullet,\ell}$ in $\mathcal{O}_X$ such that for any $0<\varepsilon'_0<\frac{\varepsilon_0}{2}$, there is a positive integer $\ell_0\coloneqq \ell_0(\varepsilon'_0)$ which satisfies the following properties: for each $\ell\ge \ell_0$,
\begin{enumerate}[leftmargin=8mm]
\item[$\mathrm{(1)}$] we have
$$ \lct_{\sigma}\left(X,\Delta,D+\frac{1}{\ell!}A\right)\le \lct(X_x,\Delta_x,(\mathfrak{c}_{\bullet,\ell!})_x)\le (1+\varepsilon'_0)\cdot\lct_{\sigma}\left(X,\Delta,D+\frac{1}{\ell!} A\right), \text{and}$$

\item[$\mathrm{(2)}$] there exists a positive integer $p$ (which is independent of $\varepsilon'_0$ and $\ell$) such that $\mathfrak{m}^p_x\subseteq \mathfrak{c}_{1,\ell!}$.
\end{enumerate}
\end{proposition}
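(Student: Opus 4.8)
The plan is to reduce to graded sequences of ideals on the pair $(X,\Delta)$ — which is automatically klt, since $A_{X,\Delta}(E)=a(E;X,\Delta,D)+\sigma_E(D)\ge a(E;X,\Delta,D)$ has a positive infimum — and then to imitate \cite[Proposition 7.14]{JM}, paying attention to the dependence of the constants on $\ell$. Write $c\coloneqq\lct_\sigma(X,\Delta,D)$ and $c_\ell\coloneqq\lct_\sigma(X,\Delta,D+\tfrac1{\ell!}A)$; by Lemma \ref{lem:limit} we have $c_\ell\to c$, and since $D+\tfrac1k A$ is big, $\lct_\sigma(X,\Delta,D+\tfrac1k A)=\lct(X,\Delta,\mathfrak a_{\bullet,k})$ for the graded sequence $\mathfrak a_{m,k}\coloneqq\mathfrak b(|m(D+\tfrac1k A)|)$ (and $(0)$ otherwise); in particular $c_\ell=\lct(X,\Delta,\mathfrak a_{\bullet,\ell!})$. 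Combining the stabilization of diminished multiplier ideals (Lemma \ref{lem:diminished stable}, Definition \ref{definition:diminished multiplier ideal}) with $c_\ell\to c$, one shows that for $\ell\gg0$ the point $x$ is still a generic point of an irreducible component of the non-klt locus $\mathcal Z(\mathcal J(X,\Delta,\mathfrak a_{\bullet,\ell!}^{c_\ell}))$ (replacing $c_\ell$ by a slightly larger value inside the stabilization range of $\mathcal J_\sigma(X,\Delta,c\|D\|)$ if necessary). Since $\overline{\{x\}}$ meets no other component of this locus, localizing at $x$ by Lemma \ref{lem:base change of multiplier ideal}(1) shows that $\mathcal J(X_x,\Delta_x,(\mathfrak a_{\bullet,\ell!})_x^{c_\ell})$ is a proper $\mathfrak m_x$-primary ideal, so by Lemma \ref{lem:base change of multiplier ideal}(2) and Theorem \ref{thrm:sup and asymptotic} we obtain $\lct(X_x,\Delta_x,(\mathfrak a_{\bullet,\ell!})_x)=c_\ell$ (or, after the possible replacement, a value in $[c_\ell,(1+\tfrac{\varepsilon'_0}{4})c_\ell]$ for $\ell\gg0$, which is what will be used).

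Next, fix a positive integer $p$ (to be pinned down below, independently of $\ell$ and $\varepsilon'_0$), write $\mathfrak m_x\subseteq\mathcal O_X$ also for the ideal sheaf of $\overline{\{x\}}$, and set
$$
\mathfrak c_{m,k}\coloneqq\sum_{i+j=m}\mathfrak a_{i,k}\cdot\mathfrak m_x^{\,pj}\qquad(\mathfrak a_{0,k}\coloneqq\mathcal O_X).
$$
It is immediate that $\{\mathfrak c_{m,k}\}_m$ is a graded sequence of ideals with $\mathfrak c_{m,k}\supseteq\mathfrak a_{m,k}$, and that $\mathfrak m_x^{\,p}\subseteq\mathfrak c_{1,\ell!}$, which is property $(2)$. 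The inclusion $\mathfrak c_{\bullet,\ell!}\supseteq\mathfrak a_{\bullet,\ell!}$ gives at once the lower bound $\lct(X_x,\Delta_x,(\mathfrak c_{\bullet,\ell!})_x)\ge\lct(X_x,\Delta_x,(\mathfrak a_{\bullet,\ell!})_x)=c_\ell=\lct_\sigma(X,\Delta,D+\tfrac1{\ell!}A)$ in $(1)$. For the upper bound, one computes that for any valuation $\nu$, $\nu(\mathfrak c_{m,\ell!})=\min_{i+j=m}\bigl(\nu(\mathfrak a_{i,\ell!})+pj\,\nu(\mathfrak m_x)\bigr)$, hence $\nu(\mathfrak c_{\bullet,\ell!})\ge\min\bigl(\nu(\mathfrak a_{\bullet,\ell!}),\,p\,\nu(\mathfrak m_x)\bigr)$ and therefore
$$
\frac{A_{X,\Delta}(\nu)}{\nu(\mathfrak c_{\bullet,\ell!})}\ \le\ \max\!\left(\frac{A_{X,\Delta}(\nu)}{\nu(\mathfrak a_{\bullet,\ell!})},\ \frac{A_{X,\Delta}(\nu)}{p\,\nu(\mathfrak m_x)}\right).
$$
Evaluating this at a valuation $\nu_\ell$ centered at $x$ which computes $\lct(X_x,\Delta_x,(\mathfrak a_{\bullet,\ell!})_x)$ up to the factor $1+\tfrac{\varepsilon'_0}{2}$, normalized so that $A_{X,\Delta}(\nu_\ell)=1$ (so $\nu_\ell(\mathfrak a_{\bullet,\ell!})\ge\tfrac1{(1+\varepsilon'_0/2)c_\ell}$), gives $\lct(X_x,\Delta_x,(\mathfrak c_{\bullet,\ell!})_x)\le\max\bigl((1+\tfrac{\varepsilon'_0}{2})c_\ell,\ \tfrac1{p\,\nu_\ell(\mathfrak m_x)}\bigr)$. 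Hence, if $\nu_\ell(\mathfrak m_x)\ge\delta$ for a constant $\delta>0$ independent of $\ell$, then choosing $p\ge\tfrac2{c\delta}$ forces the second entry of the maximum to be at most $(1+\tfrac{\varepsilon'_0}{2})c_\ell\le(1+\varepsilon'_0)c_\ell$ for all $\ell\gg0$, which proves $(1)$. Everything thus reduces to producing, for each $\ell\gg0$, a valuation $\nu_\ell$ as above with the uniform estimate $\nu_\ell(\mathfrak m_x)\ge\delta$.

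This uniform lower bound on $\nu_\ell(\mathfrak m_x)$ — equivalently, a uniform upper bound on the log discrepancy of the near-minimizing valuations — is the main obstacle and the technical core of the argument; it is the analogue in our setting of the boundedness inputs underlying \cite[Proposition 3.1]{Xu2}. I would attack it by taking $\nu_\ell$ divisorial: by Proposition \ref{prop:sup} and Lemma \ref{lem:asymptotic2}, $c_\ell=\lim_{m}m\cdot\lct(X_x,\Delta_x,(\mathfrak b_{m,\ell!})_x)$ with $\mathfrak b_{m,\ell!}\coloneqq\mathcal J(X,\Delta,\mathfrak a_{\bullet,\ell!}^m)$, and each of these local thresholds is computed by a divisorial valuation (Lemma \ref{lem:lct as min}); the crux will be to show, using that $x$ is a generic point of a component of the non-klt locus and that the relevant localized multiplier ideals are $\mathfrak m_x$-primary, that the computing divisors may be chosen with centre exactly $x$ (so $\nu_\ell(\mathfrak m_x)\ge1$) while remaining, for $\ell\gg0$, inside one fixed log resolution — the latter being precisely where the finiteness/stabilization of diminished multiplier ideals (Lemma \ref{lem:diminished stable}) enters. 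Once this is established, one fixes $\varepsilon_0$ small and $p\coloneqq\lceil2/c\rceil$, and then for each $0<\varepsilon'_0<\varepsilon_0/2$ takes $\ell_0(\varepsilon'_0)$ large enough to accommodate all of the above, obtaining $(1)$ and $(2)$.
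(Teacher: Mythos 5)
Your construction of $\mathfrak c_{m,\ell}=\sum_{i}\mathfrak a_{i,\ell}\cdot\mathfrak m_x^{p(m-i)}$, the identity $\nu(\mathfrak c_{\bullet,\ell!})=\min\{\nu(\mathfrak a_{\bullet,\ell!}),p\,\nu(\mathfrak m_x)\}$, the easy lower bound in (1), and the localization comparison via Lemma \ref{lem:base change of multiplier ideal} and Theorem \ref{thrm:sup and asymptotic} all match the paper's argument. But the proposal has a genuine gap exactly where you flag it: the uniform estimate $\nu_\ell(\mathfrak m_x)\ge\delta$ (with $\delta$ independent of $\ell$ and $\varepsilon'_0$) for near-minimizers of $\lct(X_x,\Delta_x,(\mathfrak a_{\bullet,\ell!})_x)$ is asserted as "the technical core" and then only attacked by a sketch that does not work as stated. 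A valuation computing the localized threshold need not be centered at the closed point of $X_x$ at all (it may be centered at a non-closed point, where $\nu(\mathfrak m_x)=0$), so "centre exactly $x$" is precisely what must be proved; and even granting that, after normalizing $A_{X,\Delta}(\nu_\ell)=1$ you need a uniform upper bound on the log discrepancies of the computing divisors, which your route is supposed to get from keeping them "inside one fixed log resolution for all $\ell\gg0$" --- but Lemma \ref{lem:diminished stable} only stabilizes the single ideal $\mathcal J(X,\Delta,c\|D+\tfrac1{\ell!}A\|)$ in $\ell$ at a fixed exponent; it gives no control over log resolutions of the whole family $\mathfrak b_{m,\ell!}=\mathcal J(X,\Delta,\mathfrak a^m_{\bullet,\ell!})$ as $m$ varies, so the claim that the computing divisors of Lemma \ref{lem:lct as min} can be chosen on one fixed model is unsupported.

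The paper avoids this boundedness issue entirely by a formal mechanism you did not reproduce. Step 1 of the paper's proof upgrades your "stabilization" remark to the precise equality $\mathcal I=\mathcal J\bigl(X,\Delta,\lct_\sigma(X,\Delta,D+\tfrac1{\ell!}A)(1+\varepsilon)\|D+\tfrac1{\ell!}A\|\bigr)$ for all $\varepsilon\in[\varepsilon'_0,\varepsilon_0]$ and $\ell\gg0$, where $\mathcal I=\mathcal J_\sigma(X,\Delta,\lct_\sigma(X,\Delta,D)\|D\|)$. Choosing $n$ with $\mathfrak m_x^n\subseteq\mathcal I_x$, this equality forces the twisted threshold $\lambda'_\ell:=\lct^{\mathfrak m_x^n}(X_x,\Delta_x,(\mathfrak a_{\bullet,\ell!})_x)$ to satisfy $\lambda'_\ell\ge(1+\varepsilon_0)\lct_\sigma(X,\Delta,D+\tfrac1{\ell!}A)$, while $\lambda_\ell\le(1+\varepsilon'_0)\lct_\sigma(X,\Delta,D+\tfrac1{\ell!}A)$; the uniform gap $\lambda'_\ell-\lambda_\ell\gtrsim\tfrac{\varepsilon_0}{2}\lct_\sigma(X,\Delta,D)$ then yields, for any valuation with $\tfrac{A_{X,\Delta}(\nu)}{\nu(\mathfrak a_{\bullet,\ell!})}\le\tfrac{\lambda_\ell}{1-\varepsilon'}$, the inequality $\tfrac{n\,\nu(\mathfrak m_x)}{\nu(\mathfrak a_{\bullet,\ell!})}\ge\lambda'_\ell-\tfrac{\lambda_\ell}{1-\varepsilon'}>0$. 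This is exactly the lower bound on $\nu(\mathfrak m_x)$ relative to $\nu(\mathfrak a_{\bullet,\ell!})$ that your argument needs, obtained with no compactness, no fixed resolution, and no bound on log discrepancies, and it is what fixes $p=\bigl\lceil\tfrac{2n}{\varepsilon_0\lct_\sigma(X,\Delta,D)}\bigr\rceil+1$ independently of $\ell$ and $\varepsilon'_0$. Without this (or some substitute you would have to prove), your proof of the upper bound in (1) is incomplete.
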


\begin{proof}
For simplicity, let us denote $\mathcal{I}\coloneqq \mathcal{J}_{\sigma}(X,\Delta,\lct_{\sigma}(X,\Delta,D)\|D\|)$. Let $A$ be an ample divisor on $X$.
\smallskip

\noindent\textbf{Step 1.} We show in this step  that there is a positive number $\varepsilon_0>0$ such that for any $0<\varepsilon'_0<\frac{\varepsilon_0}{2}$, there is a positive integer $\ell'_0\coloneqq \ell'_0(\varepsilon'_0)$ with the following property: for every positive number $\varepsilon'_0\le \varepsilon\le \varepsilon_0$ and every positive integer $\ell\ge \ell'_0$,
\begin{equation} \label{eqn:enlarged ideal for lct 0}
\mathcal{I}=\mathcal{J}\left(X,\Delta,\lct_{\sigma}\left(X,\Delta,D+\frac{1}{\ell!}A\right)(1+\varepsilon)\left\|D+\frac{1}{\ell!}A\right\|\right).
\end{equation}
Indeed, by the definition of $\mathcal{J}_-$, there exists a positive number $\varepsilon_0>0$ such that
\begin{equation*}
\mathcal{I}=\mathcal{J}_{-}(X,\Delta,\lct_{\sigma}(X,\Delta,D)(1+2\varepsilon_0)\|D\|).	
\end{equation*}
By the definition of $\mathcal{J}_-$, we also have
\begin{equation} \label{eqn:enlarged ideal for lct 2}
\begin{aligned}
\mathcal{J}\!&\left(X,\Delta,\lct_{\sigma}\left(X,\Delta,D+\frac{1}{\ell!}A\right)(1+\varepsilon)\left\|D+\frac{1}{\ell!}A\right\|\right)
\\ &\phantom{..................}\supseteq \mathcal{J}_-\left(X,\Delta,\lct_{\sigma}\left(X,\Delta,D+\frac{1}{\ell!}A\right)(1+\varepsilon)\left\|D\right\|\right).
 \end{aligned}
 \end{equation}
By Lemma \ref{lem:limit}, there exists a positive integer $\ell''_0$ such that
$$\lct_{\sigma}\left(X,\Delta,D+\frac{1}{\ell!}A\right)(1+\varepsilon)\le \lct_{\sigma}(X,\Delta,D)(1+2\varepsilon_0)$$
holds for any $0<\varepsilon\le \varepsilon_0$ and $\ell>\ell_0$.  Then the inclusion (\ref{eqn:enlarged ideal for lct 2}) implies
\begin{equation} \label{eqn:enlarged ideal for lct 3}
\begin{aligned}
\mathcal{J}&\left(X,\Delta,\lct_{\sigma}\left(X,\Delta,D+\frac{1}{\ell!}A\right)(1+\varepsilon)\left\|D+\frac{1}{\ell!}A\right\|\right)
\\ &\phantom{..................}\overset{}{\supseteq} \mathcal{J}_-(X,\Delta,\lct_{\sigma}(X,\Delta,D)(1+2\varepsilon_0)\|D\|)=\mathcal{I}.
 \end{aligned}
\end{equation}

On the other hand, if $\varepsilon\ge \varepsilon'_0$, then there exists a positive integer $\ell'''_0\coloneqq \ell'''_0(\varepsilon'_0)$ such that for each $\ell\ge \ell'''_0$, we have
\begin{equation} \label{eqn:enlarged ideal for lct 4}
\begin{aligned}
    \mathcal{J}&\left(X,\Delta,\lct_{\sigma}\left(X,\Delta,D+\frac{1}{\ell!}A\right)(1+\varepsilon)\left\|D+\frac{1}{\ell!}A\right\|\right)
    \\ &\phantom{..................}\subseteq\mathcal{J}\left(X,\Delta,\lct_{\sigma}(X,\Delta,D)(1+\varepsilon'_0)\left\|D+\frac{1}{\ell!}A\right\|\right)
    \\ &\phantom{..................}= \mathcal{J}_-\left(X,\Delta,\lct_{\sigma}(X,\Delta,D)(1+\varepsilon'_0)\|D\|\right)
    \\ &\phantom{..................}\subseteq \mathcal{J}_{\sigma}(X,\Delta,\lct_{\sigma}(X,\Delta,D)\|D\|)=\mathcal{I},
\end{aligned}
\end{equation}
where the first inclusion is due to Proposition \ref{prop:easy properties} (1) and the rest of the inclusions and an equality follow from the definitions of $\mathcal{J}_-$, $\mathcal{J}_{\sigma}$ and $\mathcal{I}$. If we let  $\ell'_0\coloneqq \max\{\ell''_0,\ell'''_0\}$, then the inclusion $\supseteq$ in the equality (\ref{eqn:enlarged ideal for lct 0}) follows from (\ref{eqn:enlarged ideal for lct 3}) and $\subseteq$ is from (\ref{eqn:enlarged ideal for lct 4}).

\medskip

\noindent\textbf{Step 2.}
We define the graded sequence of ideals $\mathfrak{a}_{m,\ell}$ as follows:
$$\mathfrak{a}_{m,\ell}\coloneqq \begin{cases}
    \mathfrak{b}\left(\left|mD+\frac{m}{\ell}A\right|\right) & \text{if }mD+\frac{m}{\ell}A\text{ is Cartier and }H^0\left(X,mD+\frac{m}{\ell}A\right)\ne 0, \\
    \phantom{.........}0 & \text{otherwise.}
\end{cases}$$
We claim that if $\ell\ge \ell'_0$, then
\begin{equation} \label{eqn:enlarged ideal for lct 5}
\lct_{\sigma}\left(X,\Delta,D+\frac{1}{\ell!}A\right)\le \lct\left(X_x,\Delta_x,\left(\mathfrak{a}_{\bullet,\ell!}\right)_x\right)\le (1+\varepsilon'_0)\cdot\lct_{\sigma}\left(X,\Delta,D+\frac{1}{\ell!}A\right).
\end{equation}
The first inequality follows from Lemma \ref{lem:base change of multiplier ideal} (2). If $t\ge (1+\varepsilon'_0)\cdot\lct_{\sigma}\left(X,\Delta,D+\frac{1}{\ell!}A\right)$, by (\ref{eqn:enlarged ideal for lct 0}) and Lemma \ref{lem:base change of multiplier ideal} (1), $\mathcal{J}\left(X_x,\Delta_x,\left(\mathfrak{a}_{\bullet,\ell!}\right)^t_x\right)\ne \mathcal{O}_{X,x}$. Therefore, Theorem \ref{thrm:sup and asymptotic} gives us the second inequality in (\ref{eqn:enlarged ideal for lct 5}).

\medskip

\noindent\textbf{Step 3.}
For simplicity, we denote $\lambda_{\ell}\coloneqq \lct\left(X_x,\Delta_x,\left(\mathfrak{a}_{\bullet,\ell}\right)_x\right)$ and $\lambda'_{\ell}\coloneqq \lct^{\mathfrak{m}^n_x}\left(X_x,\Delta_x,\left(\mathfrak{a}_{\bullet,\ell}\right)_x\right)$. Since
$$ \lambda'_{\ell}=\lct^{\mathfrak{m}^n_x}\left(X_x,\Delta_x,\left(\mathfrak{a}_{\bullet,\ell!}\right)_x\right)=\inf\left\{\lambda\Bigm|\mathfrak{m}^n_x\nsubseteq\mathcal{J}\left(X_x,\Delta_x,\left(\mathfrak{a}_{\bullet,\ell!}\right)^{\lambda}_x\right)\right\}$$
holds by Proposition \ref{thrm:sup and asymptotic}, we see that by applying (\ref{eqn:enlarged ideal for lct 0}) and Lemma \ref{lem:base change of multiplier ideal} (1), we obtain
$$ \lambda'_{\ell}\ge (1+\varepsilon_0)\cdot\lct_{\sigma}\left(X,\Delta,D+\frac{1}{\ell!}A\right).$$
Moreover, (\ref{eqn:enlarged ideal for lct 5}) implies that
$$ \lambda_{\ell}\le (1+\varepsilon'_0)\cdot\lct_{\sigma}\left(X,\Delta,D+\frac{1}{\ell!}A\right).$$
Therefore, since $\varepsilon'_0\le \frac{\varepsilon_0}{2}$ holds, we obtain
$$ \ceil{\frac{n}{\lambda'_{\ell}-\lambda_{\ell}}}+1\le \ceil{\frac{2n}{\varepsilon_0\cdot\lct_{\sigma}\left(X,\Delta,D+\frac{1}{\ell!}A\right)}}+1.$$
By Lemma \ref{lem:limit}, there exists a positive integer $\ell''''_0$ such that
$$ \ceil{\frac{2n}{\varepsilon_0\cdot\lct_{\sigma}\left(X,\Delta,D+\frac{1}{\ell!}A\right)}}+1=\ceil{\frac{2n}{\varepsilon_0\cdot\lct_{\sigma}\left(X,\Delta,D\right)}}+1$$
holds for any $\ell\ge \ell''''_0$. Set $\ell_0\coloneqq \max\{\ell'_0,\ell''''_0\}$. Then for any $\ell\ge \ell_0$, we have
\begin{equation} \label{eqn:enlarged ideal for lct 9}
\ceil{\frac{n}{\lambda'_{\ell}-\lambda_{\ell}}}+1\le \ceil{\frac{2n}{\varepsilon_0\cdot\lct_{\sigma}\left(X,\Delta,D\right)}}+1.
\end{equation}

\medskip

\noindent\textbf{Step 4.} Let $n$ be a positive integer such that $\mathfrak{m}^n_x\subseteq \mathcal{I}_x$ holds, and let $\ell\ge \ell_0$. By (\ref{eqn:enlarged ideal for lct 9}), we have that
$$
\ceil{\frac{n}{\lambda'_{\ell}-\lambda_{\ell}}}+1 \leq p
$$
where we denote $p\coloneqq \ceil{\frac{2n}{\varepsilon_0\cdot\lct_{\sigma}\left(X,\Delta,D\right)}}+1$.
Let us define
$$ \mathfrak{c}_{m,\ell}\coloneqq \sum^m_{i=0}\mathfrak{a}_{i,\ell}\cdot \mathfrak{m}^{p(m-i)}_x.$$
We remark that (2) in Proposition \ref{prop:enlarged ideal for lct} follows from the definition of $\mathfrak{c}_{1,\ell}$.

Fix an $\ell\ge \ell_0$. Let $0<\varepsilon'<1$ be a positive number such that $p>\frac{n}{(1-\varepsilon')\lambda'_{\ell}-\lambda_{\ell}}$. Such a real number $\varepsilon'$ exists by the definition of $p$. Note that
\begin{equation} \label{eqn:enlarged ideal for lct 7}
\nu(\mathfrak{c}_{\bullet,\ell!})=\min\{\nu(\mathfrak{a}_{\bullet,\ell!}),p\nu(\mathfrak{m}_x)\}
\end{equation}
for all $\nu\in \mathrm{Val}^*_X$. Thus,
$$ \lct\left(X_x,\Delta_x,\left(\mathfrak{c}_{\bullet,\ell !}\right)_x\right)\le\inf_{\nu\in V_{\varepsilon'}}\frac{A_{X,\Delta}(\nu)}{\min\{\nu(\mathfrak{a}_{\bullet,\ell!}),p\nu(\mathfrak{m}_{x})\}},$$
where $V_{\varepsilon'}\coloneqq \left\{\nu\in \mathrm{Val}^*_{X,x}\Bigm| \frac{A_{X,\Delta}(\nu)}{\nu(\mathfrak{a}_{\bullet,\ell!})}\le \frac{\lambda_{\ell}}{1-\varepsilon'}\right\}$. Furthermore, following inequalities hold:
$$
\frac{n\nu(\mathfrak{m}_{x})}{\nu(\mathfrak{a}_{\bullet,\ell!})}\ge \lambda'_{\ell}-\frac{A_{X,\Delta}(\nu)}{\nu(\mathfrak{a}_{\bullet,\ell!})}\ge \lambda'_{\ell}-\frac{\lambda_{\ell}}{1-\varepsilon'}
$$
for any $\nu\in V_{\varepsilon'}$. Therefore, we have
$$
\frac{1}{\lct(X,\Delta,\mathfrak{c}_{\bullet,\ell!})}\ge \sup_{\nu\in V_{\varepsilon'}}\frac{\nu(\mathfrak{a}_{\bullet,\ell!})}{A_{X,\Delta}(\nu)}\min\left\{1,\frac{p}{n}\left(\lambda'_{\ell}-\frac{\lambda_{\ell}}{1-\varepsilon'}\right)\right\}
=\sup_{\nu\in V_{\varepsilon'}}\frac{\nu(\mathfrak{a}_{\bullet,\ell!})}{A_{X,\Delta}(\nu)}
=\frac{1}{\lambda_{\ell}}
$$
which implies
\begin{equation} \label{eqn:enlarged ideal for lct 8}
\lambda_{\ell}\ge \lct\left(X_x,\Delta_x,\left(\mathfrak{c}_{\bullet,\ell!}\right)_x\right)
\end{equation}

Thus, we obtain that
$$
\begin{aligned}
\lct(X,\Delta,\mathfrak{a}_{\bullet,\ell!})&\leq \lct\left(X,\Delta,\mathfrak{c}_{\bullet,\ell!}\right)
\\ &\leq\lct\left(X_x,\Delta_x,\left(\mathfrak{c}_{\bullet,\ell !}\right)_x\right)
\\ &\leq \lct(X_x,\Delta_x,(\mathfrak{a}_{\bullet,\ell!})_x)
\\ &\leq(1+\varepsilon'_0)\cdot\lct(X,\Delta,\mathfrak{a}_{\bullet,\ell!}),
\end{aligned}
$$
where the first inequality follows from $\mathfrak{a}_m\subseteq \mathfrak{c}_m$, and the second inequality follows from Theorem \ref{thrm:sup and asymptotic} and Lemma \ref{lem:base change of multiplier ideal} (1). The third and fourth inequalities follow from (\ref{eqn:enlarged ideal for lct 8}) and (\ref{eqn:enlarged ideal for lct 5}), respectively. The above inequalities give us Proposition \ref{prop:enlarged ideal for lct} (1). \end{proof}

\section{Proofs of main results}\label{sect:proofs}
We prove our main results in this section.  Let us start proving Theorem \ref{thrm:1}.

\begin{proof}[Proof of Theorem \ref{thrm:1}]
The proof consists of four steps. In the first step, we construct a graded sequence of ideals $\mathfrak{c}_{\bullet,\ell_j}$ which approximates the log canonical threshold $\lct_\sigma(X,\Delta,D)$.
In the second step, we find a convergent sequence of valuations $\{\nu_j\}$, each of which computes $\lct_\sigma(X,\Delta, \mathfrak{c}_{\bullet,\ell_j})$. In the latter two steps, we prove that the limit $\nu_0$ of the convergent sequence computes $\lct_{\sigma}(X,\Delta,D)$. By showing that there exists a quasi-monomial valuation $\omega$ which also computes $\lct_{\sigma}(X,\Delta,D)$, we finish the proof. 

We will consider only the case where $\lct_{\sigma}(X,\Delta,D)<\infty$. Indeed, if $\lct_{\sigma}(X,\Delta,D)=\infty$, then any $\omega\in \mathrm{Val}_X$ computes $\lct_{\sigma}(X,\Delta,D)$.

\smallskip

\noindent\textbf{Step 1.} We aim to construct a sequence of valuations $\nu'_{j}$ in $\textrm{Val}_X$ such that all the centers $c_X(\nu'_j)$ are some fixed point of $X$ and each $\nu'_j$ computes the log canonical thresholds $\lct_\sigma(X,\Delta,\mathfrak{a}_{\bullet,\ell_j})$ of some subsequence of graded ideals $\{\mathfrak{a}_{m,j}\}$ which we define now. For an ample divisor $A$ on $X$, define $\mathfrak{a}_{m,\ell}$ as in the proof of Proposition \ref{prop:enlarged ideal for lct}, by
$$\mathfrak{a}_{m,\ell}\coloneqq \begin{cases}
    \mathfrak{b}\left(\left|mD+\frac{m}{\ell}A\right|\right) & \text{if }mD+\frac{m}{\ell}A\text{ is Cartier and }H^0\left(X,mD+\frac{m}{\ell}A\right)\ne 0, \\
    \phantom{.........}0 & \text{otherwise.}
\end{cases}$$
Then, by Proposition \ref{prop:enlarged ideal for lct}, there exist a positive number $\varepsilon_0>0$, a positive integer $p$, and a graded sequence of ideals $\mathfrak{c}_{\bullet,\ell}$ in $\mathcal{O}_X$ such that for any $0<\varepsilon'_0<\frac{\varepsilon_0}{2}$, there is an increasing sequence $\{\ell_j=\ell_j(\varepsilon'_0)\}_{j\ge 0}$ that satisfies the following properties: let $x$ be the generic point of an irreducible component of $\mathcal{Z}(\mathcal{J}_{\sigma}(X,\Delta,\lct_{\sigma}(X,\Delta,D)\|D\|))$. Then
\begin{enumerate}[leftmargin=8mm]
\item[(i)] we have
$$ \lct_{\sigma}\left(X,\Delta,D+\frac{1}{\ell_j}A\right)\le \lct\left(X_x,\Delta_x,\left(\mathfrak{c}_{\bullet,\ell_j}\right)_x\right)\le (1+\varepsilon'_0)\cdot\lct_{\sigma}\left(X,\Delta,D+\frac{1}{\ell_j} A\right),$$
and
\item[(ii)] there is a positive integer $p$ (which is independent of $\varepsilon'_0$ and $j$) such that $\mathfrak{m}^p_x\subseteq \mathfrak{c}_{1,\ell_j}$.
\end{enumerate}

Note that by (ii), $\mathfrak{m}^{mp}_x\subseteq \mathfrak{c}_{m,\ell_j}$ for any sufficiently divisible positive integer $m$. Therefore,
$$ \nu(\mathfrak{m}_x)\ge \frac{1}{mp}\nu(\mathfrak{c}_{m,\ell_j})$$
which implies that for any $j$ and any $\nu\in \mathrm{Val}_{X,x}$, the following inequality holds:
\begin{equation} \label{eqn:lowerbound}
\nu(\mathfrak{m}_x)\ge \frac{1}{p}\nu(\mathfrak{c}_{\bullet,\ell_j}).
\end{equation}

Let $\delta>0$ be a positive number such that $\lct\left(X_x,\Delta_x,\left(\mathfrak{c}_{\bullet,\ell_0}\right)_x\right)<\frac{1}{p\delta}$.
Fix a subset $W\coloneqq \left\{\nu\in \mathrm{Val}_{X,x}\mid \nu(\mathfrak{m}_x)\ge \delta\text{ and }A_{X,\Delta}(\nu)\le 1 \right\}$.
By the condition on $\delta$, it is enough to consider the valuations $\nu$ such that $\frac{A_{X,\Delta}(\nu)}{\nu(\mathfrak{c}_{\bullet.\ell_j})}<\frac{1}{p\delta}$ to compute $\lct\left(X_x,\Delta_x,\left(\mathfrak{c}_{\bullet,\ell_0}\right)_x\right)$. Note that we may also assume that $A_{X,\Delta}(\nu)\leq 1$. Then by (\ref{eqn:lowerbound}), we have
$$
\nu(\mathfrak{m}_x)\ge \frac{1}{p}\nu\left(\mathfrak{c}_{\bullet,\ell_j}\right)\ge \delta.
$$
Thus, to compute $\lct\left(X_x,\Delta_x,\left(\mathfrak{c}_{\bullet,\ell_j}\right)_x\right)$, it is enough to consider the valuations in $W$. Therefore, we have the following equality
\begin{align*}
\lct\left(X_x,\Delta_x,\left(\mathfrak{c}_{\bullet,\ell_j}\right)_x\right)=\inf_{\nu\in W}\frac{A_{X,\Delta}(\nu)}{\nu\left(\mathfrak{c}_{\bullet,\ell_j}\right)}.
\end{align*}

Now, we define a function
$$ \phi_{j}(\nu)\coloneqq \frac{A_{X,\Delta}(\nu)}{\nu\left(\mathfrak{c}_{\bullet,\ell_j}\right)}$$
for any $\nu\in W$. As $A_{X,\Delta}(-)\colon W\to \R$ is lower semi-continuous by \cite[Section 7.2]{Blu} and $\nu(\mathfrak{c}_{\bullet,\ell_j})$ is upper semi-continuous by \cite[Lemma 6.1]{JM}, the function $\phi_{j}$ is lower semi-continuous. Therefore, there exists a valuation $\nu'_j\in W$ which computes $\lct\left(X_x,\Delta_x,\left(\mathfrak{c}_{\bullet,\ell_j}\right)_x\right).$
Note that we may assume $A_{X,\Delta}(\nu'_{j})=1$ by rescaling $\nu'$.
\medskip

\noindent\textbf{Step 2.} The goal of this step is to construct a convergent subsequence $\{\nu_{ij}\}$ of the sequence $\{\nu'_{j}\}$.

By Remark \ref{rmk:xu main result}, there exist quasi-monomial valuations $\nu_{j}\in \mathrm{Val}_X$ which compute $\lct\left(X_x,\Delta_x,\left(\mathfrak{c}_{\bullet,\ell_j}\right)_x\right)$ and satisfy the following
\begin{itemize}
\item[(1)] $A_{X,\Delta}(\nu_{j})=A_{X,\Delta}(\nu'_{j})=1$
\item[(2)] $\nu_j\ge \nu'_j$,
\item[(3)] $c_X(\nu_{j})=x$, and
\item[(4)] $\nu_{j}=\lim\limits_{i\to \infty}\frac{1}{A_{X,\Delta}(S_{ij})}\mathrm{ord}_{S_{ij}},$ where $S_{ij}$ is a Koll\'{a}r component over $(X_{x},\Delta_{x})$.
\end{itemize}

We see that $\nu_j\in W$ by (1), (2), and (3). Since $W$ is sequentially compact (cf. \cite[Proposition 3.9]{LX2}), after passing to a subsequence of $\{\nu_j\}_{j\ge 1}$, the sequence $\{\nu_j\}_{j\ge 1}$ converges to some $\nu_0\in W$.

If we denote
$$ \nu_{ij}\coloneqq \frac{1}{A_{X,\Delta}(S_{ij})}\mathrm{ord}_{S_{ij}},$$
then $\lim\limits_{i\to\infty}\nu_{ij}(\mathfrak{m}_x)= \nu_j(\mathfrak{m}_x)\ge \delta$. Therefore, $\nu_{ij}(\mathfrak{m}_x)\ge \frac{\delta}{2}$ for any $i,j$ after passing to subsequences of $\nu_{ij}$ for all $j$.

\smallskip

\noindent\textbf{Step 3.} This step is devoted to proving that $\nu_0$ computes $\lct_{\sigma}(X,\Delta,D)$. This is an application of Proposition \ref{prop:continuity}. We closely follow the proof of \cite[Theorem 3.3, Proposition 3.5]{Xu2}.

Let $f_{ij}\colon Y_{ij}\to X_x$ be the plt blowup which extracts $S_{ij}$. By \cite[Theorem 1.8]{B1}, there exists a positive integer $N_0$ such that for each $i$ and $j$, there is an effective $\Q$-Weil divisor $\Delta_{ij}$ such that $(X_x,\Delta_x+\Delta_{ij})$ is a log canonical pair, where $S_{ij}$ is a log canonical place and $N_0(K_{X_x}+\Delta_x+\Delta_{ij})$ is Cartier. Let $N_1\coloneqq rN_0$ for some positive integer $r$ such that $r(K_{X_x}+\Delta_x)$ is Cartier. Then both $N_1(K_{X_x}+\Delta_x)$ and $N_1\Delta_{ij}$ are Cartier divisors. We can assume that $N_1\Delta_{ij}$ is given by $\div (\psi_{ij})$ for some regular function $\psi_{ij}$.

Let $M$ be a positive integer such that $M>\frac{2N_1}{\delta}$. Let $g_1,\cdots,g_r$ be $r$-elements in $\mathcal{O}_{X,x}$ such that the reductions
$$ [g_1],\cdots,[g_r]\in \mathcal{O}_{X,x}/\mathfrak{m}^M_{x}$$
yield a $\mathbb{C}$-basis. For any $i,j$, there exists a $\mathbb{C}$-linear combination $h_{ij}$ of $g_1,\cdots,g_r$ such that the images of $\psi_{ij}$ and $h_{ij}$ are the same in $\mathcal{O}_{X,x}/\mathfrak{m}^M_{x}$.

By \cite[Proof of Claim 3.6]{Xu2}, we see that for $\Phi_{ij}\coloneqq \div (h_{ij})$, $\left(X_x,\Delta_x+\frac{1}{N_1}\Phi_{ij}\right)$ is a strictly log canonical pair and has $S_{ij}$ as its log canonical place. Thus, by applying \cite[Lemma 2.12]{Xu2} to the family of Cartier divisors $D'_U\subseteq X\times U$, where
$$ U\coloneqq \{(x_1,\cdots,x_r)\in \mathbb{A}^r_{\mathbb{C}}\mid (x_1,\cdots,x_m)\ne (0,\cdots,0)\}$$
and $D'_U\coloneqq \left(\sum\limits^r_{i=1}x_ig_i\right)$, we see that there exists a family of Cartier divisors $D'\subseteq X_x\times V$ parametrized by a variety $V$ of finite type, such that for any $u\in V$, $\left(X_x,\Delta_x+\frac{1}{N_1}D'_u\right)$ is a strictly log canonical pair and for any $i,j$, $S_{ij}$ computes the log canonical threshold of a pair $\left(X_x,\Delta_x+\frac{1}{N_1}D'_{u_{ij}}\right)$ for some $u_{ij}\in V$.

By \cite[Definition-Lemma 2.8]{Xu2}, there is an \'{e}tale morphism $V'\to V$ such that
\begin{align*}
    \left(X_x\times V',\Delta_x\times V'+\frac{1}{N_1}D'\right)\to V'
\end{align*}
admits a fiberwise log resolution $f_{V'}\colon Y\to \left(X_x\times V',\Delta_x\times V'+\frac{1}{N_1}D'\right)$ over $V'$ with the exceptional divisor $E=\sum\limits_{i=1}^{L}E_i$. By Lemma \ref{lem:ensuring passing to a subsequence}, possibly replacing $V$ by a closed subvariety of $V$, we may assume that after passing to a subsequence of $j$, there are subsequences of $\{u_{ij}\}_{i\in \Z_{>0}}$ that are in the image of $V'\to V$ for all $j$. Now, replace $V'$ with $V$.

Now, we show that $\nu_0$ computes $\lct_{\sigma}(X,\Delta,D)$. Let $K\coloneqq \overline{K(V)}$, $E_{i_1},\cdots,E_{i_r}$ be the prime components of $E$ which are log canonical places of $\left(X_x\times V,\Delta_x\times V+\frac{1}{N_1}D'\right)$, and let $E'\coloneqq \sum\limits^r_{a=1}E_{i_a}$. For any fixed $i$ and $j$, $\nu_{ij}\in \mathcal{D}(E'_{u_{ij}})$, and thus, there is a corresponding element $\omega'_{ij}\in \mathcal{D}(E')$. Let $\omega_{ij}\in \mathcal{D}(E'_K)$ be the restrictions of $\omega'_{ij}$. Since $\mathcal{D}(E'_K)$ is compact, after passing to a subsequence of $j$, and then subsequences of $\omega_{ij}$ for all $j$, we see that $\omega_{ij}\to \omega_j$, and $\omega_j\to \omega_0$ for some $\omega_j,\omega_0\in \mathcal{D}(E'_K)$. For any $f\in \mathcal{O}_{X,x}$, \cite[Lemma 3.7]{Xu2} implies that
\begin{align*}
\nu_{j}(f)=\lim_{i\to \infty}\nu_{ij}(f)=\lim_{i\to \infty}\omega_{ij}(1\otimes f)=\omega_{j}(1\otimes f).
\end{align*}
Therefore, $\nu_{j}$ is the restriction of $\omega_{j}$. Moreover,
\begin{align*}
\nu_0(f)=\lim_{j\to \infty}\nu_j(f)=\lim_{j\to \infty}\omega_j(1\otimes f)=\omega_0(1\otimes f),
\end{align*}
and hence, $\nu_0$ is the restriction of $\omega_0$.

Let us claim $c_{(X_x)_{K}}(\omega_{ij})=x_{K}$. It suffices to prove $\omega_{ij}(\mathfrak{m}_{x_{K}})\ge \frac{\delta}{2}$. For any $f\in \mathfrak{m}_{K}$, there are $a_1,\cdots,a_L\in K$ and $f_1,\cdots,f_L\in \mathfrak{m}_x$ such that
$$ f=\sum^L_{i=1}a_i\otimes f_i.$$
Therefore,
$$\omega_{ij}(f)\ge \min_{i=1,\cdots,L}\nu_{ij}(f_i)\ge \nu_{ij}(\mathfrak{m}_x)\ge \frac{\delta}{2}.$$
Hence $\omega_{ij}(\mathfrak{m}_{K})\ge \frac{\delta}{2}$ as claimed. Therefore, by taking $i\to \infty$, $\omega_j(\mathfrak{m}_{x_{K}})\ge \frac{\delta}{2}$. Similarly, we have $\omega_0(\mathfrak{m}_{K})\ge \frac{\delta}{2}$, and thus $c_{(X_x)_{K}}(\omega_0)=x_{K}$.

Furthermore, for any $\varepsilon>0$, there exists a positive integer  $j_0$ such that for $j>j_0$, we have the following inequalities
\begin{align*}
    \frac{A_{X,\Delta}(\nu)}{\sigma_{\nu}\left(D+\frac{1}{\ell_j}A\right)}&\overset{(1)}{\ge} \lct_{\sigma}\left(X,\Delta,D+\frac{1}{\ell_j}A\right)
     \\
     & \overset{(2)}{\ge} \frac{1}{1+\varepsilon'_0}\cdot\lct\left(X_x,\Delta_x,\left(\mathfrak{c}_{\bullet,\ell_j}\right)_x\right)
     \\ & \overset{(3)}{=} \frac{1}{1+\varepsilon'_0}\cdot\frac{1}{\nu_j(\mathfrak{c}_{\bullet,\ell_j})}     \\
      &\overset{(4)}{\ge} \frac{1}{1+\varepsilon'_0}\cdot\frac{A_{X,\Delta}(\nu_0)}{\nu_j(\mathfrak{c}_{\bullet,\ell_j})}
\\
&\overset{(5)}{\ge} \frac{1}{1+\varepsilon'_0}\cdot\frac{A_{X,\Delta}(\nu_0)}{\sigma_{\nu_j}\left(D+\frac{1}{\ell_j}A\right)}
    \\ &\overset{(6)}{\ge} \frac{1}{1+\varepsilon'_0}\cdot\frac{A_{X,\Delta}(\nu_0)}{\sigma_{\nu_j}\left(D\right)}
    \\ &\overset{(7)}{=} \frac{1}{1+\varepsilon'_0}\cdot\frac{A_{X,\Delta}(\nu_0)}{\sigma_{\omega_j}\left(D_K\right)}
    \\ &\overset{(8)}{\ge} \frac{1}{1+\varepsilon'_0}\cdot\frac{A_{X,\Delta}(\nu_0)}{\sigma_{\omega_0}\left(D_K\right)}-\varepsilon
    \\ &\overset{(9)}{=}\frac{1}{1+\varepsilon'_0}\cdot\frac{A_{X,\Delta}(\nu_0)}{\sigma_{\nu_0}(D)}-\varepsilon,
\end{align*}

\noindent where (1) follows from the definition of log canonical threshold, (2) is due to Proposition \ref{prop:enlarged ideal for lct} (1) ((i) in Step 1), (3) holds since $\nu_j$ computes $\lct\left(X_x,\Delta_x,\left(\mathfrak{c}_{\bullet,\ell_j}\right)_x\right)$ (Step 2) and $A_{X,\Delta}(\nu_j)=1$ ((1) in Step 2), (4) follows from the lower semi-continuity of $A_{X,\Delta}(-)$ (cf. \cite[Section 7.2]{Blu}) and
$$ A_{X,\Delta}(\nu_0)\le \liminf_{j\to \infty}A_{X,\Delta}(\nu_j)=1,$$
(5) follows from (\ref{eqn:enlarged ideal for lct 7}) and Corollary \ref{cor:extension}, (6) is due to $\sigma_{\nu}(D)\ge \sigma_{\nu}(D+A')$ for any $\nu\in \mathrm{Val}_X$ and any ample $\Q$-divisor $A'$ on $X$, (7) holds since $\nu_j$ is the restriction of $\omega_j$, (8) follows from Proposition \ref{prop:continuity}, and (9) follows from Corollary \ref{cor:extension}. Finally, by letting $j\to \infty$, $\varepsilon'_0\to 0$, and $\varepsilon\to 0$ successively, we see that $\nu_{0}$ computes $\lct_{\sigma}(X,\Delta,D)$.

\medskip

\noindent\textbf{Step 4}. Finally, we finish the proof of Theorem \ref{thrm:1} by proving that there is a quasi-monomial valuation $\omega$ which computes $\lct_{\sigma}(X,\Delta,D)$. This step is similar to the proof of \cite[Lemma 4.7]{BJ}.

Let us begin by proving that $\nu_0$ computes $\lct(X,\Delta,\mathfrak{a}_{\bullet}(\nu_0))$.
We first claim the following inequality
$$
\frac{A_{X,\Delta}(\nu_{0})}{\nu_0(\mathfrak{a}_{\bullet}(\nu_0))}\le \frac{A_{X,\Delta}(\nu)}{\nu(\mathfrak{a}_{\bullet}(\nu_0))}
$$
for any $\nu\in\Val_X$.
By scaling, it suffices to check the case $\nu(\mathfrak{a}_{\bullet}(\nu_0))=1$.
In this case, $\nu(\mathfrak{a}_p(\nu_0))\ge p$ for any nonnegative integer $p$. Therefore, we have $\nu\ge \nu_0$ and in turn, we obtain the inequality $\sigma_{\nu}(D)\ge \sigma_{\nu_0}(D)$. Since $\nu_0$ computes $\lct_{\sigma}(X,\Delta,D)$, we obtain
$$ \frac{A_{X,\Delta}(\nu_{0})}{\sigma_{\nu_0}(D)}\le \frac{A_{X,\Delta}(\nu)}{\sigma_{\nu}(D)}$$
and $A_{X,\Delta}(\nu_{0})\le A_{X,\Delta}(\nu)$ for any $\nu\in \mathrm{Val}_X$ with $\nu(\mathfrak{a}_{\bullet}(\nu_0))=1$. Note also that $\nu_0(\mathfrak{a}_{\bullet}(\nu_0))=1$. Therefore, we obtain the following inequality
$$
\frac{A_{X,\Delta}(\nu_{0})}{\nu_0(\mathfrak{a}_{\bullet}(\nu_0))}=A_{X,\Delta}(\nu_{0})\le A_{X,\Delta}(\nu)=\frac{A_{X,\Delta}(\nu)}{\nu(\mathfrak{a}_{\bullet}(\nu_0))}
$$
which implies the claim.

By Remark \ref{rmk:xu main result}, there is a quasi-monomial valuation $\omega\ge \nu_0$ such that $A_{X,\Delta}(\omega)=A_{X,\Delta}(\nu_{0})$. Moreover, by the inequality $\sigma_{\omega}(D)\ge \sigma_{\nu_0}(D)$, we have
$$ \frac{A_{X,\Delta}(\omega)}{\sigma_{\omega}(D)}=\frac{A_{X,\Delta}(\nu_{0})}{\sigma_{\omega}(D)}\le \frac{A_{X,\Delta}(\nu_{0})}{\sigma_{\nu_0}(D)}=\lct_{\sigma}(X,\Delta,D).$$
By the definition of log canonical threshold, we conclude that $\omega$ also computes $\lct_{\sigma}(X,\Delta,D)$.
\end{proof}

The following lemma is used in Step 3 in the proof of Theorem \ref{thrm:1}.

\begin{lemma}\label{lem:ensuring passing to a subsequence}
Let $X$ be a variety and suppose that $\{u_{ij}\in X\mid i,j\in\mathbb Z_{>0}\}$ is a set of closed points of $X$. Then there exists an irreducible closed subset $Z\subseteq X$ satisfying the following property:
for any nonempty open subset $U\subsetneq Z$, there exists an infinite set $J\subseteq \Z_{>0}$ such that $\{i\mid u_{ij}\in U \}$ is infinite for each $j\in J$.

\end{lemma}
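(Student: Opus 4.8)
The plan is to isolate a minimal irreducible closed subset enjoying a suitable density property and to argue by Noetherian induction. Since $X$ is a variety, it is irreducible and Noetherian, so the descending chain condition holds for its closed subsets. Call an irreducible closed subset $Z\subseteq X$ \emph{admissible} if there is an infinite subset $J\subseteq\Z_{>0}$ such that $\{i\mid u_{ij}\in Z\}$ is infinite for every $j\in J$. Then $X$ itself is admissible (take $J=\Z_{>0}$), so among all admissible closed subsets I would choose a minimal one $Z$, and I claim it satisfies the conclusion.

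So let $U\subsetneq Z$ be a nonempty open subset, set $W\coloneqq Z\setminus U$ (a proper closed subset of $Z$, hence closed in $X$), and let $J_U\coloneqq\{j\in\Z_{>0}\mid \{i\mid u_{ij}\in U\}\text{ is infinite}\}$. The asserted conclusion is precisely that $J_U$ is infinite, so suppose for contradiction that $J_U$ is finite. Fix an infinite set $J$ witnessing the admissibility of $Z$. Then $J\setminus J_U$ is still infinite, and for each $j\in J\setminus J_U$ infinitely many $i$ satisfy $u_{ij}\in Z$ while only finitely many satisfy $u_{ij}\in U$; since $Z=U\sqcup W$, infinitely many $i$ satisfy $u_{ij}\in W$.

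Now write $W=W_1\cup\cdots\cup W_k$ as the union of its finitely many irreducible components and run a double pigeonhole. For each $j\in J\setminus J_U$ there is an index $m(j)$ with $\{i\mid u_{ij}\in W_{m(j)}\}$ infinite; since there are only finitely many components while $J\setminus J_U$ is infinite, there is a fixed component $W_m$ with $m(j)=m$ for all $j$ in some infinite subset $J'\subseteq J\setminus J_U$. Then $W_m$ is admissible (with witness $J'$), $W_m$ is irreducible and closed in $X$, and $W_m\subseteq W\subsetneq Z$ because $U\neq\emptyset$. This contradicts the minimality of $Z$, and therefore $J_U$ must be infinite, which is exactly what was to be shown.

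The whole argument is elementary once the notion of admissibility is in place; the only point that needs a little care is the two-stage pigeonhole — first selecting, for each relevant $j$, an irreducible component of $W$ that carries infinitely many of the $u_{ij}$, and then extracting a single component that works for infinitely many $j$ simultaneously — together with the verification that this component is a \emph{proper} irreducible closed subset of $Z$, which is where the hypothesis $U\neq\emptyset$ is used. I do not expect any serious obstacle beyond this bookkeeping.
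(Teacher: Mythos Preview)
Your proof is correct and follows essentially the same approach as the paper: both introduce the same notion (what you call ``admissible'' is the paper's property $(\star)$), both reduce via pigeonhole on the irreducible components of $Z\setminus U$, and both conclude by the Noetherian property. The only cosmetic difference is that you invoke the existence of a minimal admissible $Z$ directly, whereas the paper phrases the same Noetherian argument as the impossibility of an infinite strictly descending chain $Z_0\supsetneq Z_1\supsetneq\cdots$ of admissible subsets.
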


\begin{proof}
For a nonempty irreducible closed subset $Z\subseteq X$ and an infinite set $J\subseteq \Z_{>0}$, we say that $(Z,J)$ satisfies the property $(\star)$ if the set
$$ \left\{i\in \Z_{>0}\mid u_{ij}\in Z\right\}$$
is infinite for each $j\in J$. 

Suppose that the statement of the lemma fails. Then we derive a contradiction by constructing a sequence of couples $(Z_n,J_n)$ consisting of nonempty irreducible closed subsets $Z_n$ and some infinite sets $J_n\subseteq\mathbb{Z}_{>0}$ which satisfy the property $(\star)$ and such that $(Z_0,J_0):=(X,\Z_{>0})$ and  $Z_{n+1}\subsetneq Z_n$:
$$ \cdots\subsetneq Z_{n+1}\subsetneq Z_n\subsetneq \cdots \subsetneq Z_1\subsetneq Z_0=X$$
By the Noetherian property, such an infinite sequence would give us a contradiction.

Now suppose that a couple $(Z,J)$ satisfies the property $(\star)$. We claim that there exists a couple $(Z',J')$ with $Z'\subsetneq Z$ satisfying $(\star)$.
By the assumption that the statement of the lemma fails, there exists a nonempty open subset $U\subseteq Z$ such that for any infinite subset $J'\subseteq \Z_{>0}$, the set $\{i\mid u_{ij}\in U\}$ is finite for some $j\in J'$.

We can check that there are only finitely many $j\in J$ such that $\{i\mid u_{ij}\in Z\setminus U\}$ is finite.
Otherwise, there are infinitely many $j\in J$ for which $\{i\mid u_{ij}\in U\}$ is infinite, and by taking $J'$ as the set of all such $j$, we obtain a contradiction to our assumption.

Note that the set
$$ J''':=\left\{j\in J\mid u_{ij}\in Z\setminus U\text{ for infinitely many }i\right\}.$$
is an infinite set by what we have proved. Let $Z'_1,\cdots,Z'_k$ be the irreducible components of $Z\setminus U$. For each $j\in J'''$, $\{i\in \Z_{>0}\mid u_{ij}\in Z(j)\}$
is an infinite set where $Z(j)$ is one of the irreducible components $Z'_1,\cdots,Z'_k$. Thus we see that for some component $Z'_l$ and an infinite set $C(Z,J)\subseteq J'''$,
$$ \left\{i \mid u_{ij}\in Z'_l\right\}$$
is an infinite set for any $j\in C(Z,J)$. Let $B(Z,J):=Z'_{l}$. Then we note that
\begin{itemize}[leftmargin=8mm]
\item $(B(Z,J),C(Z,J))$ is a couple satisfying the property $(\star)$, and
\item $B(Z,J)\subsetneq Z$.
\end{itemize}

Let $(Z_0,J_0):=(X,\Z_{>0})$ and define $(Z_{n+1},J_{n+1}):=(B(Z_n,J_n),C(Z_n,J_n))$ inductively for nonnegative integers $n$. This defines a strictly decreasing sequence of irreducible subvarieties:
$$ \cdots\subsetneq Z_{n+1}\subsetneq Z_n\subsetneq \cdots \subsetneq Z_1\subsetneq Z_0=X,$$
which leads to a contradiction and completes the proof as explained above.
\end{proof}

For a pklt triple $(X,\Delta,D)$, Proposition \ref{prop:perturbation} shows that $(X,\Delta,(1+\varepsilon)D)$ is also pklt.

\begin{proposition} \label{prop:perturbation}
Let $(X,\Delta,D)$ be a pklt triple. Then for sufficiently small $\varepsilon>0$, $(X,\Delta,(1+\varepsilon)D)$ is pklt.
\end{proposition}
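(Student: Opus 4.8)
The plan is to reduce to Lemma \ref{lem:pklt to plc}: since $(X,\Delta,D)$ is already pklt, it is enough to exhibit one $\varepsilon>0$ for which $(X,\Delta,(1+\varepsilon)D)$ is plc, as the lemma then promotes this to pklt for every $0\le\varepsilon_0<\varepsilon$, and in particular for all sufficiently small $\varepsilon_0>0$. Unwinding definitions, $(X,\Delta,(1+\varepsilon)D)$ is plc exactly when $A_{X,\Delta}(E)\ge(1+\varepsilon)\sigma_E(D)$ for every prime divisor $E$ over $X$; divisors with $\sigma_E(D)=0$ cause no trouble (pklt forces $(X,\Delta)$ to be klt, so $A_{X,\Delta}(E)>0$), so the real content is the strict inequality $\lct_\sigma(X,\Delta,D)>1$, after which any $0<\varepsilon\le\lct_\sigma(X,\Delta,D)-1$ works. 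The case $\lct_\sigma(X,\Delta,D)=\infty$ is immediate: then $\sigma_E(D)=0$ for every prime divisor $E$, so $(X,\Delta,(1+\varepsilon)D)$ has the same potential log discrepancies as $(X,\Delta,D)$ and stays pklt. So assume $\lct_\sigma(X,\Delta,D)<\infty$.

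To prove $\lct_\sigma(X,\Delta,D)>1$, I would apply Theorem \ref{thrm:1} to obtain a quasi-monomial valuation $\omega$ with $\lct_\sigma(X,\Delta,D)=A_{X,\Delta}(\omega)/\sigma_\omega(D)$, normalized so that $A_{X,\Delta}(\omega)=1$; it then suffices to bound $\sigma_\omega(D)$ strictly below $1$. The extra input comes from the construction in the proof of Theorem \ref{thrm:1}, where $\omega$ is realized (over a field extension $\C\subseteq K$) as a limit, inside a single dual complex, of the normalized divisorial valuations $\nu_{ij}:=\tfrac{1}{A_{X,\Delta}(S_{ij})}\ord_{S_{ij}}$ associated to Koll\'ar components $S_{ij}$ of $(X_x,\Delta_x)$. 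Crucially, those $S_{ij}$ are log canonical places of $N_0$-complements for a fixed $N_0$ (via \cite[Theorem 1.8]{B1}) and therefore appear as reductions of finitely many exceptional divisors of a fixed fiberwise log resolution; since log discrepancies are constant along the fibers of a fiberwise log resolution, there is a uniform bound $A_{X,\Delta}(S_{ij})\le M$. Combining this with the pklt hypothesis, $A_{X,\Delta}(S_{ij})-\sigma_{S_{ij}}(D)\ge\alpha$ where $\alpha:=\inf_E\{A_{X,\Delta}(E)-\sigma_E(D)\}>0$, we get
\[
\sigma_{\nu_{ij}}(D)=\frac{\sigma_{S_{ij}}(D)}{A_{X,\Delta}(S_{ij})}=1-\frac{A_{X,\Delta}(S_{ij})-\sigma_{S_{ij}}(D)}{A_{X,\Delta}(S_{ij})}\le 1-\frac{\alpha}{M}.
\]
Passing to the limit with the continuity of $\nu\mapsto\sigma_\nu(D)$ on the dual complex (Proposition \ref{prop:continuity}) and the base-change invariance of the asymptotic valuation (Corollary \ref{cor:extension}) yields $\sigma_\omega(D)\le 1-\tfrac{\alpha}{M}<1$, hence $\lct_\sigma(X,\Delta,D)=1/\sigma_\omega(D)\ge\tfrac{M}{M-\alpha}>1$.

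Writing $\lct_\sigma(X,\Delta,D)=1+\varepsilon$ with $\varepsilon>0$, the triple $(X,\Delta,(1+\varepsilon)D)$ is plc: for a prime divisor $E$ with $\sigma_E(D)>0$ one has $A_{X,\Delta}(E)/\sigma_E(D)\ge\lct_\sigma(X,\Delta,D)=1+\varepsilon$, and the case $\sigma_E(D)=0$ is trivial. Lemma \ref{lem:pklt to plc} then gives that $(X,\Delta,(1+\varepsilon_0)D)$ is pklt for all $0\le\varepsilon_0<\varepsilon$, finishing the proof. The one delicate point is precisely the strictness of $\lct_\sigma(X,\Delta,D)>1$: approximating $\omega$ by arbitrary divisorial valuations only gives $\lct_\sigma(X,\Delta,D)\ge 1$, since the uniform gap $\alpha$ would be divided by log discrepancies that can blow up; what saves the argument is the uniform bound $A_{X,\Delta}(S_{ij})\le M$ for the specific Koll\'ar components supplied by the proof of Theorem \ref{thrm:1}, which ultimately rests on boundedness of complements.
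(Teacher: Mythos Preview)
Your overall strategy---reduce to $\lct_\sigma(X,\Delta,D)>1$ and then invoke Lemma~\ref{lem:pklt to plc}---matches the paper's, and the paper likewise passes through Theorem~\ref{thrm:1} to get a quasi-monomial $\nu$ computing $\lct_\sigma$. The gap is in how you obtain the strict inequality.

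Your key claim, that the Koll\'ar components $S_{ij}$ ``appear as reductions of finitely many exceptional divisors of a fixed fiberwise log resolution'' and hence satisfy a uniform bound $A_{X,\Delta}(S_{ij})\le M$, is not justified. What the proof of Theorem~\ref{thrm:1} establishes (via \cite{dFKX}) is that the \emph{normalized} valuations $\nu_{ij}=\tfrac{1}{A_{X,\Delta}(S_{ij})}\ord_{S_{ij}}$ lie in the dual complex $\mathcal{D}(E'_{u_{ij}})$; but a divisorial valuation in a dual complex need not be a vertex---it can be any rational point of a cell. Concretely, if $\nu_{ij}$ sits at barycentric coordinates $(\alpha_1,\dots,\alpha_r)$ in a cell with vertices $(E_{i_1})_{u_{ij}},\dots,(E_{i_r})_{u_{ij}}$, then $\ord_{S_{ij}}$ is the quasi-monomial valuation with (possibly large) integer weights proportional to $(\alpha_1 A_{X,\Delta}(E_{i_1}),\dots,\alpha_r A_{X,\Delta}(E_{i_r}))$, and $A_{X,\Delta}(S_{ij})$ is the common clearing denominator times $\sum_a \alpha_a A_{X,\Delta}(E_{i_a})$. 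When the limit $\omega$ is genuinely irrational, the approximating $S_{ij}$ have weights with denominators tending to infinity, so $A_{X,\Delta}(S_{ij})\to\infty$. Your bound $\sigma_{\nu_{ij}}(D)\le 1-\alpha/A_{X,\Delta}(S_{ij})$ then degenerates to $\le 1$ in the limit, and you cannot conclude $\sigma_\omega(D)<1$. (Concavity of $\sigma_{\cdot}(D)$ on the cell does not help either, since it controls the minimum over the simplex, not the maximum.)

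The paper circumvents exactly this difficulty by \emph{not} using the Koll\'ar components from the proof of Theorem~\ref{thrm:1}. Instead it works inside the single cone $\QM_\eta(Y,E)$ containing the quasi-monomial minimizer $\nu$, observes that $\phi(\cdot)=A_{X,\Delta}(\cdot)-\sigma_{\cdot}(D)$ is convex there (hence locally Lipschitz with some constant $C$), and applies a Diophantine approximation \cite[Lemma 2.7]{LX1}: for each $t>0$ there is a divisorial $\omega_t\in\QM_\eta(Y,E)$ with $q\omega_t=\ord_F$ and $\|\nu-\omega_t\|<t/q$. The point is the coupling between the denominator $q$ and the distance: by homogeneity,
\[
\phi(q\nu)\ \ge\ \phi(\ord_F)-q\,C\|\nu-\omega_t\|\ >\ \alpha - Ct\ >\ 0
\]
for $t$ small, so $\phi(\nu)>0$ and hence $\lct_\sigma(X,\Delta,D)=A_{X,\Delta}(\nu)/\sigma_\nu(D)>1$. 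In other words, the Diophantine lemma supplies precisely the control you were hoping to get from a bound on $A_{X,\Delta}(S_{ij})$: it ties the size of the ``denominator'' $q$ to the closeness of the approximation, so that the fixed pklt gap $\alpha$ at the integral point $\ord_F$ survives the rescaling and the Lipschitz transport to $\nu$.
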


\begin{proof}
Let $(Y,E)$ be a log-smooth model over $(X,\Delta)$ such that $\nu\in \QM_{\eta}(Y,E)$, where $E=E_{1}+\cdots+E_{r}$ is a reduced simple normal crossing divisor and $\eta=c_{Y}(\nu)$ is the generic point of a connected component of $E_{1}\cap \cdots \cap E_{i}\neq \emptyset$ for some $i$. Let $\|\cdot\|$ be a metric on $\mathrm{QM}_{\eta}(Y,E)$ that induces the topology on $\mathrm{QM}_{\eta}(Y,E)$.

Let $\phi(\nu)=A_{X,\Delta}(\nu)-\sigma_{\nu}(D)$. Then clearly $\phi$ is homogeneous of degree 1. Note that $A_{X,\Delta}(\nu)$ is linear on $\QM_{\eta}(Y,E)$ and $\sigma_{\nu}(D)$ is concave on $\QM_{\eta}(Y,E)$. Therefore, $\phi$ is convex on $\QM_{\eta}(Y,E)$. Note that we may assume that $\nu$ is in the interior of $\mathrm{QM}_{\eta}(Y,E)$. Since every convex function is locally Lipschitz on an open and convex subset of a Euclidean space, there exist positive real numbers $C$ and $\delta$ such that
$$ |\phi(\nu)-\phi(\omega)|<C\|\nu-\omega\|$$
for all valuations $\omega\in \QM_{\eta}(Y,E)$ with $\|\nu-\omega\|\le \delta$. On the other hand, by \cite[Lemma 2.7]{LX1}, for each $t>0$, there exist divisorial valuation $\omega_{t}\in \QM_{\eta}(Y,E)$ and a positive rational number $q$ such that
\begin{enumerate}[label=$\bullet$]
\item $q\cdot \omega_{t}=\ord_{F}$, where $F$ is a prime divisor over $X$,
\item $\|\nu-\omega_{t}\|<\frac{t}{q}$.
\end{enumerate}

Since $(X,\Delta,D)$ is pklt, there is a positive real number $\varepsilon>0$ such that $A_{X,\Delta}(E)-\sigma_{E}(D)\ge \varepsilon$ for all prime divisors $E$ over $X$. By taking a sufficiently small $t>0$, we have
$$ \phi(q\nu)\ge \phi(q\omega_{t})-|\phi(q\nu)-\phi(q\omega_{t})|>\varepsilon-Ct>0, $$
which implies that $A_{X,\Delta}(\nu)-\sigma_{\nu}(D)>0$. Moreover, we obtain that
$$ \frac{A_{X,\Delta}(E)}{\sigma_{E}(D)}\ge \frac{A_{X,\Delta}(\nu)}{\sigma_{\nu}(D)}>1 $$
for every prime divisor $E$ over $X$. Now, the assertion follows from Lemma \ref{lem:pklt to plc}.
\end{proof}

By Proposition \ref{prop:perturbation}, we give a proof of Theorem \ref{thrm:2}.
\begin{proof}[Proof of Theorem \ref{thrm:2}]
The proof is similar to the proof of \cite[Proposition 4.1]{LP}. If $K_X+\Delta+(1+\varepsilon)D$ is nef for any $0<\varepsilon\ll 1$, then there is nothing further to do.

Let $B$ be a big and effective divisor on $X$, and $\varepsilon_0$ a positive rational number which satisfy the following properties for any $0< \varepsilon<\varepsilon_0$.
	\begin{enumerate}[(1)]
		\item $K_X+\Delta+(1+\varepsilon)D+B$\text{ is nef, and}
		\item for all rational numbers $0<\varepsilon'<1$, there is an effective $\mathbb{Q}$-divisor $\Delta_{\varepsilon,\varepsilon'}$ such that $\Delta_{\varepsilon,\varepsilon'}\sim_{\Q}(1+\varepsilon)D+\varepsilon'B$ and
      $(X,\Delta+\Delta_{\varepsilon,\varepsilon'}+(1-\varepsilon')B)$ is klt.
	\end{enumerate}
Such a divisor $B$ and a positive number $\varepsilon_0>0$ exist. Indeed, let $B$ be a sufficiently ample divisor. By Proposition \ref{prop:perturbation}, $(X,\Delta,(1+\varepsilon_0)D)$
is pklt for some sufficiently small $\varepsilon_0>0$. If $0< \varepsilon<\varepsilon_0$, then by Lemma \ref{lem:pklt to plc} the triple $(X,\Delta,(1+\varepsilon)D+\varepsilon'B)$ is pklt for any $0<\varepsilon'<1$. By Theorem \ref{thm:compl}, the pair $(X,\Delta+\Delta_{\varepsilon,\varepsilon'}+(1-\varepsilon')B)$ is klt.

Let us define $ \lambda_1\coloneqq  \inf\{t\ge 0 \mid K_X+\Delta+(1+\varepsilon)D+tB\text{ is nef}\}$.
By \cite[Lemma 2.2]{KMM}, there is a $(K_X+\Delta+\Delta_{\varepsilon,\varepsilon'})$-negative extremal ray $R\subseteq \overline{\mathrm{NE}}(X)$ such that
$$ (K_X+\Delta+\Delta_{\varepsilon,\varepsilon'}+(\lambda_1-\varepsilon')B)\cdot R=(K_X+\Delta+(1+\varepsilon)D+\lambda_1B)\cdot R=0$$
for $0<\varepsilon'<\lambda_1$. We see that $B\cdot R>0$, and thus $R$ is $(K_X+\Delta+(1+\varepsilon)D)$-negative. By contracting $R$, we obtain either a divisorial contraction or a flip
$$ (X_0,\Delta_0)\coloneqq  (X,\Delta)\overset{f_1}{\dashrightarrow} (X_1,\Delta_1).$$
By defining $\Delta_1\coloneqq f_{1*}\Delta$, $D_1\coloneqq f_{1*}D$, $B_1\coloneqq  \lambda_1 f_{1*}B$ and $\Delta_{\varepsilon,\varepsilon',1}\coloneqq f_{1*}\Delta_{\varepsilon,\frac{\varepsilon'}{\lambda_1}}$, we can repeat the above process, and we complete the proof. Note that $(X_1,\Delta_1,(1+\varepsilon)D_1)$ is pklt by Proposition \ref{prop:pot discr}.
\end{proof}

\begin{proof}[Proof of Corollary \ref{cor:1}]
This is immediate if we let $D=-(K_X+\Delta)$ in Theorem \ref{thrm:2}.
\end{proof}

\begin{proof}[Proof of Corollary \ref{cor:2}]
Let $\rho(X)$ be the Picard number of $X$. Let
$$ (X_0,\Delta_0)\coloneqq (X,\Delta)\overset{f_1}{\dashrightarrow} (X_1,\Delta_1)\overset{f_2}{\dashrightarrow} \cdots$$
be a steps of $-(K_X+\Delta)$-MMP. Note that the birational maps $f_1,f_2,\cdots$ are divisorial contractions, and therefore $\rho(X_i)=\rho(X)-i$. If the MMP does not terminate, then $\rho(X_i)<0$ for some $i$, which is a contradiction.
\end{proof}

\begin{remark}
Note that Corollaries \ref{cor:1} and \ref{cor:2} are valid only for pklt pairs $(X,\Delta)$ with $\Q$-divisors $\Delta$ while the general theories on pklt pairs can be developed with $\R$-divisor $\Delta$ (cf. \cite{CP},\cite{Jan},\cite{CJL}). Our proofs are not applicable to $\R$-divisor case because the proofs rely on the results on the asymptotic multiplier ideals that are only defined for $\Q$-divisors. Furthermore, Step 3 in the proof of Theorem \ref{thrm:1} may not work with $\R$-divisors since we apply the boundedness of complements (\cite[Theorem 1.8]{B1}) on a pair with $\Q$-divisors.
\end{remark}

\end{document}